%
%
%
%
%
%
%
%

\documentclass[a4paper,12pt]{article}     

\usepackage{graphicx}		  
\usepackage{amsmath,amssymb}	


\setlength{\topmargin}{-1cm}
\setlength{\textheight}{230mm}

\newtheorem{lemma}{Lemma}[section]
\newtheorem{theorem}[lemma]{Theorem}
\newtheorem{proposition}[lemma]{Proposition}

\newtheorem{assumption}{Assumption}

\newenvironment{proof}{
\hspace*{-9mm}
{ \it Proof.}}
{\hfill {$\square$}\vspace{1.5em}}

\begin{document}

\begin{center}{
{\Large 
Properties of minimal charts and their applications IV: Loops}
\vspace{10pt}
\\ 
Teruo NAGASE and Akiko SHIMA\footnote
{
The second author is partially supported by Grant-in-Aid for Scientific Research (No.23540107), Ministry of Education, Science and Culture, Japan. 
\\
2010 Mathematics Subject Classification. 
Primary 57Q45; Secondary 57Q35.\\
{\it Key Words and Phrases}. 2-knot, chart, white vertex.
}
}
\end{center}

\begin{abstract}
We investigate minimal charts with loops, a simple closed curve consisting of edges of label $m$ containing exactly one white vertex. We shall show that there does not exist any loop in a minimal chart with exactly seven white vertices in this paper.
\end{abstract}

\setcounter{section}{0}
\section{{\large Introduction}}


Charts are oriented labeled graphs in a disk which represent surface braids (see  \cite{KnottedSurfaces},\cite{BraidBook}, and see Section 2 for the precise definition of charts, see \cite[Chapter 14]{BraidBook} for the definition of surface braids).
In a chart there are three kinds of vertices; white vertices, crossings and black vertices. 
A C-move is a local modification 
of charts in a disk.
The closures of surface braids are embedded closed oriented surfaces in 4-space ${\Bbb R}^4$
 (see \cite[Chapter 23]{BraidBook} for the definition of the closures of surface braids). 
A C-move between two charts induces an ambient isotopy between the closures of the corresponding two surface braids.

We will work in the PL category or smooth category. All submanifolds are assumed to be locally flat.
In \cite{ONS},
we showed that there is no minimal chart with exactly five vertices
 (see Section 2 for the precise definition of minimal charts). 
Hasegawa proved that there exists a minimal chart with exactly
six white vertices \cite{H1}. 
This chart represents the surface braid whose
closure is ambient isotopic to a 2-twist spun trefoil.
In \cite{INS} and \cite{NST},
we investigated minimal charts with exactly four white vertices.

A {\it loop} of label $m$ in a chart is a simple closed curve consisting of edges of label $m$ which contains exactly one white vertex. 

In this paper, 
we investigate properties of minimal charts and
need to prove that
there is no minimal chart with exactly seven white vertices.
In particular we investigate a minimal chart containing a loop which bounds a disk containing three white vertices in its interior (see Lemma~\ref{CorThreeRed}).
We shall show the following theorem:

\begin{theorem}
\label{NoLoop}
There does not exist any loop in a minimal chart with exactly seven white vertices.
\end{theorem}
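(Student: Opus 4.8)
The plan is to argue by contradiction: suppose $\Gamma$ is a minimal chart with exactly seven white vertices containing a loop $\ell$ of label $m$, and let $w_0$ be the unique white vertex on $\ell$. Viewing the ambient disk inside $S^2$, the curve $\ell$ separates $S^2$ into two open disks, and each of the six white vertices other than $w_0$ lies in one of them. Since $6 = k + (6-k)$ with $\min\{k,6-k\}\le 3$, we may choose the closed disk $D$ bounded by $\ell$ so that $\mathrm{int}(D)$ contains exactly $k$ white vertices, where $k\in\{0,1,2,3\}$. Everything then reduces to a case analysis on $k$; in each case the target is to produce a sequence of C-moves that strictly decreases the number of white vertices (contradicting minimality), or to exhibit an outright forbidden local configuration.

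For $k=3$ this is precisely the situation studied in Lemma~\ref{CorThreeRed}, so that case is closed the moment that lemma is in hand. For $k=0$, the disk $D$ carries no white vertex: since a minimal chart has no free edges (and, by minimality, no reducible local configuration), every edge of label $m$ inside $D$ would have to meet $w_0$, so at most the third $m$-edge at $w_0$ can enter $\mathrm{int}(D)$, and it would then be terminal; ruling out (or absorbing) this terminal edge lets one push $\ell$ across $D$ and cancel it, so $\Gamma$ would be C-move equivalent to a chart with six white vertices --- a contradiction. For $k=1$ and $k=2$, one studies the subchart carried by $D$, which contains only $k+1\le 3$ white vertices ($w_0$ together with those in $\mathrm{int}(D)$); using the structure results for loops and for charts with few white vertices from the earlier papers of this series, together with the auxiliary lemmas proved here, one shows that such a subchart already admits a complexity-reducing C-move supported in a neighborhood of $D$, again contradicting minimality of $\Gamma$.

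I expect the genuine difficulty to be concentrated in Lemma~\ref{CorThreeRed}, i.e. the case $k=3$, because there the white vertices are perfectly balanced ($3$ inside $D$, $3$ outside, $w_0$ on $\ell$) and none of the cheap counting shortcuts survive: one must control the label-$m$ and label-$(m\pm1)$ edges inside $D$, enumerate the admissible positions of crossings along them, and then run a long chain of C-moves while verifying at every step that the number of white vertices never rises. I also anticipate two secondary nuisances: first, in the $k=1,2$ cases one cannot quite ignore crossings and hoops lying inside $D$ that carry no white vertex, and must dispose of them explicitly; second, one should check that the opening reduction ``choose $D$ with at most three interior white vertices'' is legitimate, i.e. that passing to $S^2$ and moving the ambient disk does not affect minimality --- this follows from the standard fact that C-moves and the minimality condition are insensitive to the choice of ambient disk among isotopic ones.
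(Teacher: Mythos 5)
Your outline has a genuine gap, and it is located exactly where you declare victory. Lemma~\ref{CorThreeRed} (and likewise Lemma~\ref{LoopTwoVertices}(2) for the two-vertex case) is a \emph{classification} result, not an impossibility result: it concludes that the associated disk contains one of a short list of explicit pseudo charts, all of which are perfectly consistent local configurations. So the cases $k=2$ and $k=3$ are \emph{not} ``closed the moment that lemma is in hand''; no complexity-reducing C-move supported near $D$ exists in general in those cases (if it did, the lemmas would conclude emptiness rather than produce a nonempty list). The contradiction in the paper comes only from the interaction between the inside and the outside of the disk: one first counts white vertices of $\Gamma_{m}$ and $\Gamma_{m\pm 2\varepsilon}$ outside $D$ (Lemma~\ref{Table}, Lemma~\ref{ThreeVertexExt}) to force the loop to lie in a connected component $G$ of $\Gamma_m$ with $w(G)\le 3$, then classifies $G$ as a pair of eyeglasses or skew eyeglasses (Lemma~\ref{TwoThreeLoop}), and then eliminates each of these global configurations separately (Propositions~\ref{NoSolarEclipse}, \ref{NoGlasses}, \ref{NoSkewGlassesType2}, and the eight-step argument for skew eyeglasses of type~1, which occupies all of Section~\ref{s:ProofTheorem}). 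None of that machinery is even gestured at in your plan, and it is the bulk of the proof.

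Two secondary problems. First, Lemmas~\ref{LoopTwoVertices}(2) and \ref{CorThreeRed} apply only to the \emph{associated} disk of the loop (the one not containing the third edge of $\Gamma_m$ at the white vertex), whereas you choose $D$ to be whichever disk has at most three interior white vertices; these need not coincide, so you cannot invoke those lemmas for your chosen $D$ without further argument. Second, your cases $k=0$ and $k=1$ are vacuous by Lemma~\ref{LoopTwoVertices}(1), which already guarantees at least two white vertices in each complementary open disk of a loop in a minimal chart; the ad hoc cancellation argument you sketch for $k=0$ is unnecessary (and as written it is only a hand-wave).
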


Let $\Gamma$ be a chart. If an object consists of some edges of $\Gamma$, arcs in edges of $\Gamma$ and arcs around white vertices,
then the object is called {\it a pseudo chart}.

The paper is organized as follows.
In Section~\ref{s:Prel},
we define charts and minimal charts.
In Section~\ref{s:kAngledDisk},
we give pseudo charts in a disk bounded by a loop in a minimal chart
(Lemma~\ref{LoopTwoVertices} and Lemma~\ref{CorThreeRed}).
And we review a $k$-angled disk, a disk whose boundary consists of edges of label $m$
and contains exactly $k$ white vertices.
In Section~\ref{s:UsefulLemma},
we give useful lemmata
and introduce notations.
In Section~\ref{s:ProofLemma},
we give a proof of Lemma~\ref{CorThreeRed}.
In Section~\ref{s:SolarEclipse},
we discuss about a solar eclipse, two loops with exactly one white vertex.
In Section~\ref{s:Eyeglasses},
we discuss about two kinds of subgraphs containing two loops of label $m$, 
one called
a pair of eyeglasses and
the other called a pair of skew eyeglasses.
In Section~\ref{s:TriangleLemma},
we prove Triangle Lemma(Lemma~\ref{LemmaTriangle}) for a 3-angled disk.
In Section~\ref{s:ProofTheorem},
we prove Theorem~\ref{NoLoop}.

\section{{\large Preliminaries}}
\label{s:Prel}

Let $n$ be a positive integer. An {\it $n$-chart} is an oriented labeled graph in a disk,
which may be empty or have closed edges without vertices, called {\it hoops},
satisfying the following four conditions:
\begin{enumerate}
	\item[(i)] Every vertex has degree $1$, $4$, or $6$.
	\item[(ii)] The labels of edges are in $\{1,2,\dots,n-1\}$.
	\item[(iii)] In a small neighborhood of each vertex of degree $6$,
	there are six short arcs, three consecutive arcs are oriented inward and the other three are outward, and these six are labeled $i$ and $i+1$ alternately for some $i$,
	where the orientation and the label of each arc are inherited from the edge containing the arc.
	\item[(iv)] For each vertex of degree $4$, diagonal edges have the same label and are oriented coherently, and the labels $i$ and $j$ of the diagonals satisfy $|i-j|>1$.
\end{enumerate}
We call a vertex of degree $1$ a {\it black vertex,} a vertex of degree $4$ a {\it crossing}, and a vertex of degree $6$ a {\it white vertex} respectively (see Fig.~\ref{fig01}).
Among six short arcs
in a small neighborhood of
a white vertex,
a central arc of each three consecutive arcs
oriented inward or outward 
is called 
a {\it middle arc} at the white vertex
(see Fig.~\ref{fig01}(c)).
There are two middle arcs
in a small neighborhood of
each white vertex.

\begin{figure}[t]
\centerline{\includegraphics{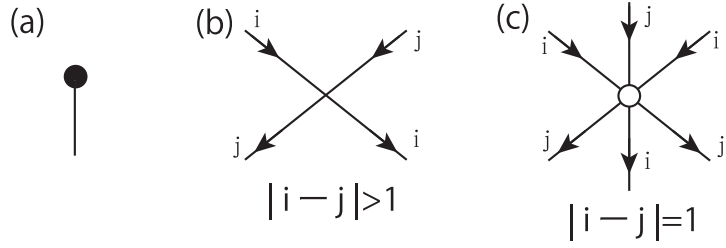}}
\vspace{5mm}
\caption{\label{fig01}(a) A black vertex. (b) A crossing. (c) A white vertex.}
\end{figure}

Now {\it $C$-moves} are local modifications 
of charts in a disk
as shown in 
Fig.~\ref{fig02}
(see \cite{KnottedSurfaces}, 
\cite{BraidBook}, \cite{Tanaka} 
for the precise definition).
These C-moves as shown in 
Fig.~\ref{fig02} 
are examples of C-moves.


\begin{figure}[t]
\centerline{\includegraphics{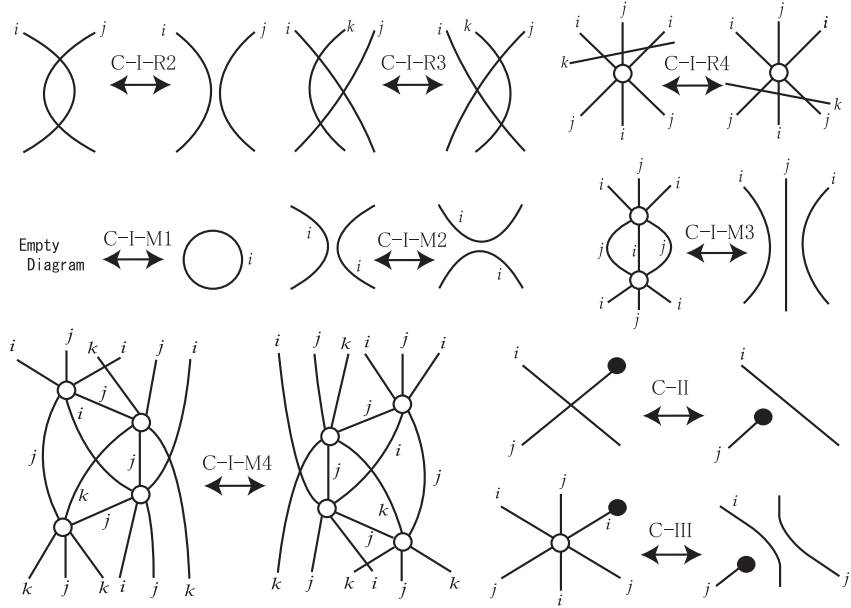}}
\vspace{5mm}
\caption{\label{fig02}For the C-III move, the edge containing the black vertex does not contain a middle arc in the left figure.}
\end{figure}

Let $\Gamma$ be a chart. For each label $m$, we denote by $\Gamma_m$ the 'subgraph' of $\Gamma$ consisting of all the edges of label $m$ and their vertices.
An edge of $\Gamma$ is the closure of a connected component of the set obtained by taking out all white vertices and crossings from $\Gamma$.
On the other hand, we assume that 
\begin{enumerate}
	\item[] an {\it edge} of $\Gamma_m$ is the closure of a connected component of the set obtained by taking out all white vertices from $\Gamma_m$.
\end{enumerate}
Thus 
any vertex of $\Gamma_m$ is a black vertex or a white vertex.
Hence any crossing of $\Gamma$ is not considered as a vertex of $\Gamma_m$.

Let $\Gamma$ be a chart, $m$ a label of $\Gamma$, and $e$ an edge of $\Gamma$ or $\Gamma_m$.
The edge $e$ is called 
a {\it free edge}
if it has
two black vertices.
The edge $e$ is called 
a {\it terminal edge}
if it has
a white vertex and a black vertex.
The edge $e$ is a {\it loop} 
if it is a closed edge with only one white vertex.
Note that 
free edges, terminal edges, and loops 
may contain crossings of $\Gamma$.

Two charts are said to be {\it C-move equivalent} 
if there exists
a finite sequence of C-moves 
which modifies one of the two charts 
to the other.

For each chart $\Gamma$,
let $w(\Gamma)$ and $f(\Gamma)$ be the number of white vertices, and the number of free edges respectively.
The pair $(w(\Gamma), -f(\Gamma))$ is called a {\it complexity} of the chart (see \cite{BraidThree}).
A chart $\Gamma$ is called a {\it minimal chart} if its complexity is minimal among the charts C-move equivalent to the chart $\Gamma$ with respect to the lexicographic order of pairs of integers.

We showed the difference of a chart in a disk and in a 2-sphere (see \cite[Lemma 2.1]{ChartAppl}).
This lemma follows from that there exists a natural one-to-one correspondence between $\{$charts in $S^2\}/$C-moves and $\{$charts in $D^2\}/$C-moves, conjugations
(\cite[Chapter 23 and Chapter 25]{BraidBook}).
To make the argument simple, we assume that 
the charts lie on the 2-sphere instead of the disk.
\begin{assumption}
In this paper,
all charts are contained in the $2$-sphere $S^2$.
\end{assumption}
We have the special point in the 2-sphere $S^2$, called the point at infinity,
 denoted by $\infty$.
In this paper, all charts are contained in a disk such that the disk 
does not contain the point at infinity $\infty$.


Let $\Gamma$ be a chart,
and $m$ a label of $\Gamma$. 
A {\it hoop} is a closed edge of $\Gamma$ without vertices 
(hence without crossings, neither).
A {\it ring} is a closed edge of $\Gamma_m$ containing a crossing but not containing any white vertices.
A hoop is said to be {\it simple} 
if one of the two complementary domains
of the hoop
does not contain any white vertices.

We can assume that
all minimal charts $\Gamma$
satisfy the following five conditions 
(see \cite{ChartAppl},\cite{ChartAppII},\cite{ChartAppIII}):

\begin{assumption}
\label{NoTerminal}
No terminal edge of $\Gamma_m$
contains a crossing.
Hence 
any terminal edge of $\Gamma_m$
contains a middle arc.
\end{assumption}

\begin{assumption}
 No free edge of $\Gamma_m$ contains a crossing. Hence any free edge
of $\Gamma_m$ is a free edge of $\Gamma$.
\end{assumption}

\begin{assumption}
\label{NoSimpleHoop}
All free edges and simple hoops in $\Gamma$ 
are moved into a small neighborhood $U_\infty$ 
of the point at infinity $\infty$. 
Hence
we assume that 
$\Gamma$ does not contain free edges
nor simple hoops, 
otherwise mentioned. 
\end{assumption}

\begin{assumption}
\label{AssumptionRing}
Each complementary domain of
any ring and hoop must contain 
at least one white vertex. 
\end{assumption}

\begin{assumption}
The point at infinity $\infty$ is moved in any complementary domain of $\Gamma$.
\end{assumption}

In this paper
for a set $X$ in a space
we denote 
the interior of $X$,
the boundary of $X$ and
the closure of $X$
by Int$X$, $\partial X$
and $Cl(X)$
respectively.


\section{{\large $k$-angled disks}}
\label{s:kAngledDisk}

Let $\ell$ be a loop of label $m$ in a chart $\Gamma$. 
Let $e$ be the edge of $\Gamma_m$ containing the white vertex in $\ell$ with $e\not=\ell$. 
Then the loop $\ell$ bounds two disks on the 2-sphere. 
One of the two disks does not contain the edge $e$. 
The disk is called {\it the associated disk of the loop $\ell$} 
(see Fig.~\ref{fig03}).


\begin{figure}[t]
\centerline{\includegraphics{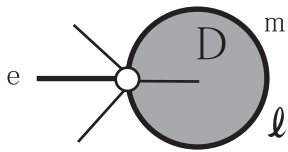}}
\vspace{5mm}
\caption{\label{fig03}The gray region is the associated disk of a loop $\ell$.}
\end{figure}

Let $\Gamma$ be a chart, $m$ a label of $\Gamma$, $D$ a disk, and $k$ a positive integer.
If $\partial D$ consists of $k$ edges of  $\Gamma_m$, 
then $D$ is called {\it a $k$-angled disk of $\Gamma_m$}. Note that the boundary $\partial D$ may contain crossings, and
each of two disks bounded by a loop of label $m$ is a $1$-angled disk of $\Gamma_m$.

Let $\Gamma $ and $\Gamma^\prime $ be C-move equivalent charts. 
Suppose that a pseudo chart $X$ of $\Gamma$ is also a pseudo chart of $\Gamma^\prime$. 
Then we say that 
$\Gamma$ is modified to $\Gamma^\prime$ by {\it C-moves keeping $X$ fixed}.
In Fig.~\ref{fig04},
we give examples of C-moves keeping pseudo charts  fixed.

\begin{figure}[t]
\centerline{\includegraphics{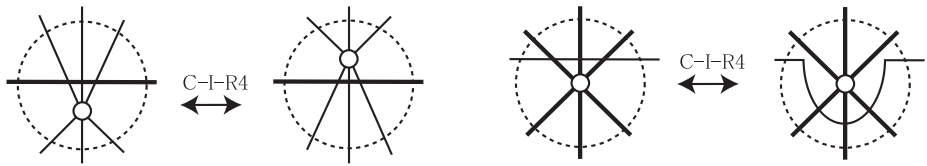}}
\vspace{5mm}
\caption{\label{fig04} C-moves keeping thicken figures fixed.}
\end{figure}

Let $\Gamma$ be a chart, and $m$ a label of $\Gamma$.
Let $D$ be a $k$-angled disk of $\Gamma_m$, 
and 
$G$ a pseudo chart in $D$ with $\partial D\subset G$.
Let $r:D\to D$ be a reflection of $D$, and $G^*$ the pseudo chart obtained from $G$ by changing the orientations of all of the edges.
Then the set $\{G,G^*, r(G), r(G^*)\}$ 
is called the {\it RO-family of the pseudo chart $G$}.

{\it Warning.}
To draw a pseudo chart in a RO-family,
we draw a part of $\Gamma\cap N$
here $N$ is a regular neighborhood of a $k$-angled disk $D$.
For example,
we draw the pseudo chart in Fig.~\ref{fig05}(a)
for a pseudo chart in a RO-family in Fig.~\ref{fig05}(b).

Let $X$ be a set in a chart $\Gamma$.
Let
 $$w(X)=\text{the number of white vertices in $X$,}$$
$$c(X)=\text{the number of crossings in }X.$$

\begin{lemma}
\label{LoopTwoVertices}
{\em $($\cite[Lemma 4.2 and Lemma 8.1]{ChartAppII}$)$}
Let $\Gamma$ be a minimal chart with a loop $\ell^*$ of label $m$ with the associated disk $D^*$.
Let $\varepsilon$ be the integer in $\{+1,-1\}$ such that the white vertex in $\ell^*$ is in $\Gamma_{m+\varepsilon}$.
Then we have the following:
\begin{enumerate}
\item[$(1)$] $w(\Gamma\cap${\rm Int}$D^*)\ge2$ and $w(\Gamma\cap(S^2-D^*))\ge2$.
\item[$(2)$] If $w(\Gamma\cap${\rm Int}$D^*)=2$,
then $D^*$ contains a pseudo chart of the RO-family of the pseudo chart as shown in Fig.~\ref{fig05}{\rm(a)}
by C-moves in $D^*$ keeping $\partial D^*$ fixed. 
\end{enumerate}
\end{lemma}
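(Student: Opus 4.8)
The plan is to work locally at the white vertex $v$ of $\ell^*$ and then propagate the resulting constraints into $D^*$ by means of the crossing rule. At $v$ the six short arcs are labeled $m$ and $m+\varepsilon$ alternately by condition (iii); two of the three label-$m$ arcs belong to the loop $\ell^*$ and the third belongs to the edge $e$ used to define $D^*$. A short case check on the cyclic position of $e$ among the label-$m$ arcs shows that exactly one arc of label $m+\varepsilon$ lies in the angular sector at $v$ on the $D^*$ side, so exactly one edge $e_1$ of $\Gamma_{m+\varepsilon}$ leaves $v$ into $\mathrm{Int}\,D^*$. Since $|m-(m+\varepsilon)|=1$, condition (iv) forbids any edge of label $m+\varepsilon$ from crossing the label-$m$ boundary $\ell^*=\partial D^*$; hence $e_1$, together with the whole component of $\Gamma_{m+\varepsilon}$ through $v$ lying inside, is trapped in $Cl(D^*)$ and must terminate at vertices of $\mathrm{Int}\,D^*$.

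First I would prove the weaker bound $w(\Gamma\cap\mathrm{Int}\,D^*)\ge 1$. The edge $e_1$ cannot end at a black vertex, since then $e_1$ would be a terminal edge and, using Assumption~\ref{NoTerminal} together with a complexity-reducing C-move near $v$, one could decrease the complexity of $\Gamma$, contradicting minimality. Thus $e_1$ reaches a white vertex $w_1\in\mathrm{Int}\,D^*$. To bootstrap to $\ge 2$, I would examine the remaining label-$m$ and label-$(m+\varepsilon)$ edges at $w_1$; these are again confined to $Cl(D^*)$ by the crossing rule and cannot escape through $\ell^*$. Were $w_1$ the only interior white vertex, the trapped edges would be forced to form a terminal edge or a free edge (or to enclose a region containing no white vertex, violating Assumption~\ref{AssumptionRing}), each situation removable by a C-move and hence excluded by minimality. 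This produces a second white vertex and gives $w(\Gamma\cap\mathrm{Int}\,D^*)\ge 2$.

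For the outside bound $w(\Gamma\cap(S^2-D^*))\ge 2$ I would argue symmetrically, noting that $Cl(S^2-D^*)$ is itself a $1$-angled disk of $\Gamma_m$. The two label-$(m+\varepsilon)$ arcs at $v$ pointing away from $D^*$ are trapped in $Cl(S^2-D^*)$ by the same crossing rule and must each reach a white vertex; ruling out (again via a minimality-based C-move reduction) the case in which both reach a single common white vertex yields $w(\Gamma\cap(S^2-D^*))\ge 2$.

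For part (2) I would assume $w(\Gamma\cap\mathrm{Int}\,D^*)=2$ and pin down the interior. With exactly two interior white vertices $w_1,w_2$ and the connectivity established above, the subgraph $(\Gamma_m\cup\Gamma_{m+\varepsilon})\cap Cl(D^*)$ can realize only finitely many combinatorial types, each fixed by the orientation and middle-arc data at $v,w_1,w_2$ (Fig.~\ref{fig01}). I would enumerate these types and, for each, exhibit a sequence of C-moves supported in $D^*$ that keeps $\partial D^*$ fixed and carries the picture to the model pseudo chart of Fig.~\ref{fig05}(a), up to the RO-family. I expect this normalization to be the main obstacle: it demands a complete case analysis of the admissible two-white-vertex configurations and an explicit boundary-preserving reduction of each to the single normal form, all while checking that no C-move used would lower the complexity (which minimality forbids). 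The confinement argument and the crossing rule keep the enumeration finite, but verifying that every admissible case collapses to Fig.~\ref{fig05}(a) is the delicate point.
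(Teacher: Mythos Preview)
First, note that this lemma is not proved in the present paper: it is quoted verbatim from \cite[Lemma~4.2 and Lemma~8.1]{ChartAppII}. There is therefore no in-paper argument to compare against; the question is whether your outline stands on its own.

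For the inside bound in part~(1) your opening move is sound: exactly one label-$(m+\varepsilon)$ arc at $v$ points into $D^*$, and a check of the four admissible orientation patterns at $v$ shows this arc is never a middle arc, so Assumption~\ref{NoTerminal} forces $e_1$ to reach a white vertex $w_1\in\mathrm{Int}\,D^*$. Your bootstrap to a second interior white vertex, however, is too loose. If $w_1\in\Gamma_{m+\varepsilon}\cap\Gamma_{m+2\varepsilon}$, the label-$(m+2\varepsilon)$ edges at $w_1$ are \emph{not} confined to $D^*$ (they may cross $\ell^*$, since $|m-(m+2\varepsilon)|=2$), so ``trapped edges would be forced to form a terminal edge or a free edge'' does not apply to them; you must instead analyze the two remaining label-$(m+\varepsilon)$ edges at $w_1$ and explicitly rule out the loop and bigon configurations.

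Your symmetry argument for the outside bound has a real gap. The disk $Cl(S^2-D^*)$ is \emph{not} symmetric to $D^*$: it carries the feeler $e$ of label $m$, and of the two label-$(m+\varepsilon)$ arcs at $v$ pointing outward, exactly one \emph{is} a middle arc (same orientation check as above). Hence its edge may be terminal, and your claim that both ``must each reach a white vertex'' is false as stated. A correct argument has to use the non-middle outward $(m+\varepsilon)$-edge together with $e$ (which is also non-middle at $v$, hence non-terminal), and then genuinely exclude the possibility that both land on a single common white vertex; ``a minimality-based C-move reduction'' is not enough here without saying which C-move and why it lowers complexity.

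For part~(2) you have a plan but no proof. The enumeration and boundary-fixing normalization you describe is exactly the content of \cite[Lemma~8.1]{ChartAppII} and is substantial; nothing in your sketch indicates how the reduction of each admissible two-vertex configuration to the model of Fig.~\ref{fig05}(a) would actually be carried out.
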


\begin{figure}[t]
\centerline{\includegraphics{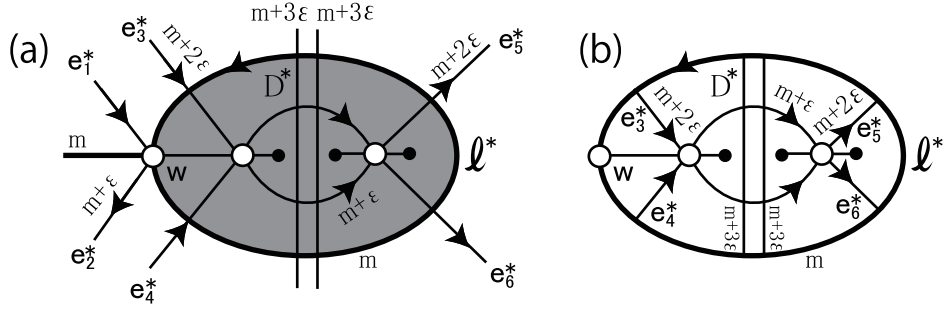}}
\vspace{5mm}
\caption{\label{fig05} The gray region is a disk $D^*$, the thick lines are edges of label $m$, and $\varepsilon\in\{+1,-1\}$}
\end{figure}

Let $\Gamma$ be a minimal chart,
and $m$ a label of $\Gamma$.
Let $D$ be a $k$-angled disk of $\Gamma_m$.
The pair of integers 
$(w(\Gamma\cap$Int$D),c(\Gamma\cap\partial D))$
is called the {\it local complexity 
with respect to $D$}.
Let ${\Bbb S}$ be the set of all
minimal charts obtained from $\Gamma$ by C-moves in a regular neighborhood of
$D$ keeping $\partial D$ fixed.
The chart $\Gamma$ is said to be 
{\it locally minimal
with respect to $D$}
if its local complexity
with respect to $D$
is minimal
among the charts in ${\Bbb S}$
with respect to 
the lexicographic order of pairs of integers.

The following lemma will be proved in Section~\ref{s:ProofLemma}.

\begin{lemma}
\label{CorThreeRed}
Let $\Gamma$ be a minimal chart with a loop $\ell$ of label $k$.
Let $\mu$ be the integer in $\{+1,-1\}$ 
such that the white vertex in $\ell$ is in $\Gamma_{k+\mu}$.
If $\Gamma$ is locally minimal with respect to the associated disk $D$ of $\ell$
and if $w(\Gamma\cap${\rm Int}$D)\le3$, 
then $D$ contains a pseudo chart of the RO-families of the five pseudo charts as shown in Fig.~\ref{fig05}{\rm (a)} and Fig.~\ref{fig06}.
\end{lemma}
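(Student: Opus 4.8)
The plan is a finite local analysis of the associated disk $D$, starting at the unique white vertex of $\ell$ on $\partial D$ and working inward. By Lemma~\ref{LoopTwoVertices}(1) we have $w(\Gamma\cap\mathrm{Int}\,D)\ge 2$, so under the hypothesis $w(\Gamma\cap\mathrm{Int}\,D)\le 3$ only the two cases $w(\Gamma\cap\mathrm{Int}\,D)=2$ and $w(\Gamma\cap\mathrm{Int}\,D)=3$ arise. In the first case nothing new is needed: Lemma~\ref{LoopTwoVertices}(2) says that after C-moves in $D$ keeping $\partial D$ fixed, $D$ contains a pseudo chart of the RO-family of the pseudo chart of Fig.~\ref{fig05}(a), which is one of the five listed. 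So from now on I would assume $w(\Gamma\cap\mathrm{Int}\,D)=3$ and write $w_1,w_2,w_3$ for the three white vertices in $\mathrm{Int}\,D$ and $v$ for the white vertex on $\ell$.

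The first genuine step is to pin down the picture at $v$ on the $D$-side. Of the six short arcs at $v$ (see Fig.~\ref{fig01}(c)), two belong to $\ell$ and one to the edge $e$ of $\Gamma_k$ through $v$ with $e\ne\ell$; since two distinct edges of $\Gamma_k$ can meet only at a white vertex, $e$ meets $\ell$ only at $v$, and as $D$ is the \emph{associated} disk of $\ell$ the edge $e$ lies outside $D$. Examining the degree-$6$ model at $v$, with three consecutive inward arcs, three consecutive outward arcs, and labels $k$ and $k+\mu$ alternating, and using that the two $\ell$-arcs at $v$ carry the same label $k$ and hence are not cyclically adjacent among the six arcs, one checks that the sector of $D$ at $v$ contains exactly one short arc, that this arc has label $k+\mu$, and that it is not a middle arc at $v$. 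Consequently the edge $e_1$ of $\Gamma_{k+\mu}$ carrying that arc is not a terminal edge (a terminal edge carries a middle arc at its white vertex by Assumption~\ref{NoTerminal}) and is not part of a loop at $v$ inside $D$ (which would require two arcs of $D$ at $v$); hence $e_1$ joins $v$ to one of $w_1,w_2,w_3$, say $w_1$.

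From there I would carry out a bounded case analysis that propagates outward through $w_1$ and then $w_2,w_3$. At each of these white vertices one records the admissible cyclic arrangements of labels and orientations --- the labels occurring inside $D$ stay close to $k$ and $k+\mu$ by the lemmata of Section~\ref{s:UsefulLemma} --- and prunes the list using the degree-$6$ condition, Assumptions~\ref{NoTerminal}--\ref{AssumptionRing} (no terminal or free edge of $\Gamma_{k+\mu}$ meets a crossing, each complementary domain of a ring or a hoop contains a white vertex, and so on), the local minimality of $\Gamma$ with respect to $D$ (which forbids any C-move supported in a regular neighborhood of $D$ and keeping $\partial D$ fixed that lowers the local complexity, cf.\ Fig.~\ref{fig04}), and the structural lemmata of Section~\ref{s:UsefulLemma}. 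Whenever a loop of label $k$ or $k+\mu$ turns up inside $D$ its associated disk then contains at most two white vertices in its interior, so Lemma~\ref{LoopTwoVertices} applies to it again and forces its local picture. Since $\mathrm{Int}\,D$ carries only three white vertices, no free edges, and a monochromatic boundary with no black vertex, only finitely many patterns of edges joining $\{v,w_1,w_2,w_3\}$, possibly through black vertices or through crossings on $\ell$, survive the pruning; normalizing each survivor by C-moves keeping $\partial D$ fixed should leave exactly the four pictures of Fig.~\ref{fig06}, together with the possibility that one of $w_1,w_2,w_3$ contributes nothing to the pattern, which reduces to the picture of Fig.~\ref{fig05}(a). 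The main obstacle is precisely this enumeration: controlling the crossings on $\ell$ and inside $D$, ruling out the many a priori admissible arrangements at $w_1,w_2,w_3$ by exhibiting a complexity-reducing C-move in each dead branch, and verifying that no configuration beyond the five listed pseudo charts can occur.
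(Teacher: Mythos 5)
Your opening reductions are fine and match the paper: Lemma~\ref{LoopTwoVertices} disposes of $w(\Gamma\cap\mathrm{Int}D)=2$, and your analysis of the six arcs at the white vertex of $\ell$ correctly produces a single non-middle arc of label $k+\mu$ on the $D$-side, hence a non-terminal, non-loop edge $e_1$ of $\Gamma_{k+\mu}$ running from that vertex to a white vertex in $\mathrm{Int}D$. But from that point on the proposal is a plan rather than a proof: the entire content of the lemma is the enumeration you defer (``normalizing each survivor \dots should leave exactly the four pictures of Fig.~\ref{fig06}''), and you yourself flag this enumeration as the main obstacle. A vertex-by-vertex pruning of ``admissible cyclic arrangements'' at the three interior white vertices is not how the paper closes this, and I do not see how it could close it: several of the surviving configurations cannot be killed by local considerations at all. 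The paper instead first shows that the component of $\Gamma_{k+\mu}$ through the loop's white vertex must realize one of six specific global shapes inside $D$ (Fig.~\ref{fig14}: a loop of label $k+\mu$, a $2$-angled disk with or without a feeler, a $3$-angled disk, etc.), and then settles each shape by invoking the imported classification theorems for $2$-angled disks (Lemma~\ref{Theorem2AngledDisk}) and special $3$-angled disks (Lemma~\ref{Theorem3AngledDisk}), together with the lens lemma, IO-Calculation, and Lemma~\ref{CorDiskLemma}. None of these appear in your outline, and they are what make the case analysis finite and decidable.

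Two concrete failure points illustrate the gap. First, the configuration of Fig.~\ref{fig14}(c) (a $2$-angled disk of $\Gamma_{k+\mu}$ without feelers, both boundary edges oriented $w_2\to w_3$) is perfectly consistent with every local constraint you list; it is excluded only because the Shifting Lemma (Lemma~\ref{Shift}) lets one push the third interior white vertex out of $D$ by C-moves, contradicting local minimality --- a global move, not a local pruning, and one that requires first locating that vertex relative to the $2$-angled disk via Lemma~\ref{Theorem2AngledDisk} and Lemma~\ref{ComponentTwoWhite}. Second, the labels $k+2\mu$ and $k+3\mu$ appearing in Fig.~\ref{fig06} are not ``close to $k$ and $k+\mu$'' by any lemma of Section~\ref{s:UsefulLemma}; they are forced by the lens lemma (Lemma~\ref{PolandLemma}), which makes certain edges cross the loop $\ell$ of label $k$ and hence have label differing from $k$ by at least $2$, combined with IO-Calculations in $Cl(D-D_1)$ to fix the remaining signs. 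Without supplying these arguments (or equivalents), the proposal does not establish the lemma.
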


\begin{figure}[t]
\centerline{\includegraphics{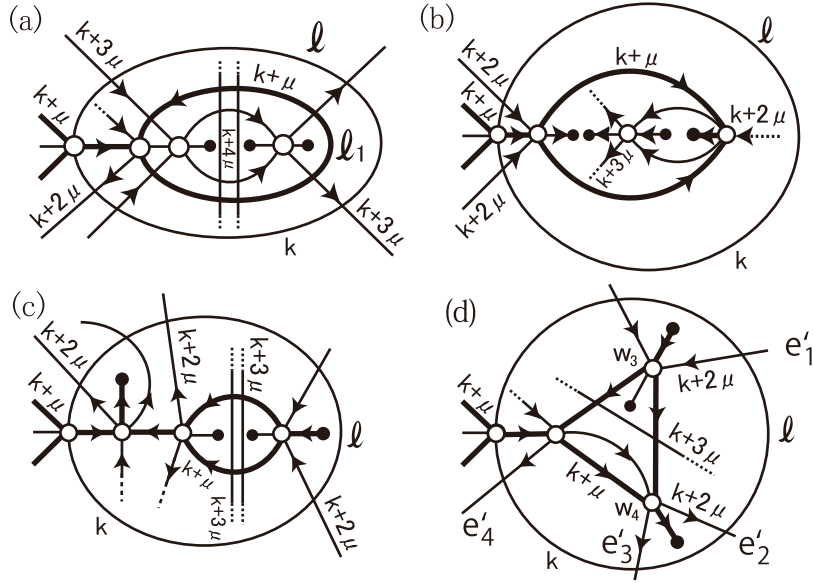}}
\vspace{5mm}
\caption{\label{fig06}The thick lines are edges of label $k+\mu$ where $\mu\in\{+1,-1\}$.}
\end{figure}

Let $\Gamma$ be a chart, and
$m$ a label of $\Gamma$.
An edge of $\Gamma_m$ is called a {\it feeler} of a $k$-angled disk $D$ of $\Gamma_m$
if the edge intersects $N-\partial D$
where $N$ is a regular neighborhood of $\partial D$ in $D$.

The following lemma will be used in the proofs of Theorem~\ref{NoLoop} and Lemma~\ref{CorThreeRed}.

\begin{lemma}
{\em $($\cite[Theorem 1.1]{ChartAppIII}$)$}
\label{Theorem2AngledDisk}
Let $\Gamma$ be a minimal chart.
Let $D$ be a $2$-angled disk of $\Gamma_m$ with at most one feeler
such that $\Gamma$ is locally minimal with respect to $D$.
If $w(\Gamma\cap${\rm Int}$D)\le1$,
then $D$ contains a pseudo chart of the RO-families of the five pseudo charts as shown in Fig.~\ref{fig07}. 
\end{lemma}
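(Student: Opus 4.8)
The plan is to turn the statement into a finite combinatorial classification by analyzing the chart locally at the two corners of $D$ and along $\partial D$, and then to prune the resulting list of possibilities using local minimality. Throughout I would track three pieces of data: the orientations forced by condition (iii) at each white vertex, the number of crossings on $\partial D$ (the second coordinate of the local complexity), and the at most one white vertex allowed in $\mathrm{Int}\,D$. First I would pin down the corners. Since $\partial D$ consists of two edges of $\Gamma_m$ meeting at two points, and a black vertex has degree $1$ while a crossing is not a vertex of $\Gamma_m$, both corners must be white vertices; call them $w_1$ and $w_2$. At $w_i$ the six arcs are labeled $m$ and $m_i$ alternately for some $m_i\in\{m-1,m+1\}$, with three consecutive arcs inward and three outward. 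Exactly two of the three label-$m$ arcs at $w_i$ lie on $\partial D$, and the third is the potential feeler at $w_i$, pointing either into or out of $D$. Condition (iii) rigidly couples the orientations of the two boundary arcs to the position and orientation of this third label-$m$ arc and of the two middle arcs, so for each corner there is only a short list of admissible local pictures, which I would enumerate using also Assumption~\ref{NoTerminal}.

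Next I would bound what can enter $\mathrm{Int}\,D$. A key observation is that no edge of label $m$, $m-1$, or $m+1$ crosses $\partial D$ transversally: two edges of label $m$ can only meet at vertices, and at a crossing lying on $\partial D$ the transverse edge has some label $j$ with $|m-j|>1$, so $j\notin\{m-1,m,m+1\}$. Hence the only edges of those three labels meeting $\mathrm{Int}\,D$ emanate from $w_1$, $w_2$, or from the interior white vertex when present, together with possibly closed edges inside $D$; the latter are excluded for labels $m\pm1$ by Assumption~\ref{NoSimpleHoop} and Assumption~\ref{AssumptionRing} once the interior white-vertex budget is exhausted. Combining this with the hypotheses ``at most one feeler'' and $w(\Gamma\cap\mathrm{Int}\,D)\le1$, an inward/outward orientation count along $\partial D$ shows that the label-$(m\pm1)$ arcs produced at the corners must be absorbed either by one another or by the unique interior white vertex, leaving only finitely many connection patterns to consider.

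I would then run the case analysis on $w(\Gamma\cap\mathrm{Int}\,D)\in\{0,1\}$ and on the feeler count in $\{0,1\}$, further split by the choices $m_i=m\pm1$, determining in each branch the forced edges inside $D$. The decisive tool is local minimality: any branch yielding a configuration that admits a C-move supported near $D$ and fixing $\partial D$ (for example of the type in Fig.~\ref{fig02} or Fig.~\ref{fig04}, a bigon removed by a C-I move, a terminal edge retracted, or a crossing pushed off $\partial D$ by a C-III move) and strictly lowering $(w(\Gamma\cap\mathrm{Int}\,D),c(\Gamma\cap\partial D))$ is discarded. What remains, read up to reflection and orientation reversal, should be exactly the five pseudo charts of Fig.~\ref{fig07}.

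I expect the main obstacle to be the case $w(\Gamma\cap\mathrm{Int}\,D)=1$ together with the local-minimality pruning. Tracking where the three label-$m$ and three label-$(m\pm1)$ edges at the interior white vertex can terminate---at either corner, at a black vertex, or back toward $\partial D$---produces the bulk of the branches, and for each non-listed branch one must exhibit a concrete complexity-reducing C-move that keeps $\partial D$ fixed. This is precisely where the hypotheses ``locally minimal with respect to $D$'' and ``at most one feeler'' are used in full force: the feeler bound caps the label-$m$ edges entering $D$, and local minimality forbids every reducible survivor. Once the reducible branches are eliminated, matching the remaining configurations against Fig.~\ref{fig07} up to RO-families completes the proof.
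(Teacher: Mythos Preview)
The paper does not prove this lemma: it is quoted as Theorem~1.1 of \cite{ChartAppIII} and used as a black box throughout. There is therefore no proof in the present paper to compare your proposal against.

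As a roadmap your outline is reasonable, and the two structural observations you isolate---that both corners of $D$ must be white vertices, and that no edge of label $m$, $m-1$, or $m+1$ can meet $\partial D$ transversally at a crossing---are correct and are indeed the starting point for this kind of classification. However, what you have written is explicitly a plan rather than a proof, and it contains at least one genuine gap. In the case $w(\Gamma\cap\mathrm{Int}\,D)=1$ you speak of ``the three label-$m$ and three label-$(m\pm1)$ edges at the interior white vertex,'' implicitly assuming that this vertex lies in $\Gamma_m\cap\Gamma_{m\pm1}$. That assumption is unwarranted: in the configuration of Fig.~\ref{fig07}(c) (the unique one-feeler case) the interior white vertex lies in $\Gamma_{m+\delta}\cap\Gamma_{m+2\delta}$, as one sees from how this figure is used in Case~(b) of the proof of Lemma~\ref{CorThreeRed}. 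In particular that vertex carries no label-$m$ edges at all, and its label-$(m+2\delta)$ edges may exit $D$ through crossings on $\partial D$. Your absorption claim---that the label-$(m\pm1)$ arcs at the corners ``must be absorbed either by one another or by the unique interior white vertex''---therefore does not cover the actual cases, and the branch structure is richer than your sketch allows for. The exhaustive pruning via explicit complexity-reducing C-moves, which you correctly identify as the main obstacle, is left entirely undone; this is precisely the substantial work carried out in \cite{ChartAppIII}.
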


\begin{figure}[t]
\centerline{\includegraphics{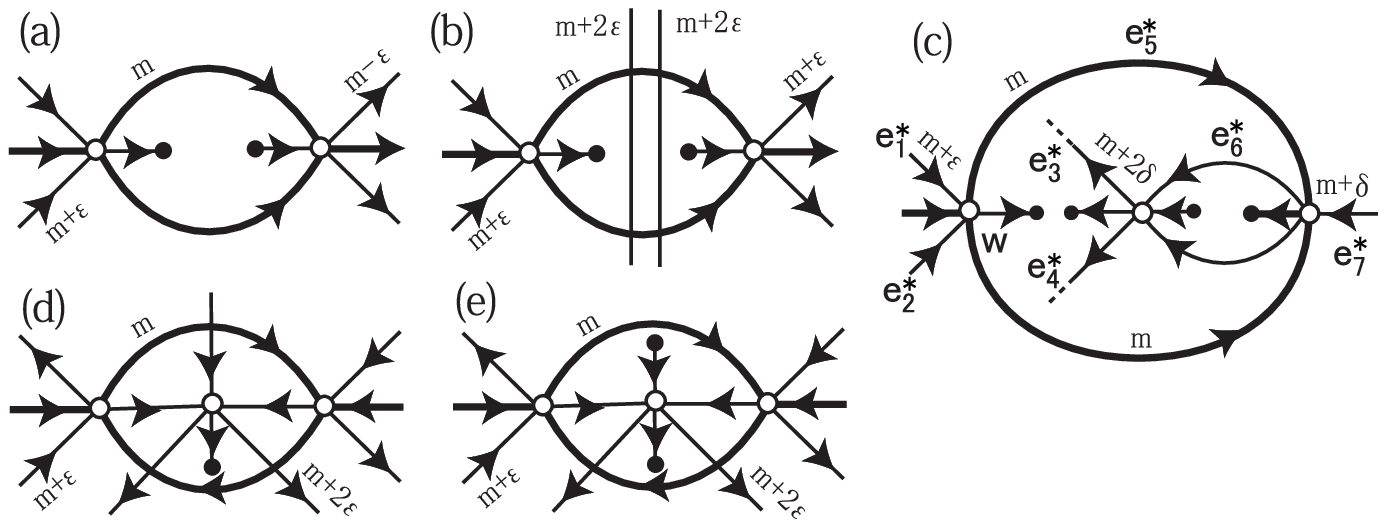}}
\vspace{5mm}
\caption{\label{fig07}
(a), (b), (d), (e) 2-angled disks without feelers. (c) A 2-angled disk with one feeler. 
The thick lines are edges of label $m$,
and $\varepsilon,\delta\in\{+1,-1\}$.}
\end{figure}

Let $\Gamma$ be a chart, 
and $D$ a $k$-angled disk of $\Gamma_m$. 
If any feeler of $D$
 is a terminal edge,
then we say that $D$ is {\it special}.

The following lemma will be used in Case (e) and Case (f) of the proof of Lemma~\ref{CorThreeRed}.

\begin{lemma}
{\em $($\cite[Theorem 1.2]{ChartAppIII}$)$}
\label{Theorem3AngledDisk}
Let $\Gamma$ be a minimal chart.
Let $D$ be a special $3$-angled disk of $\Gamma_m$
such that $\Gamma$ is locally minimal with respect to $D$.
If $w(\Gamma\cap${\rm Int}$D)=0$, 
then $D$ contains a pseudo chart of the RO-families of the two pseudo charts as shown in Fig.~\ref{fig08}. 
\end{lemma}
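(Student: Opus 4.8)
The plan is to exploit one rigid constraint coming from the labels. The boundary $\partial D$ is made of edges of label $m$, and by condition (iv) two edges of labels $i,j$ can cross only when $|i-j|>1$; in particular an edge of label $m\pm1$ can never cross an edge of label $m$. First I would fix the three corner white vertices $v_1,v_2,v_3$ and the three boundary edges $e_1,e_2,e_3$ of label $m$, and read off the six-arc picture at each $v_i$. Two of the three label-$m$ arcs at $v_i$ lie on $\partial D$, so the interior angle of $D$ at $v_i$ is of exactly one of two types: a \emph{convex} corner, at which a single label-$(m\pm1)$ arc and no label-$m$ arc enters $\mathrm{Int}\,D$, or a \emph{reflex} corner, at which two label-$(m\pm1)$ arcs together with one label-$m$ arc (a feeler) enter. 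Since $D$ is special, every such feeler is a terminal edge, and since $w(\Gamma\cap\mathrm{Int}\,D)=0$ it must terminate at a black vertex of $\mathrm{Int}\,D$.

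Next I would pin down the graph formed inside $D$ by the label-$(m\pm1)$ edges. By the no-crossing remark these edges cannot meet $\partial D$, so each label-$(m\pm1)$ arc entering $\mathrm{Int}\,D$ is trapped in $D$. Edges of the same label never cross, so these arcs are pairwise disjoint; and a label-$(m\pm1)$ loop based at a corner is impossible, because its associated disk would lie in $\mathrm{Int}\,D$ and hence contain no white vertex, contradicting Lemma~\ref{LoopTwoVertices}(1). As $\mathrm{Int}\,D$ contains no white vertex, every trapped arc therefore either joins two corners into one edge of $\Gamma_{m\pm1}$ or runs from a corner to a black vertex as a terminal edge. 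Writing $r$ for the number of reflex corners, the number of trapped arc-ends is $3+r$, which, together with the disjointness and the ban on loops, sharply limits the combinatorial possibilities.

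With this skeleton in hand I would clean up the remaining material using local minimality. The only edges allowed to cross $\partial D$ have some label $m''$ with $|m-m''|>1$; any such edge meeting $\mathrm{Int}\,D$ but neither the trapped graph nor a white vertex can be pushed out of $D$ by a C-move keeping $\partial D$ fixed, strictly lowering $c(\Gamma\cap\partial D)$, so by local minimality none survive. When two reflex corners are joined by a double edge of label $m\pm1$, the two edges bound a $2$-angled disk of $\Gamma_{m\pm1}$ with no interior white vertex, whose contents are then determined by Lemma~\ref{Theorem2AngledDisk} and Fig.~\ref{fig07}. Feeding in the orientation and middle-arc conditions at the white vertices (condition (iii) and Assumption~\ref{NoTerminal}) and discarding rings and simple hoops by Assumption~\ref{NoSimpleHoop} and Assumption~\ref{AssumptionRing}, I expect each admissible configuration to collapse, up to reflection and orientation reversal, onto one of the two pseudo charts of Fig.~\ref{fig08}.

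The hard part will be the reflex bookkeeping. A reflex corner injects a terminal $m$-feeler and two trapped $(m\pm1)$-arcs at once, so the trapped graph grows and the subdisks one extracts acquire feelers; keeping their feeler count within the one allowed by Lemma~\ref{Theorem2AngledDisk} is delicate. I would first show that at most one corner can be reflex, since two reflex corners would produce two terminal $m$-feelers whose black vertices, together with the trapped $(m\pm1)$-edges, form a configuration reducible by C-moves against the minimality of the complexity $(w(\Gamma),-f(\Gamma))$ or of the local complexity. Having reduced to $r\le1$, I would finish by treating the all-convex case and the single-reflex case separately and matching the two outcomes to the two figures.
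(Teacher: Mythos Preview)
The paper does not prove this lemma here: it is quoted verbatim from \cite[Theorem 1.2]{ChartAppIII} and used as a black box in Cases (e) and (f) of the proof of Lemma~\ref{CorThreeRed}. So there is no ``paper's own proof'' to compare against in this source; you would have to consult \cite{ChartAppIII} for that.

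That said, your outline has a concrete mismatch with the stated conclusion. You plan to end with two cases, $r=0$ and $r=1$, and match them respectively to the two pseudo charts of Fig.~\ref{fig08}. But the paper's own applications tell you that \emph{both} pseudo charts in Fig.~\ref{fig08} have no feeler: in Case~(e) the authors write that the two pseudo charts ``each of which has no feeler'', and then use this to force $e_3,e_4\subset Cl(D-D_1)$. So the correct endgame is to show $r=0$ outright; the two figures differ in something else (the pattern of labels/orientations at the corners and how the trapped $(m\pm1)$-edge connects two of them), not in whether a reflex corner is present. Your step ``at most one corner can be reflex'' therefore does not finish the job, and the vague appeal to reducing complexity by C-moves is exactly the place where a real argument is needed; with a terminal $m$-feeler in $\mathrm{Int}\,D$ and a convex corner nearby one has to exhibit an explicit C-III/C-I sequence (or an IO-Calculation contradiction) rather than assert reducibility.

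A second soft spot is the cleanup of edges with $|m''-m|>1$. Pushing such an arc across $\partial D$ by a C-move is only immediate when the arc is a $(D,e_i)$-arc for a single boundary edge $e_i$; arcs that enter and leave through different boundary edges, or that link with the trapped $(m\pm1)$-graph, are not removed by a single C-I-R2. The paper handles analogous situations via Disk Lemma (Lemma~\ref{DiskLemma}) applied edge by edge, after first emptying certain boundary edges of crossings (cf.\ the proof of Lemma~\ref{LemmaTriangle}); you will need the same kind of two-stage argument here rather than a one-line local-minimality claim.
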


\begin{figure}[t]
\centerline{\includegraphics{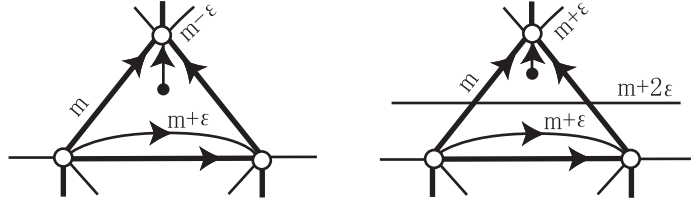}}
\vspace{5mm}
\caption{\label{fig08}The thick lines are edges of label $m$, and $\varepsilon\in\{+1,-1\}$.}
\end{figure}


\section{{\large Useful lemmata}}
\label{s:UsefulLemma}

Let $\Gamma$ be a chart. 
Let $D$ be a disk 
such that 
\begin{enumerate}
\item[(1)] $\partial D$ consists of an edge $e_1$ of $\Gamma_m$ and an edge $e_2$ of $\Gamma_{m+1}$, and 
\item[(2)] any edge containing a white vertex in $e_1$ does not intersect the open disk Int$D$.
\end{enumerate}
Note that $\partial D$ may contain crossings.
Let $w_1$ and $w_2$ be the white vertices in $e_1$. 
If the disk $D$ satisfies one of the following conditions, then $D$ is called  {\it a lens of type $(m,m+1)$}
(see Fig.~\ref{lens}):
\begin{enumerate}
	\item[(i)] Neither $e_1$ nor $e_2$ contains a middle arc. 
	\item[(ii)] One of the two edges $e_1$ and $e_2$ contains middle arcs at both white vertices $w_1$ and $w_2$ simultaneously.
\end{enumerate}

\begin{figure}[t]
\centerline{\includegraphics{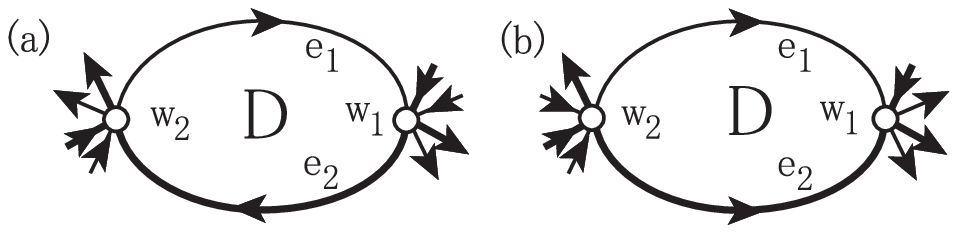}}
\vspace{5mm}
\caption{\label{lens}}
\end{figure}

The following lemma will be used in Case (a), Case (d) and Case (e) of the proof of Lemma~\ref{CorThreeRed}.

\begin{lemma}
{\em $($\cite[Theorem 1.1]{ChartAppl}$)$}
\label{PolandLemma}
Let $\Gamma$ be a minimal chart.
Then there exist at least three white vertices in the interior of any lens.
\end{lemma}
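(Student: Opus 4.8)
Suppose, for contradiction, that $\Gamma$ is a minimal chart carrying a lens $D$ of type $(m,m+1)$, with corners the white vertices $w_1,w_2$, side $e_1\subset\Gamma_m$ and side $e_2\subset\Gamma_{m+1}$, and with $w(\Gamma\cap\mathrm{Int}D)\le 2$. Among all minimal charts carrying such a lens the plan is to fix one for which the pair $(w(\Gamma\cap\mathrm{Int}D),\,c(\Gamma\cap\partial D))$ is lexicographically smallest, so in particular $\Gamma$ is locally minimal with respect to $D$. The preliminary step is to record the picture at the two corners: at each $w_i$ exactly two of the six incident short arcs, namely the arcs of $e_1$ and of $e_2$, lie on $\partial D$, while by condition (2) in the definition of a lens the remaining four incident arcs of labels $m$ and $m+1$ leave $D$; the middle-arc conditions (i) and (ii) then fix, at each corner, the cyclic positions and the orientations of the arcs of $e_1$ and $e_2$ relative to the two middle arcs. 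This is precisely the bookkeeping that makes available the C-move which, once $\mathrm{Int}D$ has been emptied, contracts $D$ and annihilates $w_1$ and $w_2$.

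I would then dispose of the three cases $w(\Gamma\cap\mathrm{Int}D)=0,1,2$ in turn. For $w(\Gamma\cap\mathrm{Int}D)=0$: first, local minimality of $\Gamma$ with respect to $D$ should force that no arc of $\Gamma$ enters $\mathrm{Int}D$ — an innermost such arc would, together with a sub-arc of $e_1$ or $e_2$, bound an empty bigon whose two corners are crossings, which a C-move removes while lowering $c(\Gamma\cap\partial D)$ — and the standing assumptions rule out hoops and rings inside $D$; hence $\Gamma\cap\mathrm{Int}D=\emptyset$. Now $D$ is an empty bigon with white corners, and by the middle-arc condition the contracting C-move (possibly after a C-move clearing crossings from $\partial D$) cancels the pair $w_1,w_2$, lowering $w(\Gamma)$ and contradicting minimality.

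For $w(\Gamma\cap\mathrm{Int}D)=1$, let $w_3$ be the unique interior white vertex. Condition (2) forbids any edge from $w_3$ to $w_1$ or $w_2$, so each of the six arcs at $w_3$ is a loop at $w_3$, a terminal edge, or an arc meeting $\partial D$ transversally at a crossing, the last forcing its label to be far from both $m$ and $m+1$; this leaves only finitely many pseudo charts in $D$. For each I would exhibit a complexity-reducing step: a C-III move removing $w_3$ against a black vertex or against a small loop, or a degeneration of $D$ to an empty bigon handled by the previous case, or the appearance of a free edge or a simple hoop contradicting the standing assumptions. For $w(\Gamma\cap\mathrm{Int}D)=2$, with interior white vertices $w_3,w_4$, I would branch on how $w_3$ and $w_4$ are joined; the decisive observation is that if they are joined by two parallel edges of consecutive labels then those edges bound a strictly smaller lens inside $\mathrm{Int}D$ containing no white vertex, which is impossible by the case just treated, while every other branch collapses to the cases $0$ or $1$, violates a standing assumption, or admits a direct reduction of $(w(\Gamma),-f(\Gamma))$.

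The main obstacle is this last case, $w(\Gamma\cap\mathrm{Int}D)=2$: it requires a long and delicate sub-case analysis that keeps track simultaneously of orientations, of which arcs are middle arcs at $w_3$ and $w_4$, and of arcs of far label crossing $\partial D$, and in each sub-case one must check that the chosen C-move strictly lowers $(w(\Gamma),-f(\Gamma))$ in the lexicographic order rather than merely rearranging $\Gamma$. A second, more technical point is the claim used in the case $w(\Gamma\cap\mathrm{Int}D)=0$ that crossings and through-arcs on $\partial D$ can be removed by C-moves supported near $D$ without disturbing the rest of $\Gamma$; this is exactly what the minimization of $c(\Gamma\cap\partial D)$ is for, and making it airtight is the other place where care is required.
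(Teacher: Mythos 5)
This lemma is not proved in the paper at all: it is imported verbatim as Theorem~1.1 of the first paper in the series (\cite{ChartAppl}), so there is no internal proof to compare against; what can be assessed is whether your argument would actually establish the statement, and it would not. The decisive problem is in your base case $w(\Gamma\cap\mathrm{Int}D)=0$. You claim that once $\mathrm{Int}D$ is emptied, ``by the middle-arc condition the contracting C-move \dots cancels the pair $w_1,w_2$.'' But the middle-arc conditions (i) and (ii) in the definition of a lens are chosen precisely to exclude the cancellable configuration: the bigon created (and hence destroyed) by the white-vertex birth/death move is the one in which $e_1$ carries the middle arc at one corner and $e_2$ carries it at the other corner, and a bigon of that shape is by definition \emph{not} a lens. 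So for a genuine lens of type (i) or (ii) there is no contracting C-move available, and the empty case already requires a substantive argument (in \cite{ChartAppl} it proceeds through orientation and counting constraints at the two corners and the production of a free edge to lower the complexity, not through a direct cancellation). Since your cases $1$ and $2$ are reduced back to, or modelled on, this base case, the error propagates through the whole plan.

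Beyond that, the proposal is a strategy rather than a proof: for $w(\Gamma\cap\mathrm{Int}D)=1$ you assert that ``only finitely many pseudo charts'' remain and that ``for each I would exhibit a complexity-reducing step,'' and for $w(\Gamma\cap\mathrm{Int}D)=2$ you explicitly defer the ``long and delicate sub-case analysis.'' These are exactly the places where the content of the theorem lives, and nothing is supplied there that could be checked. The one genuinely sound structural idea in your sketch --- that two interior white vertices joined by two edges of consecutive labels would bound a smaller lens with empty interior, allowing an induction on $w(\Gamma\cap\mathrm{Int}D)$ --- is worth keeping, but it only covers one branch of the final case. As written, the proposal neither proves the lemma nor correctly identifies the mechanism by which minimality is violated.
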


\begin{lemma}
\label{ComponentTwoWhite}
{\em $($\cite[Lemma 6.1]{ChartAppII}$)$}
Let $\Gamma$ be a minimal chart.
If a connected component $G$ of $\Gamma_m$
contains a white vertex,
then $G$ contains at least two white vertices.
\end{lemma}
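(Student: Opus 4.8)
The plan is to argue by contradiction: suppose some connected component $G$ of $\Gamma_m$ contains exactly one white vertex $w$, and derive a contradiction from the local structure at $w$ together with Assumption~\ref{NoTerminal}. Since $G$ is assumed to contain a white vertex, ruling out the case of exactly one white vertex yields at least two.

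First I would pin down the configuration of the label-$m$ arcs meeting $w$. By condition (iii) in the definition of a chart, the six short arcs around $w$ are labeled alternately by two consecutive integers, so exactly three of them carry the label $m$, and three consecutive arcs are oriented inward while the other three are oriented outward. A short case check on where the inward block sits (according to the parity of its starting position) shows that, among the three label-$m$ arcs at $w$, exactly one is a middle arc and the remaining two are non-middle arcs which share a common orientation opposite to that of the middle arc. Write $\mu$ for the middle label-$m$ arc at $w$, and $\nu_1,\nu_2$ for the two co-oriented non-middle label-$m$ arcs.

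Next I would classify the label-$m$ edges incident to $w$. Each such edge lies in $G$, and its vertices are white or black vertices of $G$; since $w$ is the only white vertex of $G$, an open edge from $w$ cannot end at another white vertex (that vertex would lie in $G$), so every label-$m$ edge incident to $w$ is either a loop (both ends at $w$) or a terminal edge (one end $w$, the other a black vertex). It cannot be a free edge, nor a ring or hoop, since these contain no white vertex. The three arc-ends $\mu,\nu_1,\nu_2$ are then distributed among these edges. Traversing an oriented loop in the direction of its orientation, it leaves $w$ and returns to $w$, so it meets $w$ in one outward and one inward arc; a terminal edge meets $w$ in a single arc, which by Assumption~\ref{NoTerminal} must be a middle arc, hence $\mu$.

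Now comes the contradiction. Because $\nu_1$ and $\nu_2$ are non-middle, neither can be the arc of a terminal edge, so each $\nu_i$ is an end of a loop; since the two ends of a loop at $w$ have opposite orientations and $\mu$ is the only label-$m$ arc oriented oppositely to $\nu_1,\nu_2$, that loop must use $\mu$ as its other end. This forces $\mu$ to be an end of the loop through $\nu_1$ and, at the same time, of the loop through $\nu_2$; as $\nu_1\neq\nu_2$ these are distinct edges, yet a single arc-end $\mu$ can belong to only one edge, which is absurd. Hence $G$ cannot contain exactly one white vertex, so it contains at least two. I expect the only delicate step to be the first one, namely deducing directly from the alternating-label condition and the three-inward/three-outward condition at a white vertex the pattern ``one middle arc, opposite in orientation to two co-oriented non-middle arcs'' for each label; once that is in hand the conclusion is a short parity count, using no C-moves beyond those already absorbed into Assumption~\ref{NoTerminal}.
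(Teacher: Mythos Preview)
The paper does not supply a proof of this lemma: it is quoted verbatim from \cite[Lemma~6.1]{ChartAppII} and used as a black box, so there is no in-paper argument to compare against. Your argument is correct and entirely self-contained within the framework of this paper, using only condition~(iii) at a white vertex together with Assumption~\ref{NoTerminal}; the key parity observation---that among the three label-$m$ arcs at $w$ the unique middle arc is oriented oppositely to the two non-middle arcs---is exactly what makes the terminal/loop bookkeeping collapse, and you have stated it accurately.
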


Let $\alpha$ be an arc,  
and $p,q$ points in $\alpha$. 
We denote by $\alpha[p,q]$ the subarc of $\alpha$ whose end points are $p$ and $q$. 

Let $\Gamma$ be a chart, and $m$ a label of $\Gamma$.
Let $\alpha$ be an arc in an edge of $\Gamma_m$, 
and 
$w$ a white vertex with $w\not\in\alpha$. 
Suppose that there exists an arc $\beta$ in $\Gamma$ such that
its end points are the white vertex $w$ 
and an interior point $p$ of the arc $\alpha$.
Then we say that 
{\it the white vertex $w$ connects with the point $p$ of $\alpha$ by the arc $\beta$}.

The following lemma will be used in Case (c) of the proof of Lemma~\ref{CorThreeRed}.

\begin{lemma}
{\em $($\cite[Lemma 4.2]{ChartAppl}$)$}
$($Shifting Lemma$)$
\label{Shift} 
Let $\Gamma$ be a chart and
$\alpha$ an arc in an edge of $\Gamma_m$.
Let
$w$ be a white vertex of $\Gamma_k \cap\Gamma_{h}$ where $h=k+\varepsilon ,\varepsilon\in\{+1,-1\}$.
Suppose that 
the white vertex $w$ connects with a point $p$ 
of the arc $\alpha$ by an arc in an edge $e$ of $\Gamma_k$. 
Suppose that
one of the following two conditions is satisfied: 
\begin{enumerate}
\item[$(1)$] $h>k>m$.
\item[$(2)$] $h<k<m$.
\end{enumerate}
Then for any neighborhood $V$ of the arc $e[w,p]$
we can shift the white vertex $w$
to $e-e[w,p]$ 
along the edge $e$
by C-I-R2 moves,
C-I-R3 moves and C-I-R4 moves in $V$ 
keeping $\displaystyle 
\bigcup_{i<0}\Gamma_{k+i\varepsilon}
$ 
fixed $($see Fig.~\ref{fig09}$)$.
\end{lemma}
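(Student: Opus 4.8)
The plan is to realize the displacement of $w$ as an explicit, localized sequence of the three named C-I moves inside the prescribed neighborhood $V$, with the two ordering hypotheses used only to put the relevant labels in general position. First I would identify the combinatorial nature of the point $p$. Because $p$ is an interior point of the arc $\alpha$, it is an interior point of an edge of $\Gamma_m$; since an edge of $\Gamma_m$ carries white vertices only at its ends, $p$ is neither a black vertex nor a white vertex. As $p$ also lies on the label-$k$ edge $e$, it can only be a crossing of $\Gamma$ at which $e$ meets a label-$m$ edge transversally. By condition (iv) in the definition of a chart this forces $|k-m|\ge 2$, and in particular $e$ continues beyond $p$, so that $e-e[w,p]$ is nonempty.

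Next I would extract the two bookkeeping facts that the hypotheses supply. In case $(1)$ we have $m<k<h=k+1$, whence $k-m\ge 2$ and $h-m=(k-m)+1\ge 3$; in case $(2)$ we have $h=k-1<k<m$, whence $m-k\ge 2$ and $m-h=(m-k)+1\ge 3$. Thus in both cases each of the six edges meeting $w$, which carry only the labels $k$ and $h$, has label differing from $m$ by at least $2$, so each of them is permitted to cross the label-$m$ edge $\alpha$. Moreover, the subgraph to be held fixed, $\bigcup_{i<0}\Gamma_{k+i\varepsilon}$, equals the union of the $\Gamma_j$ with $j<k$ in case $(1)$ (where $\varepsilon=+1$) and the union of the $\Gamma_j$ with $j>k$ in case $(2)$ (where $\varepsilon=-1$); in both cases it contains $\Gamma_m$, because $m<k$ in case $(1)$ and $m>k$ in case $(2)$. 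Hence $\alpha$ lies in the fixed subgraph, whereas every edge at $w$ has a label lying strictly outside the fixed range. This is precisely the asymmetry that permits the white-vertex gadget to be transported while $\alpha$ is held still.

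With these facts in hand I would carry out the shift within $V$. The essential step is a C-I-R4 move, which is exactly the move that passes a transversal edge across a white vertex and whose hypothesis is met here by $|k-m|\ge 2$ and $|h-m|\ge 2$; applied to $\alpha$ and $w$, it carries the white vertex to the far side of $\alpha$, eliminating the crossing $p$. The auxiliary C-I-R2 and C-I-R3 moves are used to bring the configuration into the standard local form required by the C-I-R4 move and to absorb the incidental crossings and triple points produced as the short stubs of the other five edges at $w$ are swept across the fixed arc $\alpha$. Since every edge that is moved carries the label $k$ or $h$, neither of which lies in the fixed range, and since all the activity stays inside $V$, the resulting sequence keeps $\bigcup_{i<0}\Gamma_{k+i\varepsilon}$ fixed and leaves $w$ in $e-e[w,p]$, as required; should $e[w,p]$ meet further edges before reaching $p$, I would repeat the same routine at each such crossing.

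I expect the main obstacle to be the faithful construction and verification of the intermediate diagrams in the C-I-R4 step, matching Fig.~\ref{fig09}: one must check that, as the six edges at $w$ sweep across the fixed arc $\alpha$, every intermediate picture is a genuine chart---vertex degrees, orientations, and the alternating-label condition at $w$ all respected---and that no forbidden crossing between two edges with labels differing by $1$ is ever forced. My plan is to reduce this verification to the standard local models of the C-I-R2, C-I-R3 and C-I-R4 moves and to certify each local replacement using the label inequalities established above.
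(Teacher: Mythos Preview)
This lemma is quoted from \cite[Lemma~4.2]{ChartAppl} and is not proved in the present paper, so there is no in-paper argument to compare yours against; what follows concerns only the correctness of your sketch.

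Your treatment of the crossing at $p$ itself is correct: since $|k-m|\ge 2$ and $|h-m|\ge 3$, a C-I-R4 move pushing $w$ across $\alpha$ is legitimate there, and only edges of label $k$ or $h$ are displaced. The gap is in your last clause, ``should $e[w,p]$ meet further edges before reaching $p$, I would repeat the same routine at each such crossing.'' An arbitrary interior crossing on $e[w,p]$ is with a transverse arc $e'$ of some label $j$ satisfying $|j-k|\ge 2$, but C-I-R4 also requires $|j-h|\ge 2$, and this fails precisely when $j=h+\varepsilon=k+2\varepsilon$. Such an $e'$ lies outside the fixed subgraph $\bigcup_{i<0}\Gamma_{k+i\varepsilon}$, so one is permitted to move it; however your sketch neither detects this obstruction nor says how to remove it, and your assertion that ``every edge that is moved carries the label $k$ or $h$'' becomes false the moment you try. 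The standard repair is a preprocessing sweep: starting from the crossing nearest $p$ and working back toward $w$, slide each transverse arc on $e[w,p]$ whose label $j$ satisfies $(j-k)\varepsilon>0$ along $e$ and just past $p$ using C-I-R3 (and C-I-R2) moves. At each step the arc being slid meets only transverse arcs whose labels lie on the opposite side of $k$ (label-distance $\ge 4$) and finally the arc $\alpha$ (also at label-distance $\ge 4$), so every C-I-R3 is valid. After this sweep only fixed-label transverse arcs remain on $e[w,p]$; for those one has $|j-k|\ge 2$ and $|j-h|\ge 3$, and then your C-I-R4 routine applies at each of them and finally at $p$.
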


\begin{figure}[t]
\centerline{\includegraphics{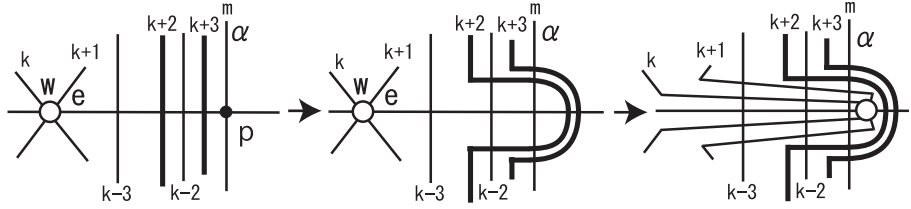}}
\vspace{5mm}
\caption{\label{fig09}Case (1): $k>m$ and $\varepsilon=+1$.}
\end{figure}

The following lemma will be used in
Case (d) in the proof of Lemma~\ref{CorThreeRed}
and Step~$3$, Step~$6$ and Step~$8$ in the proof of Theorem~\ref{NoLoop}.

\begin{lemma}
{\rm $($\cite[Lemma 5.4]{ChartAppl}$)$} 
\label{CorDiskLemma}
If a minimal chart $\Gamma$ contains the pseudo chart as shown in Fig.~\ref{fig10}, 
then the interior of the disk $D^*$ contains at least one white vertex, where $D^*$ is the disk with the boundary $e_3^* \cup e_4^* \cup e^*$.
\end{lemma}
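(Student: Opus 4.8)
The plan is to argue by contradiction. I would suppose that the minimal chart $\Gamma$ contains the pseudo chart of Fig.~\ref{fig10} but that $\mathrm{Int}\,D^*$ contains no white vertex, and then extract from this a C-move that strictly lowers the complexity of $\Gamma$, contradicting minimality. The whole argument is driven by the interplay between the middle arcs at the three corner white vertices of $\partial D^*=e_3^*\cup e_4^*\cup e^*$ and the local orientation/label pattern forced at each white vertex by condition (iii) of the definition of a chart.

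First I would pin down what can meet $\mathrm{Int}\,D^*$. Since $\mathrm{Int}\,D^*$ carries no white vertex, every edge of $\Gamma$ that enters $\mathrm{Int}\,D^*$ is of one of three types: a closed edge contained in $D^*$, an edge terminating at a black vertex inside $D^*$, or an edge joining two points of $\partial D^*$ (possibly through crossings). A closed edge inside $D^*$ is ruled out at once: a hoop contradicts Assumption~\ref{NoSimpleHoop}, while a ring would bound a complementary domain inside $D^*$ with no white vertex, contradicting Assumption~\ref{AssumptionRing}. An edge terminating at an interior black vertex is a terminal edge; by Assumption~\ref{NoTerminal} it carries a middle arc and contains no crossing, so it runs directly from a corner white vertex of $\partial D^*$ into $\mathrm{Int}\,D^*$, and a single C-I-M2 move absorbs it together with that corner white vertex, strictly decreasing $w(\Gamma)$ — the desired contradiction. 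Hence I may assume every edge meeting $\mathrm{Int}\,D^*$ is a chord joining two boundary points and crossing $e_3^*$, $e_4^*$, $e^*$ only at crossings.

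The heart of the proof is then the middle-arc analysis at the three corners. Reading off from Fig.~\ref{fig10} the labels of the boundary edges and the orientation data at the corner white vertices (where, at each such vertex, the two incident boundary edges carry labels differing by one), I would show that the middle-arc pattern forces one of two outcomes. In the first, a subarc of $e^*$ together with a chord through $\mathrm{Int}\,D^*$ bounds a lens of type $(m,m+1)$ whose interior is contained in $\mathrm{Int}\,D^*$; but Lemma~\ref{PolandLemma} demands at least three white vertices in the interior of any lens, which is impossible since $\mathrm{Int}\,D^*$ is free of white vertices. In the second, two of the relevant edges are parallel in the sense that a C-I-R2 or C-I-R3 move merges them, after which a C-I-M2 move removes one of the corner white vertices; this again lowers $w(\Gamma)$. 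Either branch contradicts minimality, completing the proof.

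The hard part will be the bookkeeping of crossings inside $D^*$. The chords may cross the boundary edges and one another, and condition (iv) forces the labels at those crossings to differ by more than one from the boundary labels, which constrains but does not trivialize the possibilities. Organizing these into finitely many local pictures, and in each one either exhibiting the lens that invokes Lemma~\ref{PolandLemma} or producing the explicit reducing C-move (using the Shifting Lemma, Lemma~\ref{Shift}, where a white vertex must first be slid off an edge before the merge), is where the real effort lies; once the cases are laid out, the contradiction-with-minimality step is uniform across all of them.
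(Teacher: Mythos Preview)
The paper does not give its own proof of this lemma: it is quoted verbatim from \cite[Lemma~5.4]{ChartAppl} and used as a black box, so there is no in-paper argument to compare your proposal against.

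Evaluating your sketch on its own merits, the overall shape (assume $\mathrm{Int}\,D^*$ has no white vertex, clear out hoops and rings by Assumptions~\ref{NoSimpleHoop} and~\ref{AssumptionRing}, then force a reducible configuration) is reasonable, but two steps are not sound as written. First, your treatment of a terminal edge entering $\mathrm{Int}\,D^*$ is wrong: a C-I-M2 move removes a \emph{pair} of white vertices joined by a bigon, not a single white vertex attached to a terminal edge; there is no C-move that ``absorbs'' one corner white vertex together with a pendant terminal edge. What one can do with such an edge is a C-III move sliding the black vertex through the white vertex, but that does not change $w(\Gamma)$ and you would still have to analyze the resulting picture. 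Second, the substantive content of the lemma is precisely what you label ``the hard part'' and then defer: the middle-arc bookkeeping at the three corners of $\partial D^*$, the label constraints from condition~(iv) on chords, and the case analysis that actually produces either a lens or an explicit reducing sequence. Saying that ``once the cases are laid out, the contradiction-with-minimality step is uniform'' is not a proof; in the original source the argument really does go through a careful local analysis near $D^*$, and nothing in your outline substitutes for it. As it stands your proposal is a plan with a plausible endpoint but with one incorrect step and the main argument missing.
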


\begin{figure}[t]
\centerline{\includegraphics{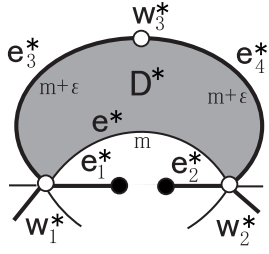}}
\vspace{5mm}
\caption{\label{fig10}The gray region is the disk $D^*$. The label of the edge $e^*$ is $m$,
and $\varepsilon\in\{+1,-1\}$.}
\end{figure}

We use the following notation:

In our argument,
we often need a name for an unnamed edge by using a given edge and a given white vertex.
For the convenience,
we use the following naming:
Let $e',e_i,e''$ be three consecutive edges containing  a white vertex $w_j$. Here, 
the two edges $e'$ and $e''$ are unnamed edges. 
There are six arcs in a neighborhood $U$ of the white vertex $w_j$. 
If the three arcs $e'\cap U$, $e_i \cap U$, $e'' \cap U$ lie anticlockwisely around the white vertex $w_j$ in this order, 
then $e'$ and $e''$ are denoted by $a_{ij}$ and $b_{ij}$ 
respectively (see Fig.~\ref{fig11}).
There is a possibility $a_{ij}=b_{ij}$ if they are contained in a loop.

\begin{figure}[t]
\centerline{\includegraphics{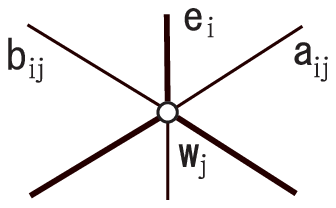}}
\vspace{5mm}
\caption{\label{fig11}}
\end{figure}

Let $\Gamma$ be a chart,
 and $v$ a vertex. 
Let $\alpha$ be a short arc of $\Gamma$ in a small neighborhood of $v$ with $v\in \partial \alpha$. 
If the arc $\alpha$ is oriented to $v$, then $\alpha$ is called {\it an inward arc}, 
and otherwise $\alpha$ is called {\it an outward arc}. 
 
Let $\Gamma$ be an $n$-chart. 
Let $F$ be a closed domain with $\partial F\subset \Gamma_{k-1}\cup\Gamma_{k}\cup \Gamma_{k+1}$ for some label $k$ of $\Gamma$, where $\Gamma_0=\emptyset$ and $\Gamma_{n}=\emptyset$. 
By Condition (iii) for charts,
in a small neighborhood of each white vertex, there are three inward arcs and three outward arcs.
Also in a small neighborhood of each black vertex, there exists only one inward arc or one outward arc.
We often use the following fact, 
when we fix (inward or outward) arcs 
near white vertices and black vertices: 
\begin{enumerate}
\item[]
{\it The number of inward arcs contained in $F\cap \Gamma_k$ is equal to the number of outward arcs in $F\cap \Gamma_k$.
}
\end{enumerate}
When we use this fact, 
we say that we use {\it IO-Calculation with respect to $\Gamma_k$ in $F$}.
For example, in a minimal chart $\Gamma$, 
consider the pseudo chart as shown in Fig.~\ref{fig12} where $\mu\in\{+1,-1\}$ and
\begin{enumerate}
\item[(1)] $D$ is the associated disk of a loop $\ell$ of label $k$ with $w(\Gamma\cap$Int$D)=3$,
\item[(2)]  $w_2,w_3,w_4$ are white vertices in Int$D$  with  $w_2,w_3,w_4\in\Gamma_{k+\mu}$,
\item[(3)] none of the four edges $a_{33},b_{33},a_{44},b_{44}$ contains a middle arc at $w_3$ nor $w_4$ (by Assumption~\ref{NoTerminal}
none of them is a terminal edge). 
\end{enumerate}
By (2), we have
$w_i\in\Gamma_{k}$ or $w_i\in\Gamma_{k+2\mu}$
for each $i=2,3,4$.
Let $F=Cl(D-D_1)$.
Then we can show that
\begin{enumerate}
\item[] if $w_2\in\Gamma_{k+2\mu}$,
then $w_3,w_4\in\Gamma_{k}$ or
$w_3,w_4\in\Gamma_{k+2\mu}$.
\end{enumerate}
For if not,
then one of $w_3,w_4$ is in $\Gamma_k$
and the other is in $\Gamma_{k+2\mu}$.
If $w_3\in\Gamma_k$ and $w_4\in\Gamma_{k+2\mu}$,
then 
considering 
$\partial D=\ell$ contains one inward arc and one outward arc,
by (3)
the number of inward arcs in $F\cap \Gamma_{k}$ is one,  
but the number of outward arcs in $F\cap \Gamma_k$ is three. 
This is a contradiction. 
Similarly if $w_3\in\Gamma_{k+2\mu}$ and $w_4\in\Gamma_{k}$,
then we have the same contradiction.
Thus if $w_2\in\Gamma_{k+2\mu}$,
then $w_3,w_4\in\Gamma_{k}$ or $w_3,w_4\in\Gamma_{k+2\mu}$.
Instead of the above argument, 
we just say that 
\begin{enumerate}
\item[]
{\it if $w_2\in\Gamma_{k+2\mu}$, 
then we have $w_3,w_4\in\Gamma_{k}$ or $w_3,w_4\in\Gamma_{k+2\mu}$ 
by IO-Calculation with respect to $\Gamma_{k}$ in $F$.}
\end{enumerate}

\begin{figure}[t]
\centerline{\includegraphics{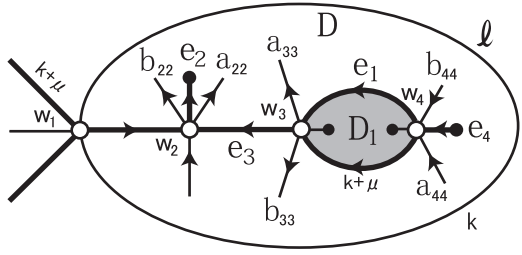}}
\vspace{5mm}
\caption{\label{fig12} The thick lines are edges of label $k+\mu$
where $\mu\in\{+1,-1\}$. 
The gray region is a 2-angled disk $D_1$.}
\end{figure}

 In our argument  we often construct a chart $\Gamma$. 
On the construction of a chart $\Gamma$, for a white vertex $w$,  
among the three edges of $\Gamma_m$ containing $w$, 
if one of the three edges is a terminal edge (see Fig.~\ref{fig13}(a) and (b)), 
then we remove the terminal edge and
put a black dot at the center of the white vertex  as shown in Fig.~\ref{fig13}(c).
Namely
Fig.~\ref{fig13}(c) means Fig.~\ref{fig13}(a) or Fig.~\ref{fig13}(b).

\begin{figure}[t]
\centerline{\includegraphics{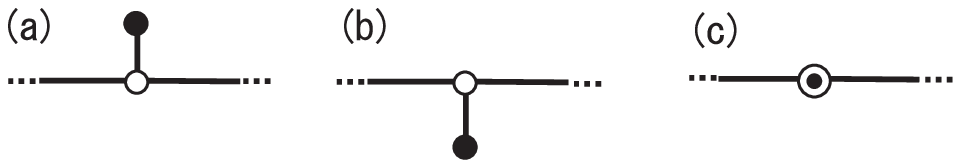}}
\vspace{5mm}
\caption{\label{fig13}}
\end{figure}


\section{{\large Proof of Lemma~3.2}}
\label{s:ProofLemma}

We start to prove Lemma~\ref{CorThreeRed}.
Let $\Gamma$ be a minimal chart 
with a loop $\ell$ of label $k$ 
such that $\Gamma$ is locally minimal with respect to the associated disk $D$ of $\ell$.
Suppose $w(\Gamma\cap$Int$D)\le 3$.
By Lemma~\ref{LoopTwoVertices}, 
we only need to examine 
the case $w(\Gamma\cap$Int$D)=3$.
In this section we assume that
\begin{enumerate}
\item[(1)] $w_1$ is the white vertex 
in the loop $\ell$ of label $k$, and
\item[(2)] $\mu$ is the integer in $\{+1,-1\}$ 
with $w_1\in\Gamma_{k+\mu}$.
\end{enumerate}
By Lemma~\ref{LoopTwoVertices}(1),
the condition $w(\Gamma\cap$Int$D)=3$ implies that 
the disk $D$ contains at most one loop of label $k+\mu$. 
Thus we can easily prove that 
the disk $D$ contains 
a pseudo chart of the RO-families of 
the six pseudo charts 
as shown in Fig.~\ref{fig14}. 
We use the notations as shown 
in Fig.~\ref{fig14} 
throughout this section.
There are six cases (see Fig.~\ref{fig14}).\\

\begin{figure}[t]
\centerline{\includegraphics{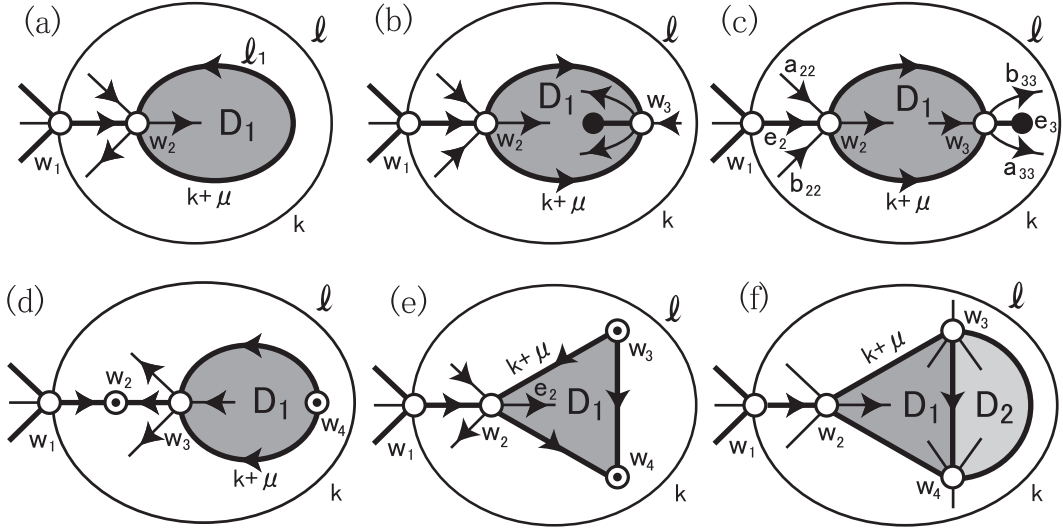}}
\vspace{5mm}
\caption{\label{fig14}
The thick lines are edges of label $k+\mu$
where $\mu\in\{+1,-1\}$.}
\end{figure}

{\bf Case (a).} 
Suppose that the disk $D$ contains 
the pseudo chart as shown in 
Fig.~\ref{fig14}(a),
where
\begin{enumerate}
\item[(a-1)]
the loop $\ell_1$ is 
of label $k+\mu$, and 
bounds a disk $D_1$.
\end{enumerate}
Since $w(\Gamma\cap$Int$D)=3$,
by Lemma~\ref{LoopTwoVertices}(1)
the disk $D_1$ contains exactly 
two white vertices in its interior,
say $w_3,w_4$.
Further by
 Lemma~\ref{LoopTwoVertices}(2)
the disk $D_1$ contains 
a pseudo chart of the RO-family of 
the pseudo chart 
as shown in Fig.~\ref{fig05}(a). 
Comparing Fig.~\ref{fig05}(a) and Fig.~\ref{fig14}(a), 
we have $D^*=D_1,m=k+\mu,w=w_2$ and
\begin{enumerate}
\item[(a-2)]
$e_1^*,e_2^*\subset
\Gamma_{k+\mu+\varepsilon}$,
$e_3^*, e_4^*, e_5^*, e_6^*\subset \Gamma_{k+\mu+2\varepsilon}$ where $\varepsilon\in\{+1,-1\}$
and
\item[(a-3)]
none of $e_3^*, e_4^*, e_5^*, e_6^*$ 
contains a middle arc at 
$w_3$ nor $w_4$
(hence none of $e_3^*, e_4^*, e_5^*, e_6^*$ is a terminal edge 
by Assumption~\ref{NoTerminal}).
\end{enumerate}
We claim that all of $e_3^*, e_4^*, e_5^*, e_6^*$ intersect $\ell=\partial D$.
If not,
then there exists a lens in $D$ not containing any white vertices in its interior,
because $e_3^*\not=e_6^*$ and $e_4^*\not=e_5^*$.
This contradicts Lemma~\ref{PolandLemma}.
Hence all of the four edges must 
intersect $\partial D$.
Thus $2\le |(k+\mu+2\varepsilon)-k|=
|\mu+2\varepsilon|$.
Hence $\varepsilon=\mu$
because $\varepsilon,\delta\in\{+1,-1\}$.
Thus by (a-2), 
we have 
$e_1^*, e_2^*\subset\Gamma_{k+2\mu}$, and
$e_3^*, e_4^*, e_5^*, e_6^*\subset \Gamma_{k+3\mu}$.
Therefore we have the pseudo chart as shown in Fig.~\ref{fig06}(a).\\


{\bf Case (b).}  Suppose that the disk $D$ 
contains the pseudo chart 
as shown in Fig.~\ref{fig14}(b), where
\begin{enumerate}
\item[(b-1)]
$D_1$ is a 2-angled disk of $\Gamma_{k+\mu}$ with {\it one feeler}.
\end{enumerate}
Since $w(\Gamma\cap$Int$D)=3$,
we have $w(\Gamma\cap$Int$D_1)\le1$.
Thus by Lemma~\ref{Theorem2AngledDisk},
the disk $D_1$ contains a pseudo chart of the RO-family of the pseudo chart 
as shown in Fig.~\ref{fig07}(c)
because the only pseudo chart contains a feeler.
Comparing Fig.~\ref{fig07}(c) and Fig.~\ref{fig14}(b), 
we have $m=k+\mu,w=w_2$ and
\begin{enumerate}
\item[(b-2)]
$e_1^*, e_2^*\subset\Gamma_{k+\mu+\varepsilon}$ where $\varepsilon\in\{+1,-1\}$,
\item[(b-3)]
$e_3^*, e_4^*\subset\Gamma_{k+\mu+2\delta}$,
$e_5^*\subset\Gamma_{k+\mu}$, $e_6^*,e_7^*\subset\Gamma_{k+\mu+\delta}$
where $\delta\in\{+1,-1\}$.
\end{enumerate}

We shall show $\mu=\varepsilon=\delta$.
If $\varepsilon=-\mu$, 
then we have
$e_1^*, e_2^*\subset \Gamma_k$.
Thus $e_1^*,e_2^*\subset Cl(D-D_1)$.
Hence whether $e_7^*\subset \Gamma_k$ or not,
we have a contradiction 
by 
IO-Calculation with respect to $\Gamma_{k}$ in $Cl(D-D_1)$.
Thus $\varepsilon=\mu$.
If $\delta=-\mu$, then we have
$e_3^*, e_4^*\subset\Gamma_{k-\mu}$.
Thus $e_3^*,e_4^*\subset D$.
Since $e_1^*, e_2^*\subset\Gamma_{k+2\mu}$ and
since $e_7^*\subset\Gamma_k$, 
we have a contradiction 
by IO-Calculation with respect to 
$\Gamma_{k-\mu}$ in $D$.
Thus we have $\delta=\mu$.
Hence by (b-2) and (b-3), we have
\begin{enumerate}
\item[(b-4)]
$e_1^*, e_2^*\subset\Gamma_{k+2\mu}$,
$e_3^*, e_4^*\subset\Gamma_{k+3\mu}$,
and
$e_6^*, e_7^*\subset\Gamma_{k+2\mu}$.
\end{enumerate}
Therefore we have the pseudo chart as shown in Fig.~\ref{fig06}(b).\\


{\bf Case (c).} 
Suppose that the disk $D$ contains the
pseudo chart as shown in Fig.~\ref{fig14}(c) 
where
\begin{enumerate}
\item[(c-1)] $D_1$ is a 2-angled disk of $\Gamma_{k+\mu}$ without feelers,
\item[(c-2)] the edges in $\partial D_1$ are oriented from $w_2$ to $w_3$, and
\item[(c-3)] 
each of edges $a_{22},b_{22},a_{33},b_{33}$ is of label $k$ or ${k+2\mu}$.
\end{enumerate}

Let $w_4$ be the white vertex in Int$D$ different from $w_2$ and $w_3$.
We show that
we can push $w_4$ out $D$ by C-moves.
Since $w(\Gamma\cap$Int$D)=3$,
we have $w(\Gamma\cap$Int$D_1)\le1$.
Thus 
considering the orientations of edges in $\partial D_1$,
by (c-1), (c-2) 
and Lemma~\ref{Theorem2AngledDisk},
the disk $D_1$ contains a pseudo chart of the RO-families of the two pseudo charts as shown in 
Fig.~\ref{fig07}(a) and (b).
Hence $w_4\in D-D_1$.
Let $s$ be the label with $w_4\in\Gamma_s\cap\Gamma_{s+1}$.
There are three cases:
\begin{enumerate}
\item[(i)]
for each $j=s,s+1$
 none of edges of $\Gamma_j$
containing $w_4$ intersects $\partial D$,
\item[(ii)] for each $j=s,s+1$
 there exists an edge of $\Gamma_j$
containing $w_4$ and intersecting $\partial D$,\item[(iii)] otherwise.
\end{enumerate}

{\bf Case (i).}
By Lemma~\ref{ComponentTwoWhite},
for each $j=s,s+1$
the connected component of $\Gamma_j$ containing $w_4$ contains another white vertex $w_2$ or $w_3$.
Hence there exists an edge $e_j'$ of $\Gamma_j$ with $e_j'\ni w_4$ containing $w_2$ or $w_3$. 
Here one of $e_s'$ and $e_{s+1}'$ is $a_{22}$ or $b_{22}$
and the other is $a_{33}$ or $b_{33}$.
Thus 
by (c-3),
the difference of labels of $e_s'$  and $e_{s+1}'$ is $0$ or $\pm2$.
This is a contradiction,
because $e_s'$ is of label $s$ and $e_{s+1}'$ is of lable ${s+1}$.
Hence Case (i) does not occur.

{\bf Case (ii).}
We have  $|s-k|\ge2$ and $|(s+1)-k|\ge2$,
because $\ell=\partial D$ is of label $k$.
Hence we can show $s<s+1<k$ or $k<s<s+1$.
By Shifting Lemma(Lemma~\ref{Shift})
we can push the white vertex $w_4$ 
from Int$D$ 
to the exterior of $D$ by C-moves.
However this contradicts the fact that 
$\Gamma$ is locally minimal 
with respect to $D$. 
Hence Case (ii) does not occur.

{\bf Case (iii).}
Let $e$ be the edge $\Gamma_s$ or $\Gamma_{s+1}$ with $e\ni w_4$ and $e\cap \partial D\not=\emptyset$, 
and $t$ the label of $e$.
Then $t\in\{s,s+1\}$.
Let $j\in \{s,s+1\}$ be the label different from $t$.
Then $t=j+\delta$ for some $\delta\in\{+1,-1\}$
and none of edges of $\Gamma_j$ containing $w_4$ intersects $\partial D$.
In a similar way to Case (i),
we can show that 
there exists an edge $e_j'$ of $\Gamma_j$
with $e_j'\ni w_4$
containing $w_2$ or $w_3$.
Here $e_j'$ is $a_{22},b_{22},a_{33}$ or $b_{33}$.
By (c-3),
we have $j=k$ or 
$j=k+2\mu$.
Thus we have $t=k+\delta$ or $t=k+2\mu+\delta$.
 Since the edge $e$ of label $t$ intersects the loop $\ell=\partial D$ of label $k$,
we have $|t-k|\ge2$.
Hence $\mu=\delta$ and $t=k+3\mu$.
Thus 
\begin{enumerate}
\item[(c-4)]
$w_4\in\Gamma_{k+2\mu}\cap\Gamma_{k+3\mu}$.
\end{enumerate}
There exists an arc $\beta$ in a regular neighborhood of 
$e_j'\cup D_1\cup e_2\cup e_3$ in $D$
connecting the vertex $w_4$ and a point $x$ in $\partial D$ such that 
Int$\beta$ intersects $\Gamma$ transversely
and  
\begin{enumerate}
\item[(c-5)] $\beta\cap\Gamma_{k+\mu}=\emptyset$.
\end{enumerate}
Let $p$ be a point in Int$\beta$ with $\Gamma\cap\beta[w_4,p]=w_4$.
By C-I-R2 moves,
in a regular neighborhood $N$ of $\beta[p,x]$
we can push all the arcs of $N\cap(\cup_{i\ge2}\Gamma_{k+i\mu})$ from $D$ along $\beta$
so that
 $(\beta-w_4)\cap\Gamma_{k+i\mu}=\emptyset$ 
($i\ge 2$),
and by (c-4) and (c-5)
we can push the white vertex $w_4$ along
$\beta$ 
from Int$D$ to the exterior of $D$ 
by C-I-R2 moves and C-I-R4 moves
(cf. \cite[Corollary 4.5]{ChartAppl}).
However again 
this contradicts the fact 
$\Gamma$ is locally minimal 
with respect to $D$.
Hence the disk $D$ does not contain 
the pseudo chart as shown 
in Fig.~\ref{fig14}(c).\\


{\bf Case (d).} 
Suppose that 
the disk $D$ contains the pseudo chart as shown in Fig.~\ref{fig14}(d).
Since $w(\Gamma\cap$Int$D_1)=0$,
considering the orientation of edges on $\partial D_1$,
 Lemma~\ref{Theorem2AngledDisk}
assures that 
the disk $D_1$ 
contains a pseudo chart of the RO-families of the two pseudo charts as shown in Fig.~\ref{fig07}(a) and (b).
So we have the pseudo chart as shown in Fig.~\ref{fig12} used for the explanation of IO-Calculation, here
\begin{enumerate}
\item[(d-1)] none of edges $a_{22},b_{22},a_{33},b_{33},a_{44},b_{44}$ contains a middle arc at $w_2,w_3$ nor $w_4$ (by Assumption~\ref{NoTerminal} none of them is a terminal edge).
\end{enumerate}

Since $w_2\in\Gamma_{k+\mu}$, 
we have also $w_2\in\Gamma_k$ or 
$w_2\in\Gamma_{k+2\mu}$.
If $w_2\in\Gamma_k$,
then $w_4\in\Gamma_k$
by IO-Calculation with respect to $\Gamma_k$ in $Cl(D-D_1)$. 
Hence $w_3\in\Gamma_{k+2\mu}$
by IO-Calculation with respect to $\Gamma_k$ in $Cl(D-D_1)$.
Thus by (d-1), 
we have $a_{22}=b_{44}$ and $b_{22}=a_{44}$.
Since $w_2,w_3,w_4$ are all the white vertices in Int$D$,
the disk $D_2$ bounded by $a_{22}\cup e_1\cup e_3$ does not contain any white vertices in its interior.
This contradicts Lemma~\ref{CorDiskLemma}
by setting $D^*=D_2, e_1^*=e_4, e_2^*=e_2, e^*=a_{22}, e_3^*=e_1, e_4^*=e_3$
in Fig.~\ref{fig10}. 
Hence $w_2\in\Gamma_{k+2\mu}$.
Thus 
as we have done in the explanation of IO-Calculation,
 we have $w_3,w_4\in\Gamma_k$ or 
 $w_3,w_4\in\Gamma_{k+2\mu}$ 
by IO-Calculation with respect to $\Gamma_k$ in $Cl(D-D_1)$.
If $w_3,w_4\in\Gamma_k$, 
then there exist two lenses of type $(k,k+\mu)$ in $D$ not containing any white vertices in their interiors.
This contradicts Lemma~\ref{PolandLemma}.
Thus $w_3,w_4\in\Gamma_{k+2\mu}$.
Looking at Fig.~\ref{fig12},
 we have the pseudo chart as shown in Fig.~\ref{fig06}(c). \\


{\bf Case (e).} 
Suppose that 
the disk $D$ contains the pseudo chart as shown in Fig.~\ref{fig14}(e) where
\begin{enumerate}
\item[(e-1)] $w_3,w_4$ are contained in terminal edges of label $k+\mu$, say $e_3,e_4$,
\item[(e-2)] the edge $e_2$ does not contain a middle arc at $w_2$ (hence $e_2$ is not a terminal edge by Assumption~\ref{NoTerminal}),
\item[(e-3)] the edge $e_2$ contains an outward arc at $w_2$.
\end{enumerate}
By (e-1), 
any feeler of $D_1$ is a terminal edge.
Thus the disk $D_1$ is a special 3-angled disk of $\Gamma_{k+\mu}$.
Hence $w(\Gamma\cap$Int$D_1)=0$ and Lemma~\ref{Theorem3AngledDisk}
imply that 
the disk $D_1$ contains a pseudo chart of the RO-families of the two pseudo charts as shown in Fig.~\ref{fig08}
each of which has no feeler.
Hence neither $e_3$ nor $e_4$ is a feeler.
That is
\begin{enumerate}
\item[(e-4)]  $e_3\cup e_4\subset Cl(D-D_1)$.
\end{enumerate}
Comparing Fig.~\ref{fig08} and Fig.~\ref{fig14}(e), we have $m=k+\mu$, and 
by (e-2) 
the edge $e_2$ is an edge of label $m+\varepsilon$ with two white vertices
where $\varepsilon\in\{+1,-1\}$. 
Hence
\begin{enumerate}
\item[(e-5)] the edge $e_2$ is an edge of $\Gamma_{k+\mu+\varepsilon}$ with 
$\partial e_2=\{w_2,w_3\}$ or  
$\partial e_2=\{w_2,w_4\}$.
\end{enumerate}
If $\partial e_2=\{w_2,w_3\}$,
then by (e-3)
the edge $e_2$ is oriented from $w_2$ to $w_3$.
This contradicts Condition (iii) of the definition of a chart.
Hence $\partial e_2=\{w_2,w_4\}$ (see Fig.~\ref{CaseE}).
Thus
\begin{enumerate}
\item[(e-6)] $w_2,w_4\in\Gamma_{k+\mu+\varepsilon}$.
\end{enumerate}
On the other hand,
by (e-1),
we have
\begin{enumerate}
\item[(e-7)] $w_3\in\Gamma_{k+\mu}\cap\Gamma_k$ or $w_3\in\Gamma_{k+\mu}\cap\Gamma_{k+2\mu}$.
\end{enumerate}
Suppose $\mu=-\varepsilon$.
Then $w_2,w_4\in\Gamma_k$ by (e-6).
If $w_3\in\Gamma_k$, then 
we have $a_{33}=b_{44}$ in Fig.~\ref{CaseE}
because the loop $\ell$ is of label $k$.
Hence there exists a lens of type $(k,k+\mu)$ in $Cl(D-D_1)$ containing $w_3,w_4$
but not containing any white vertices in its interior.
This contradicts Lemma~\ref{PolandLemma}.
Hence $w_3\in\Gamma_{k+2\mu}$  by (e-7).
But we have a contradiction by IO-Calculation with respect to $\Gamma_k$ in $Cl(D-D_1)$.
Thus $\mu=\varepsilon$.

By (e-6),
we have $w_2,w_4\in\Gamma_{k+2\mu}$.
If $w_3\in\Gamma_k$,
then we have a contradiction by IO-Calculation with respect to $\Gamma_k$ in $Cl(D-D_1)$.
Hence $w_3\in\Gamma_{k+2\mu}$  by (e-7). 
Therefore we have the pseudo chart as shown 
in Fig.~\ref{fig06}(d).\\

\begin{figure}[t]
\centerline{\includegraphics{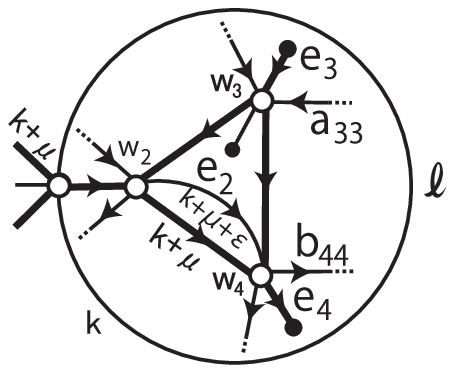}}
\vspace{5mm}
\caption{\label{CaseE} The thick lines are edges of label $k+\mu$ where $\mu,\varepsilon\in\{+1,-1\}$.}
\end{figure}


{\bf Case (f).}
Suppose that the disk $D$ contains the pseudo chart as shown in Fig.~\ref{fig14}(f).
Since $w(\Gamma\cap$Int$D_2)=0$, 
by Lemma~\ref{Theorem2AngledDisk}
the 2-angled disk $D_2$ contains a pseudo chart of the RO-families of the two pseudo charts as shown in Fig.~\ref{fig07}(a) and (b).
Hence both the two edges in $\partial D_2$ are oriented from $w_3$ to $w_4$.
Thus 
\begin{enumerate}
\item[(f-1)] the boundary of the 3-angled disk $D_1$ is oriented clockwise.
\end{enumerate}

On the other hand,
 the disk $D_1$ does not have any feelers. 
Thus the disk $D_1$ is special.
Since $w(\Gamma\cap$Int$D_1)=0$, 
by Lemma~\ref{Theorem3AngledDisk}
the disk $D_1$ contains a pseudo chart of the RO-families of the two pseudo charts  as shown in Fig.~\ref{fig08}. This contradicts Condition (f-1).
Hence $D$ does not contain the pseudo chart as shown in Fig.~\ref{fig14}(f). 

Therefore we complete the proof of Lemma~\ref{CorThreeRed}. \hfill $\square$


\section{{\large Solar eclipses}}
\label{s:SolarEclipse}

\begin{lemma}
\label{NoLens}
{\em (\cite[Corollary 1.3]{ChartAppII})}
Let $\Gamma$ be a minimal chart with at most seven white vertices.
Then there is no lens of $\Gamma$.
\end{lemma}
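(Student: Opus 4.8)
The plan is to argue by contradiction using the lens machinery already assembled in the earlier papers of this series, together with Lemma~\ref{PolandLemma}. Suppose $\Gamma$ is a minimal chart with $w(\Gamma)\le 7$ and suppose, for contradiction, that $\Gamma$ contains a lens $D$ of type $(m,m+1)$ for some label $m$. By Lemma~\ref{PolandLemma}, the interior of $D$ contains at least three white vertices. On the other hand the boundary $\partial D$ consists of an edge $e_1$ of $\Gamma_m$ and an edge $e_2$ of $\Gamma_{m+1}$, and $e_1$ carries two white vertices $w_1,w_2$; these lie on $\partial D$, not in $\mathrm{Int}\,D$. So $D$ already accounts for at least five white vertices of $\Gamma$: the two on $\partial D$ plus the three in $\mathrm{Int}\,D$. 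That leaves at most two white vertices of $\Gamma$ outside $Cl(D)$.

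First I would record what the two white vertices $w_1,w_2$ on $e_1$ force outside $D$. Each of $w_1,w_2$ lies on $\Gamma_m\cap\Gamma_{m+1}$ or on $\Gamma_{m-1}\cap\Gamma_m$ (it is a white vertex of three consecutive edges about which $e_1$ is one of the label-$m$ edges), and by condition~(2) in the definition of a lens the edge through $w_i$ other than $e_1$ and $e_2$ does not meet $\mathrm{Int}\,D$. Each $w_i$ therefore has edges emanating into $S^2-D$, and by Lemma~\ref{ComponentTwoWhite} the connected component of the relevant $\Gamma_j$ containing $w_i$ contains a second white vertex; I would chase these components to locate further white vertices forced to live outside $Cl(D)$, aiming to contradict the bound "at most two white vertices outside $Cl(D)$." The complementary side $S^2-D$ is itself a disk whose boundary is $e_1\cup e_2$, i.e. a $2$-angled disk of $\Gamma_m$ (equivalently of $\Gamma_{m+1}$), so I can also bring Lemma~\ref{Theorem2AngledDisk} to bear on that side: with at most two white vertices in its interior, and with the feeler/orientation data coming from the middle-arc condition defining a lens, the structure there is highly constrained.

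The cleanest route, and the one I would push on, is to observe that in fact the lens together with its two boundary vertices forces a configuration already handled: the disk on the far side, $Cl(S^2-D)$, is a $2$-angled disk of $\Gamma_m$ with at most one feeler (the feeler count is controlled by which of $e_1,e_2$ contains middle arcs at $w_1,w_2$, which is exactly the dichotomy (i)/(ii) in the definition of a lens), and we may assume $\Gamma$ is locally minimal with respect to it after C-moves. If $w(\Gamma\cap\mathrm{Int}(S^2-D))\le 1$ then Lemma~\ref{Theorem2AngledDisk} lists the possibilities in Fig.~\ref{fig07}; inspecting the labels and orientations in those pictures against the lens conditions (i),(ii) produces a contradiction — typically one finds an edge of $\Gamma_{m\pm1}$ that would have to meet $\mathrm{Int}\,D$, violating condition~(2), or a terminal/free edge forbidden by the Assumptions, or an $(m,m+1)$-lens nested on the other side again contradicting Lemma~\ref{PolandLemma} by the white-vertex count. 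If instead $w(\Gamma\cap\mathrm{Int}(S^2-D))\ge 2$, then combined with the three interior white vertices of $D$ and $w_1,w_2$ we exceed seven, a contradiction. Either way no lens can exist.

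The main obstacle I anticipate is the bookkeeping on the far side $S^2-D$: verifying that it really is a $2$-angled disk with at most one feeler for which local minimality can be arranged (so that Lemma~\ref{Theorem2AngledDisk} applies), and then checking each of the five model pseudo charts of Fig.~\ref{fig07} against the two defining alternatives for a lens to extract a contradiction in every case. This is the case analysis that carries the weight; the white-vertex count $5+2>7$ and the application of Lemma~\ref{PolandLemma} are the easy inputs. Everything else is a matter of tracking labels $m-1,m,m+1$ and the middle-arc conditions carefully, which is routine given the lemmata already available.
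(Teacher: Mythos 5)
The paper does not prove Lemma~\ref{NoLens} at all: it is imported verbatim from \cite[Corollary 1.3]{ChartAppII}, where the underlying theorem is that for a lens $D$ in a minimal chart \emph{both} $w(\Gamma\cap\mathrm{Int}\,D)\ge 3$ \emph{and} $w(\Gamma\cap(S^2-D))\ge 3$, so that $3+2+3=8>7$. Your count only establishes the first half ($3$ interior vertices from Lemma~\ref{PolandLemma} plus $w_1,w_2$ on $\partial D$), and the entire burden of the lemma is then the step you defer, namely showing that at most two white vertices in $S^2-D$ is impossible. That step is the content of a substantial theorem in the cited paper, not routine bookkeeping.

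Moreover, the main tool you propose for that step fails at the outset: $Cl(S^2-D)$ is \emph{not} a $2$-angled disk of $\Gamma_m$ (nor of $\Gamma_{m+1}$), because by definition a $k$-angled disk of $\Gamma_m$ must have its boundary made of $k$ edges of $\Gamma_m$, whereas $\partial D=e_1\cup e_2$ with $e_1\subset\Gamma_m$ and $e_2\subset\Gamma_{m+1}$. Hence Lemma~\ref{Theorem2AngledDisk} does not apply to the complementary region, and the notion of local minimality you invoke is likewise only defined for $k$-angled disks of a single $\Gamma_m$. Your fallback — chasing connected components via Lemma~\ref{ComponentTwoWhite} — can at best force one or two extra white vertices outside $D$ and is not carried to a contradiction. (A small additional point: since $\partial D$ is the circle $e_1\cup e_2$, both $w_1,w_2$ lie in $\Gamma_m\cap\Gamma_{m+1}$; the alternative $\Gamma_{m-1}\cap\Gamma_m$ you list does not occur.) So the proposal has a genuine gap: the exterior estimate is the missing idea, and the route proposed to obtain it would not work.
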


A subgraph of a chart is called 
a {\it solar eclipse} if 
it consists of two loops and contains
only one white vertex.

\begin{lemma}
\label{ConditionSolarEclipse}
{\em (\cite[Lemma 8.4]{ChartAppII})}
Let $\Gamma$ be a minimal chart with at most seven white vertices. 
If there exists a solar eclipse, 
then the associated disk of each loop of the solar eclipse contains at least three white vertices in its interior. 
\end{lemma}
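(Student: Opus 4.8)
\medskip
\noindent\textit{Proof proposal.}
The plan is to argue by contradiction: supposing that one of the two associated disks, say $D_1$ (the associated disk of $\ell_1$), has at most two white vertices in its interior, I would derive a configuration that cannot fit inside a chart with at most seven white vertices.

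First I would pin down the shape of a solar eclipse. Let $w$ be its unique white vertex and $\ell_1,\ell_2$ its two loops. Since a loop occupies two of the three arcs of its label at $w$ and only two labels occur at $w$, the loops cannot share a label; hence their labels are consecutive, say $k$ and $k+1$, and $w\in\Gamma_k\cap\Gamma_{k+1}$. Examining the six arcs at $w$, and using that each loop contains the middle arc of its own label at $w$, one checks that the associated disks $D_1,D_2$ have disjoint interiors, that the third edge of $\Gamma_k$ at $w$ runs into $D_2$ while the third edge of $\Gamma_{k+1}$ at $w$ runs into $D_1$, and that no edge at $w$ enters the complementary region $R=Cl(S^2-(D_1\cup D_2))$. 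For $\ell_1$ the integer $\varepsilon$ of Lemma~\ref{LoopTwoVertices} is $+1$ (as $w\in\Gamma_{k+1}$) and for $\ell_2$ it is $-1$. By Lemma~\ref{LoopTwoVertices}(1), $w(\Gamma\cap\mathrm{Int}D_i)\ge 2$ for $i=1,2$, so it suffices to rule out $w(\Gamma\cap\mathrm{Int}D_1)=2$.

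So assume $w(\Gamma\cap\mathrm{Int}D_1)=2$. By Lemma~\ref{LoopTwoVertices}(2), after C-moves in $D_1$ keeping $\ell_1=\partial D_1$ fixed, $D_1$ carries the pseudo chart of Fig.~\ref{fig05}(a) with $m=k$, $\varepsilon=+1$: two interior white vertices $w_3,w_4\in\Gamma_{k+1}\cap\Gamma_{k+2}$ together with four edges of label $k+2$ issuing from $w_3,w_4$ and carrying no middle arc there. Exactly as in Case (a) of the proof of Lemma~\ref{CorThreeRed}, these edges cannot all stay inside $D_1$ (else two of them bound a lens containing no white vertex, contradicting Lemma~\ref{NoLens}), so each meets $\ell_1$; and since a label-$(k+2)$ edge can neither cross $\ell_2$ (the labels differ by $1$) nor pass through $w$, the edges of $\Gamma_{k+2}$ that leave $D_1$ do so into $R$. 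As $w_3,w_4\in\Gamma_{k+2}$ and, by Lemma~\ref{ComponentTwoWhite}, every component of $\Gamma_{k+2}$ meeting a white vertex has at least two, following the components of $\Gamma_{k+2}$ out of $D_1$ forces further white vertices into $\mathrm{Int}R$; together with $w$, $w_3$, $w_4$ and the inequality $w(\Gamma\cap\mathrm{Int}D_2)\ge 2$, the bound of seven is met only when $w(\Gamma\cap\mathrm{Int}D_2)\le 3$ and the relevant components of $\Gamma_{k+2}$ are as small as possible. Applying Lemma~\ref{LoopTwoVertices}(2) (or Lemma~\ref{CorThreeRed}, after passing to a locally minimal chart) to $\ell_2$, now with $\varepsilon=-1$, $D_2$ likewise carries a controlled pseudo chart with interior white vertices in $\Gamma_{k-1}\cap\Gamma_k$ and four escaping edges of label $k-1$; these cannot cross $\ell_1$ either, so they too are pushed into $R$.

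The contradiction is then read off from the disk $R$, which receives four label-$(k+2)$ edges through $\partial D_1$ and four label-$(k-1)$ edges through $\partial D_2$ while only a tightly limited supply of white vertices remains available in $\mathrm{Int}R$ (each of them of label $k+2$, hence not of label $k-1$). Running IO-Calculation in $R$ with respect to $\Gamma_{k-1}$ and with respect to $\Gamma_{k+2}$, and invoking Lemma~\ref{NoLens} and Lemma~\ref{ComponentTwoWhite} once more, one checks that these edges cannot be closed off by black vertices, crossings, rings or hoops without exceeding seven white vertices, producing a lens, or forcing an arrangement excluded by the standing assumptions. \textbf{The main obstacle} is exactly this final bookkeeping: one must split into sub-cases according to whether $w_3,w_4$ (resp. the interior vertices of $D_2$) lie in one or two components of $\Gamma_{k+2}$ (resp. $\Gamma_{k-1}$) and according to whether $w(\Gamma\cap\mathrm{Int}D_2)$ is $2$ or $3$, and then account carefully for the arcs trapped in $R$ in each case; the steps leading up to this are essentially forced by Lemma~\ref{LoopTwoVertices}, Lemma~\ref{NoLens} and the crossing rule $|i-j|>1$.
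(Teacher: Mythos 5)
Note first that the paper does not prove this lemma at all---it is imported verbatim from \cite[Lemma 8.4]{ChartAppII}---so there is no internal proof to compare your attempt against. Your structural analysis of a solar eclipse is correct and well argued: the two loops must carry consecutive labels $k$ and $k+1$ (four arcs of one label at $w$ are impossible), the non-alternating cyclic order of the four loop-arcs at $w$ forces the associated disks $D_1,D_2$ to have disjoint interiors with no arc at $w$ entering $R=Cl(S^2-(D_1\cup D_2))$, and by Lemma~\ref{LoopTwoVertices}(1) the problem reduces to excluding $w(\Gamma\cap\mathrm{Int}D_1)=2$. The observation that the four label-$(k+2)$ edges supplied by Lemma~\ref{LoopTwoVertices}(2) must escape through $\ell_1$ into $\mathrm{Int}R$ (lens otherwise; they cannot cross $\ell_2$ or pass through $w$) is also correct.

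The gap is that the decisive contradiction is never derived: the final paragraph asserts that ``one checks'' the configuration is impossible and defers the argument to unspecified bookkeeping. Worse, the one tool you actually name for forcing new white vertices into $\mathrm{Int}R$, Lemma~\ref{ComponentTwoWhite}, is insufficient: the component of $\Gamma_{k+2}$ through $w_3$ already contains the two white vertices $w_3,w_4$, so that lemma forces nothing outside $D_1$. What is needed is the argument of Lemma~\ref{Table}(1) (whose proof does not depend on the present lemma): if $w(\Gamma_{k+2}\cap(S^2-D_1))=0$ the four non-terminal escaping edges must pair up into lenses, contradicting Lemma~\ref{NoLens}, and if it equals $1$ an IO-Calculation fails; hence $w(\Gamma_{k+2}\cap(S^2-D_1))\ge 2$. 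One must then argue these vertices lie in $\mathrm{Int}R$ rather than $\mathrm{Int}D_2$ (for $w(\Gamma\cap\mathrm{Int}D_2)\le 3$ this follows because Lemma~\ref{LoopTwoVertices}(2) or Lemma~\ref{CorThreeRed} applied to $\ell_2$ puts all interior vertices of $D_2$ in $\Gamma_{k-2}\cup\dots\cup\Gamma_{k+1}$; for $w(\Gamma\cap\mathrm{Int}D_2)=4$ one instead uses that $w(\Gamma\cap\mathrm{Int}R)=0$ traps the four escaping edges and reproduces the lens). Combining with the symmetric bound $w(\Gamma_{k-1}\cap(S^2-D_2))\ge 2$ and the fact that no white vertex lies in both $\Gamma_{k+2}$ and $\Gamma_{k-1}$ yields $w(\Gamma\cap\mathrm{Int}R)\ge 4$ against the available budget $7-1-2-2=2$. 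This count, organized by the value of $w(\Gamma\cap\mathrm{Int}D_2)$, \emph{is} the proof; your proposal assembles the ingredients but stops exactly where it should begin.
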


\begin{lemma}
\label{Table}
Let $D$ be the associated disk of a loop $\ell$ of label $m$ in a minimal chart $\Gamma$ with $w(\Gamma)=7$
such that $\Gamma$ is locally minimal with respect to $D$. 
Let $\varepsilon$ be the integer in $\{+1,-1\}$
such that the white vertex in $\ell$ is in $\Gamma_{m+\varepsilon}$.
Then we have the following:
\begin{enumerate}
\item[$(1)$]
If $w(\Gamma\cap${\rm Int}$D)=2$,
then $w(\Gamma_{m+2\varepsilon}\cap(S^2-D))\ge2$.
\item[$(2)$]
If $w(\Gamma\cap${\rm Int}$D)=3$,
then $w(\Gamma_{m+2\varepsilon}\cap(S^2-D))\ge1$.
\item[$(3)$]
If $w(\Gamma\cap${\rm Int}$D)=3$
and if the disk $D$ contains one of the two pseudo charts as shown in Fig.~\ref{fig06}{\rm (a)} and {\rm (b)} here $m=k$ and $\varepsilon=\mu$,
then $w(\Gamma_{m+3\varepsilon}\cap(S^2-D))\ge1$.
\end{enumerate}
\end{lemma}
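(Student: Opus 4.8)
The plan is to exploit the structure theorems already available for the associated disk $D$, namely Lemma~\ref{LoopTwoVertices} and Lemma~\ref{CorThreeRed}, together with the fact that $w(\Gamma)=7$. In all three parts the edges of $\Gamma_m$ emanating from the white vertices inside $D$ must go \emph{somewhere}; since $\ell=\partial D$ has label $m$, any edge of label $m+2\varepsilon$ (or $m+3\varepsilon$) meeting $\partial D$ would create a forbidden configuration, so such edges stay inside $D$ and their further white vertices are counted inside $D$. Conversely, the connected-component principle (Lemma~\ref{ComponentTwoWhite}) forces each label $m+j\varepsilon$ appearing in $\mathrm{Int}\,D$ to supply at least two white vertices to $\Gamma$, and counting shows some of these must lie outside $D$.

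For part (1): if $w(\Gamma\cap\mathrm{Int}\,D)=2$, Lemma~\ref{LoopTwoVertices}(2) gives that $D$ contains (up to RO-family) the pseudo chart of Fig.~\ref{fig05}(a), so both interior white vertices $w_3,w_4$ lie in $\Gamma_{m+\varepsilon}\cap\Gamma_{m+2\varepsilon}$ and the four edges $e_3^*,\dots,e_6^*$ have label $m+2\varepsilon$. These four edges cannot meet $\partial D$ (that would need $|2\varepsilon|\ge 2$ label-difference, which is fine, but then a lens appears — contradicting Lemma~\ref{NoLens} — or more directly one argues as in Case (a) of the proof of Lemma~\ref{CorThreeRed}): hence the component of $\Gamma_{m+2\varepsilon}$ through $w_3,w_4$ is strictly larger than what sits in $D$, so by Lemma~\ref{ComponentTwoWhite} it already has $\ge 2$ white vertices, and then IO-Calculation with respect to $\Gamma_{m+2\varepsilon}$ in a neighborhood of $D$ shows the edges $e_3^*,\dots,e_6^*$ must reach white vertices of $\Gamma_{m+2\varepsilon}$ outside $D$; since $w_3,w_4$ are the only ones inside, we get $w(\Gamma_{m+2\varepsilon}\cap(S^2-D))\ge 2$.

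For part (2): if $w(\Gamma\cap\mathrm{Int}\,D)=3$, apply Lemma~\ref{CorThreeRed}: $D$ contains one of the five pseudo charts of Fig.~\ref{fig05}(a), Fig.~\ref{fig06}. In each of these the three interior white vertices lie in $\Gamma_{m+2\varepsilon}$ (with $m=k$, $\varepsilon=\mu$ in the Fig.~\ref{fig06} notation), and in each picture there is at least one edge of $\Gamma_{m+2\varepsilon}$ issuing from an interior white vertex whose other endpoint, by IO-Calculation with respect to $\Gamma_{m+2\varepsilon}$ and the fact that it cannot be a terminal edge (the pseudo charts record no middle arcs there, Assumption~\ref{NoTerminal}), must be a white vertex; that vertex cannot be one of the three inside $D$ without forcing a lens not containing white vertices (contradicting Lemma~\ref{NoLens}) or a disk with no white vertices in its interior (contradicting Lemma~\ref{CorDiskLemma}), exactly as in Cases (a),(b),(d),(e) of the proof of Lemma~\ref{CorThreeRed}. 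Hence that white vertex lies in $S^2-D$, giving $w(\Gamma_{m+2\varepsilon}\cap(S^2-D))\ge 1$. Part (3) is the same argument one label further out: in Fig.~\ref{fig06}(a) the edges $e_3^*,\dots,e_6^*$ have label $m+3\varepsilon$ and in Fig.~\ref{fig06}(b) the edges $e_3^*,e_4^*$ have label $m+3\varepsilon$; these edges carry no middle arcs at $w_3,w_4$ (or $w_2$), hence are not terminal, so they must end at a white vertex of $\Gamma_{m+3\varepsilon}$, and since all of $w_2,w_3,w_4$ lie in $\Gamma_{m+2\varepsilon}$ (not $\Gamma_{m+3\varepsilon}$), that vertex is outside $D$, so $w(\Gamma_{m+3\varepsilon}\cap(S^2-D))\ge 1$.

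The main obstacle is the bookkeeping in part (1): one must rule out the possibility that the two interior white vertices $w_3,w_4$ together with edges inside $D$ already form a closed component of $\Gamma_{m+2\varepsilon}$ (which would make the inequality fail), and this is where the configuration of Fig.~\ref{fig05}(a) — specifically that $e_3^*\ne e_6^*$ and $e_4^*\ne e_5^*$, so the four label-$(m+2\varepsilon)$ edges are genuinely distinct and cannot all close up inside $D$ without producing a lens — must be used carefully, invoking Lemma~\ref{NoLens} to exclude the lens. Once that closure is impossible, the IO-Calculation count of inward versus outward arcs of $\Gamma_{m+2\varepsilon}$ in a regular neighborhood of $D$ (where $\partial D$ contributes nothing since it has label $m$) pins down that the extra white vertices are genuinely exterior, and the analogous but easier counts handle parts (2) and (3).
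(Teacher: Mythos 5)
Your overall strategy (read off the structure of $D$ from Lemma~\ref{LoopTwoVertices}/Lemma~\ref{CorThreeRed}, then argue that the non-terminal edges of label $m+2\varepsilon$ or $m+3\varepsilon$ must terminate at white vertices outside $D$) is the same as the paper's, but the execution has genuine gaps at exactly the places where the counting is delicate. In part (1), showing that the four edges $e_3^*,\dots,e_6^*$ do not all close up inside $D$ only rules out $w(\Gamma_{m+2\varepsilon}\cap(S^2-D))=0$; it does not give $\ge 2$. You never exclude the case of \emph{exactly one} exterior white vertex of $\Gamma_{m+2\varepsilon}$ (e.g.\ three of the four edges ending at a single exterior vertex while configurations at $w_3,w_4$ absorb the fourth). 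The paper handles this with a separate IO-Calculation with respect to $\Gamma_{m+2\varepsilon}$ in $Cl(S^2-D_1)$, where $D_1$ is the $2$-angled disk inside $D$: with exactly one white vertex of $\Gamma_{m+2\varepsilon}$ in $S^2-D_1$ the inward/outward arc count cannot balance. Your closing sentence asserts the conclusion but supplies no such count.

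In part (2) you argue that the other endpoint of a non-terminal label-$(m+2\varepsilon)$ edge ``cannot be one of the three inside $D$,'' but the failure mode you must exclude in the Fig.~\ref{fig06}(a) case is different: that edge can return to its \emph{own} white vertex $w_2$ and form a loop of label $m+2\varepsilon$, producing a solar eclipse with $\ell_1$; ruling this out requires Lemma~\ref{ConditionSolarEclipse} (the associated disk of $\ell_1$ has only two interior white vertices), not a lens or Lemma~\ref{CorDiskLemma}. The other subcases of (2) likewise need case-specific IO-Calculations rather than a uniform argument. Finally, in part (3) your claim that $w_2,w_3,w_4$ lie in $\Gamma_{m+2\varepsilon}$ but not in $\Gamma_{m+3\varepsilon}$ is false for Fig.~\ref{fig06}(a): there $w_3,w_4\in\Gamma_{m+2\varepsilon}\cap\Gamma_{m+3\varepsilon}$, so a label-$(m+3\varepsilon)$ edge at $w_3$ could a priori end at $w_4$ inside $D$ and your argument collapses. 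The paper avoids this by applying part (1) to the inner loop $\ell_1$ of label $m+\varepsilon$, which yields $w(\Gamma_{m+3\varepsilon}\cap(S^2-D_1))\ge2$ and hence at least two such vertices outside $D$.
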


\begin{proof}
We show Statement (1).
By Lemma~\ref{LoopTwoVertices}(2),
the disk $D$ contains a pseudo chart of the RO-family of the pseudo chart as shown in Fig.~\ref{fig05}(a)
here $D=D^*$.
Since none of the four edges $e_3^*,e_4^*,e_5^*,e_6^*$ of $\Gamma_{m+2\varepsilon}$ in Fig.~\ref{fig05}(a) contains a middle arc at a white vertex in Int$D$,
all of four edges $e_3^*,e_4^*,e_5^*,e_6^*$ are not terminal edges
by Assumption~\ref{NoTerminal}.

If $w(\Gamma_{m+2\varepsilon}\cap (S^2-D))=0$,
then $e_3^*=e_5^*$ and $e_4^*=e_6^*$.
Thus there exist two lenses of type $(m+\varepsilon,m+2\varepsilon)$. 
This contradicts Lemma~\ref{NoLens}. 

Now suppose that $w(\Gamma_{m+2\varepsilon}\cap(S^2-D))=1$.
Let $D_1$ be the 2-angled disk of $\Gamma_{m+2\varepsilon}$ in $D$.
Since $w(\Gamma_{m+2\varepsilon}\cap(S^2-D))=1$
and $w(\Gamma_{m+2\varepsilon}\cap(D-D_1))=0$,
there exists only one white vertex of $\Gamma_{m+2\varepsilon}$ in $S^2-D_1$.
Thus we have a contradiction by IO-Calculation with respect to $\Gamma_{m+2\varepsilon}$
in $Cl(S^2-D_1)$.
Hence $w(\Gamma_{m+2\varepsilon}\cap(S^2-D))\ge2$. 

We show Statement (3).
Suppose that the disk $D$ contains the pseudo chart as
shown in Fig.~\ref{fig06}(a)
here $k=m$ and $\mu=\varepsilon$.
Then the loop $\ell_1$ of label $m+\varepsilon$ in $D$ bounds the associated disk $D_1$ with $w(\Gamma\cap$Int$D_1)=2$.
By Lemma~\ref{Table}(1),
we have $w(\Gamma_{(m+\varepsilon)+2\varepsilon}\cap(S^2-D_1))\ge2$.
Since $w(\Gamma_{m+3\varepsilon}\cap(D-D_1))=0$, 
we have $w(\Gamma_{m+3\varepsilon}\cap(S^2-D))\ge2$.
Thus clearly
 $w(\Gamma_{m+3\varepsilon}\cap(S^2-D))\ge1$.

Suppose that the disk $D$ contains the pseudo chart as
shown in Fig.~\ref{fig06}(b)
here $k=m$ and $\mu=\varepsilon$.
By Lemma~\ref{ComponentTwoWhite},
the condition $w(\Gamma_{m+3\varepsilon}\cap D)=1$ implies $w(\Gamma_{m+3\varepsilon}\cap(S^2-D))\ge1$.

We show Statement (2).
Now suppose that
\begin{enumerate}
\item[(i)] $w(\Gamma_{m+2\varepsilon}\cap(S^2-D))=0$.
\end{enumerate}
Since $w(\Gamma\cap$Int$D)=3$,
 by Lemma~\ref{CorThreeRed}
the disk $D$ contains a pseudo chart of the RO-families of the four pseudo charts as
shown in Fig.~\ref{fig06} where $k=m$ and $\mu=\varepsilon$. 

Suppose that the disk $D$ contains the pseudo chart as
shown in Fig.~\ref{fig06}(a).
By (i),
we have a loop $\ell_2$ of label $m+2\varepsilon$ so that $\ell_1\cup \ell_2$ is a solar eclipse.
But the associated disk of $\ell_1$ contains only two white vertices in its interior.
This contradicts Lemma~\ref{ConditionSolarEclipse}.

Suppose that the disk $D$ contains the pseudo chart as
shown in Fig.~\ref{fig06}(b).
Let $D_1$ be the $2$-angled disk of $\Gamma_{m+\varepsilon}$ in $D$. 
Since $w(\Gamma_{m+2\varepsilon}\cap (D-D_1))=0$, the condition (i) implies that 
$w(\Gamma_{m+2\varepsilon}\cap(S^2-D_1))=0$.
Hence we have a contradiction by IO-Calculation with respect to $\Gamma_{m+2\varepsilon}$
in $Cl(S^2-D_1)$.

Suppose that the disk $D$ contains the pseudo chart as
shown in Fig.~\ref{fig06}(c).
Let $D_1$ be the $2$-angled disk of $\Gamma_{m+\varepsilon}$ in $D$. 
Since there exists only one white vertex  
of $\Gamma_{m+2\varepsilon}$ in $D-D_1$,
the condition (i) implies that
 $w(\Gamma_{m+2\varepsilon}\cap(S^2-D_1))=1$.
Hence we have a contradiction
by IO-Calculation with respect to $\Gamma_{m+2\varepsilon}$ in $Cl(S^2-D_1)$.

Suppose the disk $D$ contains the pseudo chart as
shown in Fig.~\ref{fig06}(d).
Let $D_1$ be the 3-angled disk of $\Gamma_{m+\varepsilon}$ in $D$.
Since $w(\Gamma_{m+2\varepsilon}\cap (D-D_1))=0$, the condition (i) implies that  
$w(\Gamma_{m+2\varepsilon}\cap(S^2-D_1))=0$.
Hence
for the edge $e_1'$ in Fig.~\ref{fig06}(d)
we have $e_1'=e_2'$, $e_1'=e_3'$ or $e_1'=e_4'$.
If $e_1'=e_2'$,
then there exists a lens of type $(m+\varepsilon,m+2\varepsilon)$ containing $w_3,w_4$.
This contradicts Lemma~\ref{NoLens}.
If $e_1'=e_3'$ or $e_1'=e_4'$,
then the edge $e_1'$ separates the disk $Cl(S^2-D_1)$ into two disks.
Let $D'$ be the disk of the two disks
containing the edge $e_2'$.
Then we have a contradiction by IO-Calculation with respect to $\Gamma_{m+2\varepsilon}$ in $D'$.

Therefore we have a contradiction for any cases. Hence $w(\Gamma_{m+2\varepsilon}\cap(S^2-D))\ge1$. 
\end{proof}


\begin{proposition}
\label{NoSolarEclipse}
There is no solar eclipse in a minimal chart with exactly seven white vertices.
\end{proposition}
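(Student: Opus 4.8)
The plan is to argue by contradiction. Assume a minimal chart $\Gamma$ with $w(\Gamma)=7$ contains a solar eclipse, i.e.\ two loops $\ell,\ell'$ meeting only at a single white vertex $w$. Since the edges incident to $w$ carry only two labels $i,i+1$, and two loops of the same label would require four arcs of that label at $w$ while there are only three, the two loops carry consecutive labels; write $\ell\subset\Gamma_m$ and $\ell'\subset\Gamma_{m+1}$, so $w\in\Gamma_m\cap\Gamma_{m+1}$, and let $D$ and $D'$ be the associated disks of $\ell$ and $\ell'$. The first and, I expect, most delicate step is to establish that $D$ and $D'$ have disjoint interiors.

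For that I would analyse the cyclic order of the six arcs around $w$: two arcs of label $m$ belonging to $\ell$, one arc of label $m$ belonging to the third edge $e$ of $\Gamma_m$ at $w$, two arcs of label $m+1$ belonging to $\ell'$, and one arc of label $m+1$ belonging to the third edge $e'$ of $\Gamma_{m+1}$ at $w$, keeping track of the inward/outward pattern forced by Condition (iii). Because $\ell\cap\ell'=\{w\}$, the open arc $\ell'-w$ cannot meet $\ell=\partial D$, so it lies entirely on one side of $\ell$; the orientation pattern excludes the $D$-side, whence $\ell'-w\subset S^2-D$. Reading off that the edge $e'$ then runs into $\mathrm{Int}\,D$ identifies $D'$ as the disk bounded by $\ell'$ that lies in $Cl(S^2-D)$, and therefore $\mathrm{Int}\,D\cap\mathrm{Int}\,D'=\emptyset$. (Thus $S^2$ is cut by $\ell\cup\ell'$ into $D$, $D'$, and a middle disk $D''$ with $\partial D''=\ell\cup\ell'$.)

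Next I would run the white-vertex count. By Lemma~\ref{ConditionSolarEclipse} we have $w(\Gamma\cap\mathrm{Int}\,D)\ge3$ and $w(\Gamma\cap\mathrm{Int}\,D')\ge3$; since $w$ lies on $\partial D$ and on $\partial D'$, the two interiors are disjoint, and $3+3+1=7=w(\Gamma)$, we get $w(\Gamma\cap\mathrm{Int}\,D)=w(\Gamma\cap\mathrm{Int}\,D')=3$ exactly, and no white vertex lies in $S^2-(D\cup D')$. Consequently the three white vertices of $\Gamma$ in $S^2-D$ are precisely the three in $\mathrm{Int}\,D'$, and likewise the three in $S^2-D'$ are precisely the three in $\mathrm{Int}\,D$.

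The contradiction then comes from the labels. We may assume $\Gamma$ is locally minimal with respect to $D$ and with respect to $D'$, since those adjustments take place in the disjoint regions $\mathrm{Int}\,D$ and $\mathrm{Int}\,D'$. The white vertex of $\ell$ lies in $\Gamma_{m+1}$, so Lemma~\ref{Table}(2) applied to $D$ produces a white vertex $v\in\Gamma_{m+2}\cap(S^2-D)$, and by the previous paragraph $v\in\mathrm{Int}\,D'$. On the other hand the white vertex of $\ell'$ lies in $\Gamma_m=\Gamma_{(m+1)-1}$, so Lemma~\ref{CorThreeRed} applied to $D'$ (with $k=m+1$, $\mu=-1$), together with Lemma~\ref{LoopTwoVertices}, shows that $D'$ contains one of the four pseudo charts of Fig.~\ref{fig06}, all of whose edges have label $m$, $m-1$ or $m-2$ and whose three white vertices are exactly the three white vertices of $\Gamma$ in $\mathrm{Int}\,D'$. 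Hence each of those three lies in $\Gamma_m\cup\Gamma_{m-1}\cup\Gamma_{m-2}$, so none lies in $\Gamma_{m+2}$, contradicting $v\in\Gamma_{m+2}\cap\mathrm{Int}\,D'$. In this plan the combinatorial weight is carried entirely by Lemmas~\ref{ConditionSolarEclipse}, \ref{Table} and \ref{CorThreeRed}; the only genuinely new work is the geometric layout of the solar eclipse, whose sole subtlety is the disjointness of $\mathrm{Int}\,D$ and $\mathrm{Int}\,D'$.
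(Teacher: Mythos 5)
Your proposal is correct and follows essentially the same route as the paper's proof: Lemma~\ref{ConditionSolarEclipse} forces $w(\Gamma\cap\mathrm{Int}\,D)=w(\Gamma\cap\mathrm{Int}\,D')=3$, Lemma~\ref{CorThreeRed} applied to $D'$ (with $k=m+1$, $\mu=-1$) confines the three white vertices of $\mathrm{Int}\,D'$ to labels $m,m-1,m-2$, and Lemma~\ref{Table}(2) applied to $D$ produces a white vertex of $\Gamma_{m+2}$ in $S^2-D$, yielding the same contradiction that the paper obtains by counting $w(\Gamma)\ge 8$. Your explicit verification that $\mathrm{Int}\,D$ and $\mathrm{Int}\,D'$ are disjoint (via the alternating labels of the six arcs at $w$ and the definition of the associated disk) is a point the paper uses silently, and it is a worthwhile addition.
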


\begin{proof}
Suppose that there exists a solar eclipse $G$ in a minimal chart $\Gamma$ with $w(\Gamma)=7$.
Then $G$ consists of two loops $\ell_1$ and $\ell_2$ 
with $\ell_1\subset \Gamma_m$ and $\ell_2\subset\Gamma_{m+1}$ for some label $m$. 
We can assume that
$\Gamma$ is locally minimal with respect to the associated disks $D_1$ and $D_2$ of the loops $\ell_1$ and $\ell_2$ respectively.
By Lemma~\ref{ConditionSolarEclipse},
we have $w(\Gamma\cap$Int$D_1)\ge3$ and $w(\Gamma\cap$Int$D_2)\ge3$. 
Since $w(\Gamma)=7$ and $w(\ell_1\cup \ell_2)=1$,
we have $w(\Gamma\cap$Int$D_1)=3$ and $w(\Gamma\cap$Int$D_2)=3$.
By Lemma~\ref{CorThreeRed},
the two disks $D_1$ and $D_2$ contain pseudo charts of the RO-families of the four pseudo charts as shown in Fig.~\ref{fig06}. 
For $\ell_2\subset \Gamma_{m+1}$
where $\ell=\ell_2,k=m+1,\mu=-1$ in Fig.~\ref{fig06} we have
\begin{enumerate}
\item[(1)]  $w((\Gamma_{m}\cup \Gamma_{m-1})\cap$Int$D_2)=3$. 
\end{enumerate}
Since $S^2-D_1\supset$Int$D_2$, by (1)
we have
$w((\Gamma_m\cup\Gamma_{m-1})\cap(S^2-D_1))\ge w((\Gamma_m\cup\Gamma_{m-1})\cap$Int$D_2)=3$. 
Setting $\ell=\ell_1\subset\Gamma_m$
and $\varepsilon=1$
(because the white vertex in $\ell_1$ is in $\Gamma_{m+1}$),
applying Lemma~\ref{Table}(2)
we have $w(\Gamma_{m+2}\cap(S^2-D_1))\ge1$.
 Hence we have\\
$\begin{array}{rl}
w(\Gamma\cap(S^2-D_1)) &\ge 
w((\Gamma_m\cup\Gamma_{m-1})\cap(S^2-D_1))+w(\Gamma_{m+2}\cap(S^2-D_1))\\
&\ge 3+1=4.
\end{array}$\\
Thus
\begin{center}
$w(\Gamma)= w(\ell_1)+w(\Gamma\cap$Int$D_1)+w(\Gamma\cap(S^2-D_1))\ge 1+3+4=8$.
\end{center}
This contradicts the fact  $w(\Gamma)=7$.
Therefore there is no solar eclipse in $\Gamma$.
\end{proof}

\begin{lemma}
\label{ThreeVertexExt}
Let $\ell$ be a loop of label $m$ in a minimal chart $\Gamma$ with $w(\Gamma)=7$,
and $D$ the associated disk of $\ell$.
Then $w(\Gamma_m\cap(S^2-D))\le2$.
\end{lemma}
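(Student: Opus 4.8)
The plan is to prove the bound by a short white‑vertex count, combining Lemma~\ref{LoopTwoVertices}(1) with Lemma~\ref{Table}. Let $w_1$ be the unique white vertex of the loop $\ell$, and let $\varepsilon\in\{+1,-1\}$ be the integer with $w_1\in\Gamma_{m+\varepsilon}$. First I would reduce to the case that $\Gamma$ is locally minimal with respect to $D$: the $C$-moves in a regular neighborhood of $D$ keeping $\partial D$ fixed that realize local minimality can be taken so as not to alter $\Gamma$ in $S^2-D$, hence not to change the quantity $w(\Gamma_m\cap(S^2-D))$ we wish to bound; so we may assume local minimality and then Lemma~\ref{Table} is applicable.

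Next I would partition the seven white vertices of $\Gamma$. Since $\ell$ is a loop, $w_1$ is the only white vertex on $\partial D$, so with $a=w(\Gamma\cap \mathrm{Int}\,D)$ and $b=w(\Gamma\cap(S^2-D))$ we have $a+b=6$, and by Lemma~\ref{LoopTwoVertices}(1) we have $a\ge2$ and $b\ge2$; hence $(a,b)\in\{(2,4),(3,3),(4,2)\}$. I would also record the elementary remark that every white vertex lies in exactly two of the subgraphs $\Gamma_j$ and that these two labels are consecutive, so that no white vertex lies in both $\Gamma_m$ and $\Gamma_{m+2\varepsilon}$; consequently
$$b=w(\Gamma\cap(S^2-D))\ \ge\ w(\Gamma_m\cap(S^2-D))+w(\Gamma_{m+2\varepsilon}\cap(S^2-D)).$$

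Now assume, for contradiction, that $x:=w(\Gamma_m\cap(S^2-D))\ge3$. Since $x\le b$, the pair $(a,b)=(4,2)$ is ruled out, so $a\in\{2,3\}$. If $a=2$, then $b=4$ and Lemma~\ref{Table}(1) gives $w(\Gamma_{m+2\varepsilon}\cap(S^2-D))\ge2$, whence $4=b\ge x+2\ge5$, a contradiction. If $a=3$, then $b=3$ and Lemma~\ref{Table}(2) gives $w(\Gamma_{m+2\varepsilon}\cap(S^2-D))\ge1$, whence $3=b\ge x+1\ge4$, again a contradiction. Therefore $x\le2$, which is the assertion.

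I do not anticipate a genuine obstacle here: once Lemma~\ref{Table} is in hand the argument is pure bookkeeping. The two points that deserve care are the initial reduction — one must check that passing to a chart locally minimal with respect to $D$ leaves $w(\Gamma_m\cap(S^2-D))$ unchanged — and the disjointness of the white-vertex sets of $\Gamma_m$ and $\Gamma_{m+2\varepsilon}$, which is exactly what permits the estimates of Lemma~\ref{Table} on $\Gamma_{m+2\varepsilon}$ outside $D$ to be added to the $\ge x$ white vertices of $\Gamma_m$ that already lie outside $D$.
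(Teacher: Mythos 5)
Your proposal is correct and follows essentially the same route as the paper: reduce to the locally minimal case, split on $w(\Gamma\cap\mathrm{Int}\,D)\in\{2,3,\ge4\}$ using Lemma~\ref{LoopTwoVertices}(1), and apply Lemma~\ref{Table}(1)--(2) together with the disjointness of the white-vertex sets of $\Gamma_m$ and $\Gamma_{m+2\varepsilon}$ to force $w(\Gamma)\ge8$, a contradiction. The only difference is cosmetic bookkeeping (you bound $b$ directly rather than totalling to $8$).
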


\begin{proof}
Suppose that $w(\Gamma_m\cap(S^2-D))\ge3$.
We shall show $w(\Gamma)\ge8$. 
It suffices to prove the case that $\Gamma$ is locally minimal with respect to $D$.
Let $\varepsilon$ be the integer in $\{+1,-1\}$ such that 
the white vertex in $\ell$ is in $\Gamma_{m+\varepsilon}$.
By Lemma~\ref{LoopTwoVertices}(1)
we have $w(\Gamma\cap$Int$D)\ge2$.

If $w(\Gamma\cap$Int$D)=2$,
then by Lemma~\ref{Table}(1) 
we have $w(\Gamma_{m+2\varepsilon}\cap(S^2-D))\ge2$.
Since $w(\Gamma_m\cap(S^2-D))\ge3$,
we have $w(\Gamma\cap(S^2-D))\ge3+2$. Thus
\begin{center}
$w(\Gamma)= w(\ell)+w(\Gamma\cap$Int$D)+w(\Gamma\cap(S^2-D))\ge 1+2+(3+2)=8$.
\end{center}
If $w(\Gamma\cap$Int$D)=3$,
then by Lemma~\ref{Table}(2) 
we have $w(\Gamma_{m+2\varepsilon}\cap(S^2-D))\ge1$.
Similarly we can show $w(\Gamma)\ge 1+3+(3+1)=8$.
If $w(\Gamma\cap$Int$D)\ge4$,
then we can show $w(\Gamma)\ge1+4+(3+0)=8$.
Hence we have $w(\Gamma)\ge8$.
This contradicts the fact $w(\Gamma)=7$.
Therefore $w(\Gamma_m\cap(S^2-D))\le2$.
\end{proof}


\section{{\large Pairs of eyeglasses and pairs of skew eyeglasses}}
\label{s:Eyeglasses}

\begin{lemma}
\label{TwoThreeLoop}
{\em (cf. \cite[Lemma 6.2]{ChartAppII})}
Let $\Gamma$ be a minimal chart,
and $m$ a label of $\Gamma$.
Let $G$ be a connected component of $\Gamma_m$
containing a white vertex and a loop. 
If $G$ contains at most three white vertices,
then $G$ is one of the three subgraphs as shown 
in Fig.~\ref{fig15}. 
\end{lemma}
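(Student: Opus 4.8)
The plan is to trace the component $G$ outward from the loop, exploiting the fact that every white vertex lying on $\Gamma_m$ meets exactly three half-edges of $\Gamma_m$ (a loop at the vertex counted twice). I would first record two preliminary observations. By Lemma~\ref{ComponentTwoWhite}, $G$ contains at least two white vertices, so $w(G)\in\{2,3\}$. Secondly, the two middle arcs at a white vertex lie three positions apart in the cyclic order of its six short arcs, hence carry different labels; so if $v$ is a white vertex on $\Gamma_m$, exactly one of its two middle arcs has label $m$, and by Assumption~\ref{NoTerminal} (every terminal edge of $\Gamma_m$ contains a middle arc) the vertex $v$ is an endpoint of \emph{at most one} terminal edge of $\Gamma_m$. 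These, together with the degree-three count above, are the only combinatorial inputs.

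Let $\ell\subset G$ be a loop of label $m$ and $w_1$ its white vertex. Traversing $\ell$ along its orientation, it leaves $w_1$ along an outward $m$-arc and returns along an inward $m$-arc, so the remaining $m$-arc at $w_1$ belongs to a uniquely determined edge $e$ of $\Gamma_m$ with $e\neq\ell$. Since $G$ is connected and, by the first observation, contains a white vertex other than $w_1$, while $\ell$ and $e$ are the only edges of $\Gamma_m$ incident to $w_1$, the edge $e$ is neither free nor terminal; hence $e$ joins $w_1$ to a second white vertex $w_2$. So far $G$ looks like $\ell$ attached to $w_1$, with $w_1$ joined by $e$ to $w_2$, and it remains to analyse $w_2$ (and, when $w(G)=3$, the third white vertex).

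At $w_2$, one $m$-arc is used by $e$, leaving two half-edges of $\Gamma_m$ to account for. If $w(G)=2$, these cannot reach $w_1$ (its remaining $m$-arcs are consumed by $\ell$), and by the second observation at most one is a terminal edge, so they must form a loop $\ell'$ of label $m$ at $w_2$; thus $G=\ell\cup e\cup\ell'$. If $w(G)=3$, let $w_3$ be the third white vertex. Then the two free half-edges at $w_2$ are used either (i) by two edges from $w_2$ to $w_3$, or (ii) by one edge $f$ from $w_2$ to $w_3$ and one terminal edge at $w_2$ --- a loop at $w_2$ would leave $w_3$ in another component, an edge returning to $w_1$ is impossible, and two terminal edges at $w_2$ are excluded by the second observation. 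Running the same bookkeeping at $w_3$ then forces, in case (i), a single terminal edge at $w_3$, and in case (ii), a loop of label $m$ at $w_3$. Up to its RO-family, each outcome is one of the three subgraphs of Fig.~\ref{fig15}.

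The only genuinely delicate points are making the ``which $m$-half-edge goes where'' enumeration exhaustive, which the degree-three structure plus the ``at most one terminal edge'' fact accomplish, and confirming that each enumerated subgraph is realizable --- the loops, $e$, and $f$ may carry crossings, which is why they are drawn as pseudo charts. Should the enumeration throw up a configuration absent from Fig.~\ref{fig15}, it would be eliminated by a C-move: for instance the double edge in case (i) bounds a $2$-angled disk of $\Gamma_m$, to which Lemma~\ref{Theorem2AngledDisk} and the minimality of $\Gamma$ apply, exactly as in the proof of \cite[Lemma~6.2]{ChartAppII}. Everything else is routine.
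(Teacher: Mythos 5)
The paper does not actually prove Lemma~\ref{TwoThreeLoop}: it imports the statement from \cite[Lemma~6.2]{ChartAppII}, so there is no in-paper argument to compare yours against. Your blind proof is, as far as I can check, a correct self-contained derivation, and it rests on exactly the right two inputs: (a) every white vertex of $\Gamma_m$ carries exactly three arc-ends of label $m$ (a loop consuming two of them), and (b) since the two middle arcs at a white vertex sit three positions apart in the cyclic order of the six arcs, they carry different labels, so Assumption~\ref{NoTerminal} forces at most one terminal edge of $\Gamma_m$ per white vertex. With $w(G)\in\{2,3\}$ guaranteed by Lemma~\ref{ComponentTwoWhite}, the arc-end bookkeeping at $w_1$, $w_2$, $w_3$ is exhaustive and lands precisely on the pair of eyeglasses and the two pairs of skew eyeglasses. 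Two small points worth tightening: in your case (ii) the resulting graph (loops at the two path-ends, terminal edge at the middle vertex) is Fig.~\ref{fig15}(c) only after relabelling the vertices, which you should say explicitly; and your closing paragraph, which proposes eliminating hypothetical extra configurations by C-moves via Lemma~\ref{Theorem2AngledDisk}, is a hedge you do not need --- your enumeration produces no extra configurations, and invoking C-moves would anyway be delicate here because the lemma is a statement about the subgraph $G$ of the given chart, not about a chart obtained after modification. The argument is purely combinatorial except for the appeal to Lemma~\ref{ComponentTwoWhite} and Assumption~\ref{NoTerminal}, both of which do require minimality, so the hypothesis that $\Gamma$ is minimal is used exactly where it should be.
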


\begin{figure}[t]
\centerline{\includegraphics{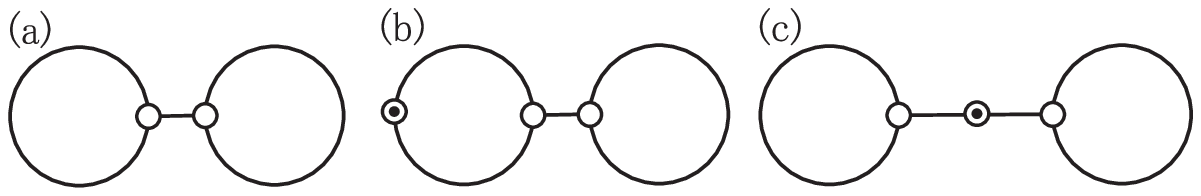}}
\vspace{5mm}
\caption{\label{fig15}}
\end{figure}

We call the subgraph as shown in Fig.~\ref{fig15}(a)
{\it a pair of eyeglasses}.
We call the subgraphs as shown in Fig.~\ref{fig15}(b) and (c)
 {\it pairs of skew eyeglasses of type $1$ and $2$} respectively.

\begin{lemma}
\label{LemmaNoGlasses}
Let $G$ be a pair of eyeglasses in a minimal chart $\Gamma$ with $w(\Gamma)=7$.
Let $D_1,D_2$ be the associated disks of loops in $G$.
If $\Gamma$ is locally minimal with respect to the disks $D_1$ and $D_2$,
then 
$w(\Gamma\cap${\rm Int}$D_1)=2$ and $w(\Gamma\cap${\rm Int}$D_2)=2$.
\end{lemma}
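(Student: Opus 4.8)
The plan is to suppose, for contradiction, that the conclusion fails, and then count white vertices using the structure of a pair of eyeglasses together with the results already established. Recall that a pair of eyeglasses $G$ (Figure~\ref{fig15}(a)) is a connected component of some $\Gamma_m$ consisting of two loops $\ell_1,\ell_2$ sharing nothing, joined by an edge, with one white vertex on each loop and one white vertex (say $w_0$) in the middle; so $w(G)=3$ and both $\ell_1,\ell_2\subset\Gamma_m$. Let $D_1,D_2$ be the associated disks. Since $G$ is connected and embedded in $S^2$, the disks $D_1$ and $D_2$ are disjoint, and each lies in $S^2-D_j$ for $j\ne i$. Also the middle white vertex $w_0$ and the three white vertices of $G$ all lie outside both $\mathrm{Int}\,D_1$ and $\mathrm{Int}\,D_2$ (the edge joining the two loops does not enter either associated disk, by the definition of associated disk). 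By Lemma~\ref{LoopTwoVertices}(1) we already know $w(\Gamma\cap\mathrm{Int}\,D_i)\ge 2$ for $i=1,2$.

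The first step is to observe that, since $\mathrm{Int}\,D_1$ and $\mathrm{Int}\,D_2$ are disjoint and disjoint from $G$, and $w(G)=3$, we get
$$
7=w(\Gamma)\ge w(G)+w(\Gamma\cap\mathrm{Int}\,D_1)+w(\Gamma\cap\mathrm{Int}\,D_2)\ge 3+2+2=7 .
$$
Hence equality holds everywhere, which forces $w(\Gamma\cap\mathrm{Int}\,D_1)=w(\Gamma\cap\mathrm{Int}\,D_2)=2$ and, incidentally, that there are no white vertices of $\Gamma$ outside $G\cup D_1\cup D_2$. So the statement is essentially a bookkeeping consequence of Lemma~\ref{LoopTwoVertices}(1) once one checks that the relevant pieces are disjoint and that $G$ genuinely contains three white vertices not lying in either open associated disk.

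The main obstacle, therefore, is not the counting but verifying the disjointness/location claims carefully: one must confirm that the connecting edge of the pair of eyeglasses, the central white vertex $w_0$, and the two white vertices on the loops all lie in $S^2-(\mathrm{Int}\,D_1\cup\mathrm{Int}\,D_2)$. For the white vertex on $\ell_i$, note it lies on $\partial D_i$, hence not in $\mathrm{Int}\,D_i$, and it is joined to $w_0$ by an edge of $\Gamma_m$ lying outside $D_i$ (that is exactly how the associated disk is defined — it is the side not containing the edge $e$ through the white vertex), so it is not in $\mathrm{Int}\,D_j$ either because $D_j$ is on the far side. For $w_0$: it is the common endpoint of edges lying outside both $D_1$ and $D_2$, so $w_0\notin\mathrm{Int}\,D_1\cup\mathrm{Int}\,D_2$. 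Finally, $\mathrm{Int}\,D_1\cap\mathrm{Int}\,D_2=\emptyset$ because $\partial D_2=\ell_2$ lies outside $D_1$ (it is part of the connected graph $G$ meeting $D_1$ only along $\ell_1\subset\partial D_1$), so $D_2$ lies entirely in one complementary region of $\partial D_1$, and that region must be $S^2-D_1$ since $D_2$ does not contain $\ell_1$.

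With those three observations in hand, the displayed inequality yields the result immediately; there is nothing further to compute. I would present the proof in exactly this order: (1) set up $G$, $\ell_i$, $D_i$, and record that $w(G)=3$; (2) prove $\mathrm{Int}\,D_1$, $\mathrm{Int}\,D_2$, and $G$ are pairwise disjoint and contain the three white vertices of $G$ nowhere in the two open disks; (3) invoke Lemma~\ref{LoopTwoVertices}(1); (4) run the count and extract equality.
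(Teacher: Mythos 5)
Your argument breaks down at the very first step: a pair of eyeglasses does \emph{not} contain three white vertices. In $\Gamma_m$ every white vertex has exactly three edge-ends of label $m$; for each loop, two of these are consumed by the loop itself, so the third edge of $\Gamma_m$ at each loop-vertex is the single connecting edge, which therefore joins the two loop-vertices \emph{directly}. There is no middle white vertex $w_0$, and $w(G)=2$, not $3$. (The configuration you describe --- two loops joined through a central white vertex --- is the pair of skew eyeglasses of type $2$ in Fig.~\ref{fig15}(c), which is treated separately in Proposition~\ref{NoSkewGlassesType2}.) You can see $w(G)=2$ confirmed in the paper's own count: it derives only $w(\Gamma\cap\mathrm{Int}\,D_1)+w(\Gamma\cap\mathrm{Int}\,D_2)\le 7-w(G)=5$, not $\le 4$.

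With the correct count your inequality reads $7\ge 2+w(\Gamma\cap\mathrm{Int}\,D_1)+w(\Gamma\cap\mathrm{Int}\,D_2)\ge 2+2+2=6$, so equality is not forced and the cases $(w(\Gamma\cap\mathrm{Int}\,D_1),w(\Gamma\cap\mathrm{Int}\,D_2))=(2,3)$ and $(3,2)$ survive. Eliminating these is the entire substance of the paper's proof and is far from bookkeeping: one puts the pseudo chart of Fig.~\ref{fig05}(a) into the disk with two interior vertices via Lemma~\ref{LoopTwoVertices}(2), applies Lemma~\ref{CorThreeRed} to the disk with three, and then rules out each of the four resulting pseudo charts of Fig.~\ref{fig06} using Lemma~\ref{Table}(3) and a vertex count, IO-Calculations, Proposition~\ref{NoSolarEclipse}, Lemma~\ref{NoLens}, and an orientation contradiction at a shared white vertex. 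None of that is recoverable from your outline, so the proposal as written does not prove the lemma.
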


\begin{proof}
Since $w(\Gamma\cap${\rm Int}$D_1)+w(\Gamma\cap${\rm Int}$D_2)+w(G)\le w(\Gamma)=7$,
we have $w(\Gamma\cap${\rm Int}$D_1)+w(\Gamma\cap${\rm Int}$D_2)\le 5$.
Thus by Lemma~\ref{LoopTwoVertices}(1)
we have 
\begin{center}
$(w(\Gamma\cap${\rm Int}$D_1),w(\Gamma\cap${\rm Int}$D_2))=(2,2),(2,3)$ or $(3,2)$.
\end{center}

Suppose that $(w(\Gamma\cap${\rm Int}$D_1),w(\Gamma\cap${\rm Int}$D_2))=(2,3)$.
Then by $w(\Gamma)=7$, we have
\begin{enumerate}
\item[(1)]  $w(\Gamma\cap(S^2-(D_1\cup D_2)))=0$.
\end{enumerate}
Further
by Lemma~\ref{LoopTwoVertices}(2)
the disk $D_1$ contains a pseudo chart of the RO-family of the pseudo chart as shown in Fig.~\ref{fig05}(a) (see Fig.~\ref{fig16}(a)). 
Without loss of generality
we can assume that the edge $e_1$ of $\Gamma_m$ is oriented from $w_2$ to $w_1$.
We use the notations as shown in Fig.~\ref{fig16}(a).
Hence 
\begin{enumerate}
\item[(2)] $w(\Gamma_{m+\varepsilon}\cap D_1)=3$.
\end{enumerate}

One of the two edges $a_{11},b_{11}$ of $\Gamma_{m+\varepsilon}$ in Fig.~\ref{fig16}(a) does not contain a middle arc at $w_1$,
one of the two edges is not a terminal edge
by Assumption~\ref{NoTerminal}.
Thus by (1),
there are three cases:
 $a_{11}=b_{11}, a_{11}\ni w_2$ or $b_{11}\ni w_2$.
But if $a_{11}=b_{11}$,
then $a_{11}\cup \ell_1$ is a solar eclipse.
This contradicts Proposition~\ref{NoSolarEclipse}.
Hence we have 
\begin{enumerate}
\item[(3)]
$w_2\in a_{11}\subset \Gamma_{m+\varepsilon}$
or $w_2\in b_{11}\subset \Gamma_{m+\varepsilon}$.
\end{enumerate}

Since $w(\Gamma\cap$Int$D_2)=3$,
by Lemma~\ref{CorThreeRed}
the disk $D_2$ contains a pseudo chart of the RO-families of the four pseudo charts as shown in Fig.~\ref{fig06}. 
By (3), for the label $k$ and $\mu\in\{+1,-1\}$ in Fig.~\ref{fig06}
we have $k=m$ and $\mu=\varepsilon$. 

Suppose that $D_2$ contains one of the two pseudo charts as shown in Fig.~\ref{fig06}(a) and (b).
Then we have $w(\Gamma_{m+3\varepsilon}\cap(S^2-D_2))\ge1$ by Lemma~\ref{Table}(3).
Further $S^2-D_2\supset D_1$ and (2) imply that
$w((\Gamma_{m+\varepsilon}\cup\Gamma_{m+3\varepsilon})\cap(S^2-D_2))\ge3+1=4$.
Thus we have $w(\Gamma\cap(S^2-D_2))\ge4$.
Hence 
\begin{center}
$w(\Gamma)= w(\ell_2)+w(\Gamma\cap$Int$D_2)+w(\Gamma\cap(S^2-D_2))\ge 1+3+4=8$.
\end{center}
This contradicts the fact $w(\Gamma)=7$.

Suppose that $D_2$ contains the pseudo chart as shown in Fig.~\ref{fig06}(c)
(see Fig.~\ref{fig16}(b)).
Now let $E_i$ be the 2-angled disk of $\Gamma_{m+\varepsilon}$ in $D_i$ for $i=1,2$. 
Since $(D_1\cup D_2)-(E_1\cup E_2)$ contains only one white vertex of $\Gamma_{m+2\varepsilon}$,
the condition (1) implies
$w(\Gamma_{m+2\varepsilon}\cap(S^2-(E_1\cup E_2)))=1$.
Hence we have a contradiction
 by IO-Calculation with respect to $\Gamma_{m+2\varepsilon}$ in $Cl(S^2-(E_1\cup E_2))$.

Suppose that $D_2$ contains the pseudo chart as shown in Fig.~\ref{fig06}(d) (see Fig.~\ref{fig16}(c)) where
\begin{enumerate}
\item[(d-1)]  none of $e_1^*,e_2^*,e_3^*$ in
Fig.~\ref{fig16}(c) contains a middle arc at $w_4^*$ or $w_5^*$
(by Assumption~\ref{NoTerminal},
none of $e_1^*,e_2^*,e_3^*$ is a terminal edge),
\item[(d-2)]
by (3)
all seven white vertices are contained in the same connected component of $\Gamma_{m+\varepsilon}$.
\end{enumerate}

{\bf Claim.}
All of $e_1^*,e_2^*,e_3^*$ contain white vertices in Int$D_1$.

For, by (1) and (d-1),
 for the edge $e_1^*$,
there are four cases:
 $e_1^*=e_4^*$, 
 $e_1^*=e_5^*$,
 the edge $e_1^*$ is a loop or
 the edge $e_1^*$ contains a white vertex in Int$D_1$.

If  $e_1^*=e_4^*$ or $e_1^*=e_5^*$,
the edge $e_1^*$ separates the disk $Cl(S^2-D_3)$ into two disks,
here $D_3$ is the $3$-angled disk of $\Gamma_{m+\varepsilon}$ in $D_2$ (see Fig.~\ref{fig16}(c)).
Let $D'$ be one of the two disks contains the edge $e_2^*$.
Then we have a contradiction by IO-Calculation with respect to  $\Gamma_{m+2\varepsilon}$ in $D'$.
If $e_1^*$ is a loop,
then the associated disk $D^*$ of $e_1^*$ contains $w_2$.
Further by (d-2),
the disk $D^*$ must contain the three white vertices in $D_1$.
Hence $S^2-(D_3\cup D^*)$ does not contain any white vertices.
Thus $e_3^*=e_4^*$ and $e_2^*=e_5^*$.
Hence there exists a lens of type $(m+\varepsilon,m+2\varepsilon)$.
This contradicts Lemma~\ref{NoLens}.
Therefore $e_1^*$ contains a white vertex in Int$D_1$.
Similarly we can show that 
each of the two edges $e_2^*,e_3^*$ contains a white vertex in Int$D_1$.
Thus Claim holds.

But 
 the three edges $e_1^*,e_2^*,e_3^*$ of $\Gamma_{m+2\varepsilon}$ are oriented outward at
the same white vertex in Int$D_1$.
This is a contradiction. 

Thus we have a contradiction for any cases.
Hence $(w(\Gamma\cap$Int$D_1),w(\Gamma\cap$Int$D_2))\not=(2,3)$.
In a similar way as above,
we can show that $(w(\Gamma\cap$Int$D_1),w(\Gamma\cap$Int$D_2))\not=(3,2)$.
Therefore $(w(\Gamma\cap$Int$D_1),w(\Gamma\cap$Int$D_2))=(2,2)$.
This completes the proof of Lemma~\ref{LemmaNoGlasses}.
\end{proof}

\begin{figure}[t]
\centerline{\includegraphics{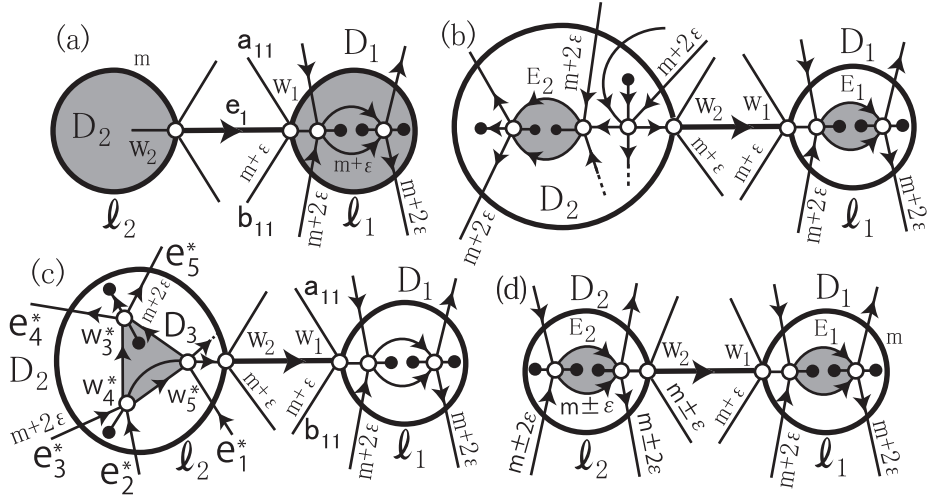}}
\vspace{5mm}
\caption{\label{fig16} (a) The gray regions are disks $D_1,D_2$.  (b), (d) The gray regions are disks $E_1,E_2$.
(c) The gray region is a $3$-angled disk $D_3$.}
\end{figure}


\begin{proposition}
\label{NoGlasses}
There is no pair of eyeglasses in a minimal chart with exactly seven white vertices.
\end{proposition}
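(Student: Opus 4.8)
The plan is to argue by contradiction, exactly paralleling the structure of the proof of Lemma~\ref{LemmaNoGlasses} but now pushing the counting through in the remaining case $(w(\Gamma\cap\mathrm{Int}D_1),w(\Gamma\cap\mathrm{Int}D_2))=(2,2)$. So suppose $G$ is a pair of eyeglasses in a minimal chart $\Gamma$ with $w(\Gamma)=7$; then $G$ consists of two loops $\ell_1\subset\Gamma_m$ and $\ell_2\subset\Gamma_m$ joined by an edge of $\Gamma_m$, sharing no white vertex, with white vertices $w_1\in\ell_1$ and $w_2\in\ell_2$, both lying in $\Gamma_{m+\varepsilon}$ for a common $\varepsilon\in\{+1,-1\}$ (the two loops are in the same connected component of $\Gamma_m$, so the sign is forced to be the same). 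We may assume $\Gamma$ is locally minimal with respect to the associated disks $D_1,D_2$ of $\ell_1,\ell_2$, and these disks are disjoint. By Lemma~\ref{LemmaNoGlasses} we have $w(\Gamma\cap\mathrm{Int}D_1)=w(\Gamma\cap\mathrm{Int}D_2)=2$, hence $w(\Gamma\cap(S^2-(D_1\cup D_2)))=7-1-2-2=2$, and $w((\Gamma_m\cup\Gamma_{m-1}\cup\Gamma_{m+1})\cap(S^2-(D_1\cup D_2)))$ is at most $2$.

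First I would apply Lemma~\ref{LoopTwoVertices}(2) to each of $D_1$ and $D_2$: each contains a pseudo chart of the RO-family of Fig.~\ref{fig05}(a), so inside $D_i$ there is a $2$-angled disk $E_i$ of $\Gamma_{m+\varepsilon}$ whose four outgoing edges of $\Gamma_{m+2\varepsilon}$ carry no middle arc at the two interior white vertices, hence none of them is a terminal edge. As in the proof of Lemma~\ref{Table}(1), these edges cannot all close up inside $D_i$ (that would force lenses of type $(m+\varepsilon,m+2\varepsilon)$, contradicting Lemma~\ref{NoLens}), so each $D_i$ emits edges of label $m+2\varepsilon$ out of $D_i$, and also has $w(\Gamma_{m+\varepsilon}\cap D_i)\ge 2$ of its white vertices lying in $\Gamma_{m+\varepsilon}$ — in fact, arguing as in (3) of Lemma~\ref{LemmaNoGlasses}, the interior white vertex of $E_i$ adjacent to $w_i$ is forced to lie in $\Gamma_{m+\varepsilon}$ (if $a_{ii}=b_{ii}$ we get a solar eclipse, contradicting Proposition~\ref{NoSolarEclipse}; if $a_{ii}$ or $b_{ii}$ were a loop or closed up otherwise we again produce a lens or solar eclipse). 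Thus all white vertices of $D_1\cup D_2\cup G$, i.e.\ at least five of the seven, lie in one connected component of $\Gamma_{m+\varepsilon}$, which already contains two loops (the two $\partial E_i$). Now I would count white vertices of $\Gamma_{m+2\varepsilon}$: each $E_i$ contributes edges of $\Gamma_{m+2\varepsilon}$ that must reach white vertices outside $E_1\cup E_2$; combining this with IO-Calculation with respect to $\Gamma_{m+2\varepsilon}$ in $Cl(S^2-(E_1\cup E_2))$ — as was done for a single loop in Lemma~\ref{Table}(1) — forces $w(\Gamma_{m+2\varepsilon}\cap(S^2-(E_1\cup E_2)))\ge 2$. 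These two white vertices lie outside $D_1\cup D_2$ (since $D_i-E_i$ contains no white vertex of $\Gamma_{m+2\varepsilon}$), so they account for the two exterior white vertices, which therefore both lie in $\Gamma_{m+2\varepsilon}$ and none of them lies in $\Gamma_m\cup\Gamma_{m\pm1}$ unless $\varepsilon$-labels collide — but $m+2\varepsilon\notin\{m-1,m,m+1\}$, so these are genuinely new, forcing $w(\Gamma)\ge 1+2+2+2=7$ with equality, and pinning down the labels of all seven white vertices.

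The final step, and the one I expect to be the main obstacle, is to extract a contradiction from this completely rigidified configuration: seven white vertices, five of which (those in $D_1\cup D_2\cup G$) lie in a single component $H$ of $\Gamma_{m+\varepsilon}$ carrying the two loops $\partial E_1,\partial E_2$, and the remaining two lying in $\Gamma_{m+2\varepsilon}$. The subgraph $H$ has five white vertices and two loops; I would analyze $H$ together with the edges of $\Gamma_m$ of $G$ it is "inscribed in" — the two loops $\partial E_i$ sit inside the loops $\ell_i$, so $\ell_1\cup\partial E_1$ and $\ell_2\cup\partial E_2$ behave like nested $1$-angled disks, and the remaining white vertex of $H$ outside $E_1\cup E_2$ must connect both $\partial E_1$ and $\partial E_2$ to the rest of $H$. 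This should be forced into being another pair of eyeglasses or a configuration already excluded (solar eclipse via Proposition~\ref{NoSolarEclipse}, or a lens via Lemma~\ref{NoLens}), or else an IO-Calculation with respect to $\Gamma_{m+\varepsilon}$ on an appropriate region bounded by $\ell_1,\ell_2$ and edges of $\Gamma_{m+2\varepsilon}$ fails to balance. Once the exterior region's $\Gamma_{m+2\varepsilon}$-structure and $H$'s internal structure are both written down explicitly, the inward/outward arc count at the shared white vertices of $H$ and at the two exterior $\Gamma_{m+2\varepsilon}$-vertices should be mutually incompatible, exactly as the three-outward-edges contradiction at the end of the Fig.~\ref{fig06}(d) case in Lemma~\ref{LemmaNoGlasses}. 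I would organize the endgame as a short case split on whether the exterior edges of $\Gamma_{m+2\varepsilon}$ from $E_1$ and from $E_2$ reach the same exterior white vertex or different ones, handling each by IO-Calculation; that case split, and keeping careful track of which edges carry middle arcs (via Assumption~\ref{NoTerminal}), is where the real work lies.
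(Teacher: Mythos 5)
Your proposal has a genuine gap, and it starts with a counting error that derails the whole endgame. A pair of eyeglasses $G$ has \emph{two} white vertices ($w_1\in\ell_1$ and $w_2\in\ell_2$, as you yourself state), both of which lie on $\partial D_1\cup\partial D_2\subset D_1\cup D_2$. So with $w(\Gamma\cap\mathrm{Int}D_1)=w(\Gamma\cap\mathrm{Int}D_2)=2$ from Lemma~\ref{LemmaNoGlasses}, the exterior count is $7-3-3=1$, not $7-1-2-2=2$; you are implicitly treating $G$ as a solar eclipse. This matters twice over. First, your intermediate claim that IO-Calculation with respect to $\Gamma_{m+2\varepsilon}$ in $Cl(S^2-(E_1\cup E_2))$ forces at least two white vertices of $\Gamma_{m+2\varepsilon}$ outside $D_1\cup D_2$ would, with the correct count, already be a contradiction ($2\le 1$) --- so the long ``rigidified configuration'' analysis you sketch is chasing a phantom case. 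Second, that analysis is never actually carried out: the final paragraph is a plan (``should be forced into\dots'', ``is where the real work lies''), not a derivation, so even on its own terms the proof is incomplete.

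Two further points. Your assertion that $w_1$ and $w_2$ lie in $\Gamma_{m+\varepsilon}$ for a \emph{common} $\varepsilon$ because they lie in the same component of $\Gamma_m$ does not follow from anything in the paper; the paper's proof explicitly splits into the cases $w_2\in\Gamma_{m+\varepsilon}$ and $w_2\in\Gamma_{m-\varepsilon}$, running the IO-Calculation in $Cl(S^2-(E_1\cup E_2))$ in the first case and in $Cl(S^2-E_1)$ in the second. For comparison, the paper's actual route is short once the exterior vertex $w'$ is known to be unique: $w'\notin\Gamma_m$ by Lemma~\ref{ComponentTwoWhite}; $w'\notin\Gamma_{m\pm2\varepsilon}$ by the IO-Calculations just mentioned; hence $w'\notin\Gamma_{m\pm\varepsilon}$ (a white vertex of $\Gamma_{k}$ lies in $\Gamma_{k-1}$ or $\Gamma_{k+1}$); so $w'$ is the unique white vertex of some $\Gamma_k$ with $k\ne m,m\pm\varepsilon,m\pm2\varepsilon$, contradicting Lemma~\ref{ComponentTwoWhite} again. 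Your instinct to use the $E_1\cup E_2$ IO-Calculation is the right one, but the correct exterior count is what lets it close the argument.
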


\begin{proof}
Suppose that there exists a pair of eyeglasses $G$ in a minimal chart $\Gamma$ with $w(\Gamma)=7$. 
Without loss of generality
we can assume that $\Gamma$ is locally minimal with respect to the associated disks $D_1$ and $D_2$ of loops in $G$.
By Lemma~\ref{LemmaNoGlasses},
we have 
 $(w(\Gamma\cap$Int$D_1),w(\Gamma\cap$Int$D_2))=(2,2)$.
By Lemma~\ref{LoopTwoVertices}(2)
each of the disks $D_1$ and $D_2$ contains a pseudo chart of the RO-family of the pseudo chart as shown in Fig.~\ref{fig05}(a) (see Fig.~\ref{fig16}(d)).
Moreover there exists exactly one white vertex in $S^2-(D_1\cup D_2)$,
say $w'$.
 
If $w'\in\Gamma_{m}$,
then this contradicts Lemma~\ref{ComponentTwoWhite}. 
Thus $w'\not\in\Gamma_m$.

If $w'\in\Gamma_{m+2\varepsilon}$,
then there exists only one white vertex $w'$
of $\Gamma_{m+2\varepsilon}$ in $S^2-(E_1\cup E_2)$
where $E_i$ is the 2-angled disk $E_i$ in $D_i$ for $i=1,2$. 
Hence if $w_2\in\Gamma_{m+\varepsilon}$ (resp. $w_2\not\in\Gamma_{m+\varepsilon}$, i.e. $w_2\in\Gamma_{m-\varepsilon}$)
then we have a contradiction by IO-Calculation with respect to $\Gamma_{m+2\varepsilon}$ in
 $Cl(S^2-(E_1\cup E_2))$ (resp. $Cl(S^2-E_1)$).
Thus $w'\not\in\Gamma_{m+2\varepsilon}$.

Similarly we can show $w'\not\in\Gamma_{m-2\varepsilon}$.
Now $w'\not\in\Gamma_m\cup\Gamma_{m\pm2\varepsilon}$ implies $w'\not\in\Gamma_{m\pm\varepsilon}$,
because 
a white vertex in $\Gamma_k$
is also in $\Gamma_{k-1}$ or $\Gamma_{k+1}$. 
Hence the vertex $w'$ is contained in $\Gamma_k$ for some label $k$ with $k\not=m,m\pm\varepsilon,m\pm2\varepsilon$.
However $\Gamma_k$ contains exactly one white vertex $w'$.
This contradicts Lemma~\ref{ComponentTwoWhite}. 
Therefore there is no pair of eyeglasses in $\Gamma$.
\end{proof}


\begin{proposition}
\label{NoSkewGlassesType2}
There is no pair of skew eyeglasses of type $2$ in a minimal chart with exactly seven white vertices.
\end{proposition}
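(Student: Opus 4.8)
The plan is to argue by contradiction in the same spirit as the proofs of Proposition~\ref{NoSolarEclipse} and Proposition~\ref{NoGlasses}. Suppose a minimal chart $\Gamma$ with $w(\Gamma)=7$ contains a pair of skew eyeglasses $G$ of type $2$; by Lemma~\ref{TwoThreeLoop} this $G$ is a connected component of some $\Gamma_m$ consisting of two loops $\ell_1,\ell_2$ of label $m$ sharing a structure as in Fig.~\ref{fig15}(c), and $w(G)=3$. Let $D_1,D_2$ be the associated disks of $\ell_1,\ell_2$, and without loss of generality assume $\Gamma$ is locally minimal with respect to both. The first step is a counting observation: since the three white vertices of $G$ itself lie outside $\mathrm{Int}\,D_1\cup\mathrm{Int}\,D_2$ (the shared white vertices sit on the loops, and the "bridge" configuration of type $2$ places its white vertex outside both associated disks), we get $w(\Gamma\cap\mathrm{Int}\,D_1)+w(\Gamma\cap\mathrm{Int}\,D_2)\le 7-3=4$, so by Lemma~\ref{LoopTwoVertices}(1) the only possibility is $w(\Gamma\cap\mathrm{Int}\,D_1)=w(\Gamma\cap\mathrm{Int}\,D_2)=2$.

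Next I would apply Lemma~\ref{LoopTwoVertices}(2) to each of $D_1$ and $D_2$: each contains a pseudo chart of the RO-family of Fig.~\ref{fig05}(a), with a $2$-angled disk $E_i\subset D_i$ of $\Gamma_{m+\varepsilon_i}$ inside it and four edges of $\Gamma_{m+2\varepsilon_i}$ not containing middle arcs at the two interior white vertices (hence not terminal by Assumption~\ref{NoTerminal}). Because $G$ is connected with a white vertex in $\Gamma_m$ attached to both loops and to the extra white vertex $w'$ of the skew configuration, I expect to pin down the signs and force $\varepsilon_1=\varepsilon_2=:\varepsilon$ by tracking where edges of $\Gamma_{m\pm\varepsilon}$ can terminate, using Lemma~\ref{ComponentTwoWhite} to rule out short components and Proposition~\ref{NoSolarEclipse} to rule out any degeneration that would create a second loop sharing the single interior white vertex. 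Then the bookkeeping is: the interiors of $D_1,D_2$ carry $4$ white vertices of $\Gamma_{m+\varepsilon}$-type material, $w'$ is the seventh white vertex, and every edge of $\Gamma_{m+2\varepsilon}$ emanating from $\mathrm{Int}\,D_i$ must be accounted for. The key tool here is IO-Calculation with respect to $\Gamma_{m+2\varepsilon}$ in $Cl(S^2-(E_1\cup E_2))$ (or in a single $Cl(S^2-E_i)$ when only one side contributes), exactly as in Lemma~\ref{LemmaNoGlasses} and Proposition~\ref{NoGlasses}: the four non-terminal edges of $\Gamma_{m+2\varepsilon}$ leaving the $E_i$ cannot close up among themselves without producing a lens of type $(m+\varepsilon,m+2\varepsilon)$, contradicting Lemma~\ref{NoLens}, nor can they all be absorbed by $w'$ since $w'$ has only three outward (and three inward) arcs and the orientations are forced the wrong way.

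The remaining step is to locate $w'$: as in the proof of Proposition~\ref{NoGlasses}, show $w'\notin\Gamma_m$ (Lemma~\ref{ComponentTwoWhite}), $w'\notin\Gamma_{m+2\varepsilon}$ and $w'\notin\Gamma_{m-2\varepsilon}$ (IO-Calculation), hence $w'\notin\Gamma_{m\pm\varepsilon}$, and finally $w'\in\Gamma_k$ for some $k$ disjoint from $\{m,m\pm\varepsilon,m\pm2\varepsilon\}$ — but then $\Gamma_k$ has a lone white vertex, contradicting Lemma~\ref{ComponentTwoWhite}. I expect the main obstacle to be the sign analysis in the middle step: the skew (rather than straight) attachment of the two loops means the orientation of the edge of $\Gamma_m$ through the shared white vertex interacts differently with the two associated disks than in the ordinary eyeglasses case, so I must check carefully — using Condition (iii) for charts at the shared white vertex together with the orientations dictated by Fig.~\ref{fig05}(a) inside each $D_i$ — that no choice of $(\varepsilon_1,\varepsilon_2)$ and of the orientation of $\ell_1\cup\ell_2$ escapes the IO-Calculation contradiction; type $2$ presumably forces a specific incompatibility that type $1$ does not, which is why it is handled in its own proposition.
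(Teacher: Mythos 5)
Your opening count is correct and agrees with the paper: since $w(G)=3$ for a pair of skew eyeglasses, Lemma~\ref{LoopTwoVertices}(1) forces $w(\Gamma\cap\mathrm{Int}\,D_1)=w(\Gamma\cap\mathrm{Int}\,D_2)=2$, and hence $w(\Gamma\cap(S^2-(G\cup D_1\cup D_2)))=0$. But your endgame has a genuine gap. You plan to finish as in Proposition~\ref{NoGlasses} by locating a ``seventh'' vertex $w'$ and showing it can lie in no $\Gamma_k$. There is no such vertex here: the seven white vertices are exhausted by the two loop vertices, the four interior vertices, and the third vertex of $G$ itself. That third vertex is a vertex of the connected component $G$ of $\Gamma_m$, so it certainly lies in $\Gamma_m$, and Lemma~\ref{ComponentTwoWhite} is satisfied ($G$ has three white vertices); the chain ``$w'\notin\Gamma_m$, $w'\notin\Gamma_{m\pm2\varepsilon}$, \dots, contradiction'' simply does not start. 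The eyeglasses ending transplants to type $1$ and type $2$ skew eyeglasses only in the sense that the counting works; the contradiction must come from somewhere else.

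The paper's actual mechanism, which your sketch does not reach, is in two parts. First, a five-way case analysis on the edges $a_{33},b_{33}$ of $\Gamma_{m+\varepsilon}$ at the middle vertex $w_3$ (ruled out one by one via Lemma~\ref{NoLens}, an IO-Calculation in a small disk, and Proposition~\ref{NoSolarEclipse}) forces $a_{33}=b_{11}$ and $b_{33}=a_{22}$, so the middle vertex is joined by $\Gamma_{m+\varepsilon}$-edges to both loop vertices and $a_{11},b_{22}$ become terminal. Second, with Fig.~\ref{fig05}(a) installed in each disk by Lemma~\ref{LoopTwoVertices}(2), there are four edges $e_1',\dots,e_4'$ of $\Gamma_{m+2\varepsilon}$ meeting $\partial D_1$; Lemma~\ref{NoLens} makes them pairwise distinct, so each must reach a white vertex in $\mathrm{Int}\,D_2$, and four pairwise disjoint such arcs cannot be embedded in the complement --- a planarity obstruction, not an IO-Calculation. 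Your proposed IO-Calculation in $Cl(S^2-(E_1\cup E_2))$ and your remark that $w'$ has only three outward arcs gesture toward this bottleneck, but note that the third vertex of $G$ lies in $\Gamma_m\cap\Gamma_{m\pm1}$ and so cannot absorb edges of $\Gamma_{m+2\varepsilon}$ at all; the decisive step is the embedding contradiction, which your plan would still need to supply.
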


\begin{proof}
Suppose that there exists a pair of skew eyeglasses $G$ of type $2$ in a minimal chart $\Gamma$ with $w(\Gamma)=7$.  
We only show that the edge $e_3$ of $\Gamma_m$ is oriented inward at $w_3$.
We use the notations as shown in 
Fig.~\ref{fig17}(a).
By Lemma~\ref{LoopTwoVertices}(1), 
the condition $w(\Gamma)=7$
implies  $(w(\Gamma\cap$Int$D_1),w(\Gamma\cap$Int$D_2))=(2,2)$.
Thus
\begin{enumerate} 
\item[(i)] $w(\Gamma\cap(S^2-(G\cup D_1\cup D_2)))=0$.
\end{enumerate}
For the edge $a_{33}$ of $\Gamma_{m+\varepsilon}$ in Fig.~\ref{fig17}(a),
there are five cases:
(1) $a_{33}=a_{11}$, 
(2) $a_{33}=b_{11}$,
(3) $a_{33}=a_{22}$,
(4) $a_{33}=b_{22}$,
(5) $a_{33}$ is a loop.

{\bf Case (1).}
The union $a_{33}\cup e_1$ bounds a lens.
This contradicts Lemma~\ref{NoLens}.

{\bf Case (3).}
We have $b_{33}=b_{22}$.
Thus $b_{33}\cup e_2$ bounds a lens.
This contradicts Lemma~\ref{NoLens}.

{\bf Case (4).}
The union $a_{33}\cup e_2$ bounds a disk $D'$ containing the edge $b_{33}$.
By (i),
we have $w(\Gamma\cap$Int$D')=0$.
Thus we have a contradiction 
by IO-Calculation with respect to $\Gamma_{m+\varepsilon}$ in $D'$.

{\bf Case (5).}
We have $b_{33}\ni w_2$
because the edge $b_{33}$ does not contain a middle arc at $w_3$
(the edge $b_{33}$ is not a terminal edge 
by Assumption~\ref{NoTerminal}).
Thus $w_2\in \Gamma_{m+\varepsilon}$.
Hence whether $w_1\in\Gamma_{m-\varepsilon}$ or $w_1\in\Gamma_{m+\varepsilon}$,
we have $a_{11}=b_{11}$,
because one of $a_{11}$ and $b_{11}$ does not contain a middle arc.
Thus $a_{11}\cup \ell_1$ is a solar eclipse.
This contradicts Proposition~\ref{NoSolarEclipse}.

Hence Case (2) occurs.
Similarly we can show $b_{33}=a_{22}$.
Thus the two edges $a_{11},b_{22}$ are terminal edges.
By Lemma~\ref{LoopTwoVertices}(2),
both of $D_1$ and $D_2$ 
contain pseudo charts of the RO-family of the pseudo chart as shown in Fig.~\ref{fig05}(a) 
(see Fig.~\ref{fig17}(b)).
There are four edges of $\Gamma_{m+2\varepsilon}$ intersecting $\partial D_1$, say $e_1',e_2',e_3',e_4'$.
By Lemma~\ref{NoLens}, 
we have $e_i'\not=e_j'$ for $i\not=j$.
Thus the four edges must contain white vertices in Int$D_2$.
However it is impossible that Int$(e_i')\cap $Int$(e_j')=\emptyset$ for each pair $i,j$ with $1\le i<j\le4$.
Therefore there is no pair of skew eyeglasses of type $2$ in $\Gamma$.
\end{proof}

\begin{figure}[t]
\centerline{\includegraphics{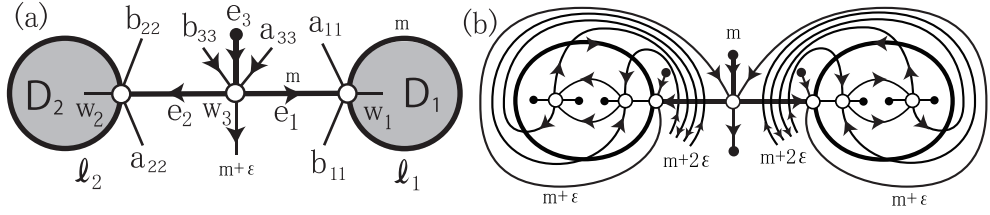}}
\vspace{5mm}
\caption{\label{fig17}
The gray regions are disks $D_1,D_2$, $m$ is a label, and $\varepsilon=\pm1$.}
\end{figure}

\section{{\large Triangle Lemma}}
\label{s:TriangleLemma}

Let $\Gamma$ be a chart, and $D$ a disk.
Let $\alpha$ be a simple arc in $\partial D$.
We call a simple arc $\gamma$ in an edge of $\Gamma_k$
a {\it {$(D,\alpha)$-arc}} of label $k$
provided that 
$\partial \gamma \subset $Int$\alpha$
and
Int$\gamma\subset $Int$D$. 
If there is no $(D,\alpha)$-arc in $\Gamma$,
then the chart $\Gamma$ is said to be
$(D,\alpha)$-{\it arc free}.

Let $\Gamma$ be a chart and 
$D$ a disk.
Let $\alpha$ be a simple arc in $\partial D$.
For each $k=1,2,\cdots$, 
let $\Sigma_k$ be the pseudo chart 
which consists of all arcs in $D\cap \Gamma_k$ 
intersecting the set $Cl(\partial D-\alpha)$.
Let $\Sigma_\alpha={\cup_k \Sigma_k}$.

The following two lemmata will be used in the proof of Lemma~\ref{LemmaTriangle}.

\begin{lemma}
{\em (\cite[Lemma 3.2]{ChartAppl})} 
$($Disk Lemma$)$
\label{DiskLemma}
Let $\Gamma$ be a minimal chart and
$D$ a disk.
Let $\alpha$ be a simple arc in $\partial D$.
Suppose that the interior of $\alpha$ contains neither white vertices, 
isolated points of $D\cap \Gamma$, nor arcs of $D \cap \Gamma$. 
If {\em Int}$D$ does not contain white vertices of $\Gamma$,
then for any neighborhood $V$ of $\alpha$, 
there exists a $(D,\alpha)$-arc free minimal chart $\Gamma'$ obtained from the chart $\Gamma$
by C-moves in $V\cup D$ keeping $\Sigma_\alpha$ fixed $($see Fig.~\ref{fig18}$)$.
\end{lemma}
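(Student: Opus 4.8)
The plan is to strip off the $(D,\alpha)$-arcs one at a time, arguing by induction on the number of $(D,\alpha)$-arcs of $\Gamma$. If there are none, take $\Gamma'=\Gamma$. Otherwise, among all $(D,\alpha)$-arcs $\gamma$, let $\beta$ be the subarc of $\alpha$ with $\partial\beta=\partial\gamma$ and let $D_0\subset D$ be the disk bounded by $\gamma\cup\beta$, and choose $\gamma$ so that $D_0$ is minimal with respect to inclusion. It is then enough to produce, by C-moves in $V\cup D$ keeping $\Sigma_\alpha$ fixed, a minimal chart with exactly one $(D,\alpha)$-arc fewer, since the induction hypothesis disposes of that chart.

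The technical core is to bring $D_0$ into a normal form in which $\Gamma\cap\mathrm{Int}D_0$ consists only of arcs running from $\gamma$ to $\mathrm{Int}\beta$, possibly crossing one another. First, $\mathrm{Int}D_0$ contains no white vertex, by hypothesis. Second, it contains no closed component of $\Gamma$ disjoint from $\gamma$: such a component would be a simple hoop, or a ring with a complementary domain free of white vertices, contradicting Assumption~\ref{NoSimpleHoop} or Assumption~\ref{AssumptionRing} (a closed component that meets $\gamma$ is already one of the arcs of the normal form). Third, a black vertex in $\mathrm{Int}D_0$ would lie on a terminal edge, since no free edge survives by Assumption~\ref{NoSimpleHoop}; a terminal edge carries no crossing by Assumption~\ref{NoTerminal}, hence cannot meet $\gamma$, so it enters $D_0$ only through $\mathrm{Int}\beta$, and its portion in $D_0$ is a stub which I push out of $D$ by C-moves in a neighborhood of that stub. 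Fourth, the minimality of $D_0$ forbids a $(D,\alpha)$-arc whose cut-off disk lies properly inside $D_0$, and any arc of $\Gamma$ meeting $\gamma$ in two points and bounding a disk with a subarc of $\gamma$ is removed, innermost first, by a C-I-R2 move. Each of these operations is a C-move in a regular neighborhood of $D_0$ lying in $V\cup D$, keeps $\Sigma_\alpha$ fixed, creates no new $(D,\alpha)$-arc, and lowers the number of crossings or of black vertices of $\Gamma$ in $\mathrm{Int}D$ without raising the other; so after finitely many of them $D_0$ is in normal form.

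With $D_0$ in normal form, $\gamma$ is an unobstructed arc spanning $\beta$ over a family of transverse strands, each of which leaves $D$ through $\mathrm{Int}\beta$, and I would slide $\gamma$ down through $D_0$ and just past $\beta$ inside a thin regular neighborhood of $D_0$ contained in $V\cup D$ and containing no white vertex (there are none in $\mathrm{Int}D$, and $V$ may be taken small along $\mathrm{Int}\alpha$): passing $\gamma$ over a strand that it already met is merely an isotopy at the transverse point, and passing it over a crossing of two strands is a C-I-R3 or C-I-R4 move, the strands --- and in particular any part of $\Sigma_\alpha$ running through $D_0$ --- being kept fixed throughout. The resulting chart is C-move equivalent to $\Gamma$, and since every move used is of C-I or C-III type and so changes neither $w(\cdot)$ nor $f(\cdot)$, it has the same complexity as $\Gamma$ and is therefore minimal; it has one $(D,\alpha)$-arc fewer, and the induction closes. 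The real obstacle lies in the middle step: one must check that each cleanup move is a legitimate C-move, that it strictly lowers the chosen complexity, and that it is compatible both with the fixed pseudo chart $\Sigma_\alpha$ and with the hypotheses on $\alpha$ --- in particular the bookkeeping for vertices of $\Gamma$ inside $D_0$ other than crossings; once $D_0$ is clean, the final push of $\gamma$ across $\alpha$ is essentially a Reidemeister-type manipulation.
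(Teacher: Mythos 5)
This lemma is imported verbatim from \cite[Lemma 3.2]{ChartAppl} and the present paper gives no proof of it, so there is nothing internal to compare against; measured against the cited source, your innermost-disk induction (choose a $(D,\alpha)$-arc $\gamma$ cutting off a minimal disk $D_0$, clean $D_0$ of closed components, black-vertex stubs and bigons using Assumptions 2--5, then sweep $\gamma$ across $\beta$ by C-I-R2/R3 moves) is exactly the standard argument and is essentially correct. The only point you gloss over is the one you yourself flag: removing a bigon between a strand and $\gamma$ is not a single C-I-R2 move when other strands pass through it, and must itself be done by the same sweep (isotopy plus C-I-R3 at interior crossings) before the final C-I-R2; with that spelled out, the induction closes as you describe, and minimality is preserved since none of the moves changes $w(\Gamma)$ or $f(\Gamma)$.
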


\begin{figure}[t]
\centerline{\includegraphics{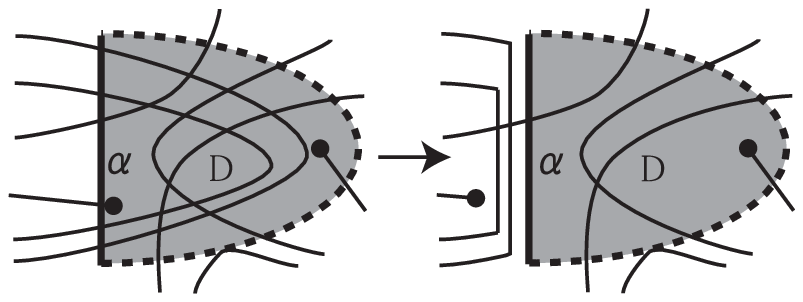}}
\vspace{5mm}
\caption{\label{fig18}}
\end{figure}

\begin{lemma}
\label{MM+2Edge}
Let $\Gamma$ be a chart,
$e$ an edge of $\Gamma_m$,
and $w_1,w_2$ the white vertices in $e$.
Suppose $w_1\in\Gamma_{m-1}$ and $w_2\in\Gamma_{m+1}$.
Then for any neighborhood $V$ of the edge $e$, there exists a chart $\Gamma'$
obtained from the chart $\Gamma$ by C-I-R2 moves, C-I-R3 moves and C-I-R4 moves
in $V$ keeping $\Gamma_{m-1}\cup\Gamma_m\cup\Gamma_{m+1}$
such that the edge $e$ does not contain any crossings.
\end{lemma}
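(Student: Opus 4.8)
The plan is to remove crossings from the edge $e$ one at a time, working from one end of $e$ toward the other. Since $e$ is an edge of $\Gamma_m$ with white vertices $w_1\in\Gamma_{m-1}$ and $w_2\in\Gamma_{m+1}$, a crossing on $e$ is a transverse intersection of $e$ with an edge $f$ of label $h$ satisfying $|h-m|\ge 2$. First I would list exactly which C-moves are available: a C-I-R2 move slides a crossing along $e$ past another crossing (when the two crossing edges have labels differing by at least $2$), a C-I-R3 move pushes a crossing past a white vertex of $\Gamma_{m\pm1}$ that lies on an adjacent edge (the ``far'' white vertex whose label does not interact with $h$), and a C-I-R4 move pushes a crossing past a black vertex. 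The key observation is that near $w_1$ the only labels occurring on the six incident arcs are $m-1$ and $m$ (and near $w_2$, only $m+1$ and $m$), so an edge $f$ of label $h$ with $|h-m|\ge 2$ never has an endpoint at $w_1$ or $w_2$, and moreover $h$ also differs from $m-1$ by at least $1$ or from $m+1$ by at least $1$ — this is what allows the crossing to be slid past the relevant white vertices by C-I-R3.

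The main steps are as follows. Choose an orientation of $e$, say from $w_1$ to $w_2$, and let $c$ be the crossing on $e$ closest to $w_2$, lying on an edge $f$ of label $h$ with $|h-m|\ge2$. I would push $c$ along $e$ toward $w_2$. Along the way it may meet other crossings of $e$ — but any such crossing lies on an edge of label $h'$ with $|h'-m|\ge 2$; if $|h-h'|\ge 2$ we slide past by C-I-R2, and if $|h-h'|\le1$ we first note this cannot create an obstruction because we can instead push $c$ to the other side, i.e. the order in which crossings sit on $e$ can be rearranged, so WLOG $c$ is genuinely the last crossing before $w_2$. Now $c$ meets $w_2$: the edge $f$ of label $h$ crosses $e$ just before the white vertex $w_2$ of $\Gamma_m\cap\Gamma_{m+1}$; since $|h-m|\ge2$ and $|h-(m+1)|\ge1$, a C-I-R3 move pushes $f$ across $w_2$, eliminating the crossing $c$ (the edge $f$ now runs on the far side of $w_2$, entirely outside a neighborhood of $e$). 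If instead the obstruction before reaching $w_2$ is a black vertex on a terminal edge meeting $e$, a C-I-R4 move handles it. Iterating, all crossings are removed, and all the moves are supported in an arbitrarily small neighborhood $V$ of $e$ and only involve edges of labels $\ne m-1,m,m+1$ passing through, so $\Gamma_{m-1}\cup\Gamma_m\cup\Gamma_{m+1}$ is kept fixed.

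The step I expect to be the main obstacle is the bookkeeping needed to guarantee that one can always arrange for the crossing being eliminated to be the one adjacent to a white vertex, rather than being trapped between two crossings whose labels differ by at most $1$ (so that C-I-R2 does not directly apply). The resolution is that for such a pair of edges $f$ (label $h$) and $f'$ (label $h'$) with $|h-h'|\le1$, both crossing $e$, one shows that the local picture near their two crossings with $e$ together with a subarc of $e$ forms a configuration where one may instead route $f$ on the opposite side; more carefully, one processes crossings in order of, say, the label value $|h-m|$, largest first, so that when we move a crossing on an edge of label $h$ every other crossing still on $e$ lies on an edge of label $h'$ with $|h'-m|\le|h-m|$, and one checks that this is enough for every obstruction to be passable by one of C-I-R2, C-I-R3, C-I-R4. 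A secondary technical point is verifying that no C-I-R3 move is blocked by the white vertex being a vertex of $\Gamma_{m-1}$ or $\Gamma_{m+1}$ on a middle arc of $e$ — but the hypotheses $w_1\in\Gamma_{m-1}$, $w_2\in\Gamma_{m+1}$ pin down exactly which arcs at $w_1$ and $w_2$ are middle arcs, so this is a finite check.
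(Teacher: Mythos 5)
Your plan has a genuine gap at the step you treat as routine: the claim that, since $|h-m|\ge2$ and $|h-(m+1)|\ge1$, the edge $f$ of label $h$ can be pushed across the white vertex $w_2\in\Gamma_m\cap\Gamma_{m+1}$. The bound $|h-(m+1)|\ge1$ is not sufficient. Pushing $f$ over $w_2$ forces $f$ to cross the arcs of label $m+1$ incident to $w_2$, and by Condition~(iv) of the definition of a chart a crossing requires the two labels to differ by \emph{more than} $1$. So for $h=m+2$ the move is simply unavailable at $w_2$, and your strategy of driving every crossing toward $w_2$ (or any single end) breaks down; symmetrically, a crossing of label $m-2$ cannot be pushed past $w_1\in\Gamma_{m-1}\cap\Gamma_m$. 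This is not the ``secondary technical point'' you defer to a finite check --- it is the case where the check fails, and it is the reason the hypothesis records \emph{which} of $\Gamma_{m\pm1}$ each endpoint belongs to.

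The paper's proof is built precisely around this obstruction: pick a point $b\in\mathrm{Int}(e)$, move all crossings of label $i<m-1$ into $e[b,w_2]$ and all crossings of label $i>m+1$ into $e[w_1,b]$, and only then push each group off its own end --- the high labels past $w_1$ (where the incident labels are $m-1$ and $m$, both differing from $i\ge m+2$ by at least $2$) and the low labels past $w_2$ (incident labels $m$ and $m+1$, differing from $i\le m-2$ by at least $3$). This two-sided sorting also dissolves what you identify as the main obstacle: the only crossings that ever need to exchange positions on $e$ are one from each group, and their labels differ by at least $4$, so C-I-R2 always applies; crossings within the same group all travel in the same direction and never need to pass one another, so no ordering by $|h-m|$ or rerouting argument is required.
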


\begin{proof}
 Let $b$ be a point in Int$(e)$.
Since Int$(e)\cap(\Gamma_{m-1}\cup\Gamma_{m+1})=\emptyset$,
all the crossings of $\Gamma_i$ ($i<m-1$) in the edge $e$ can be moved into the arc $e[b,w_2]$
by C-I-R2 moves and C-I-R3 moves.
Similarly all the crossings of $\Gamma_i$ ($i>m+1$) in the edge $e$ can be moved into 
the arc $e[w_1,b]$ by C-moves.

Since the white vertex $w_1$ is in $\Gamma_{m}\cap\Gamma_{m-1}$,
we can push each crossing in the arc $e[w_1,b]$ to the side of $w_1$ along the edge $e$ by C-I-R2 moves and C-I-R4 moves.
Similarly we can push each crossing in the arc $e[b,w_2]$ to the side of $w_2$ along the edge $e$ by C-moves.
Hence
 the edge $e$ does not contain any crossings.
\end{proof}

The following lemma will be used in Step~$1$ and Step~$8$ of the proof of 
Theorem~\ref{NoLoop}.


\begin{lemma}$($Triangle Lemma$)$
\label{LemmaTriangle}
\begin{enumerate}
\item[$(1)$]
For a chart $\Gamma$,
if there exists  a $3$-angled disk $D_1$ of $\Gamma_m$ without feelers in a disk $D$ as shown in Fig.~\ref{fig19}{\rm (a)} and if $w(\Gamma\cap${\rm Int}$D_1)=0$,
then there exists a chart obtained from $\Gamma$ by C-moves in $D$ which contains the pseudo chart in $D$ as shown in Fig.~\ref{fig19}{\rm (b)}. 
\item[$(2)$] For a minimal chart $\Gamma$, 
if there exists a $3$-angled disk $D_1$ of $\Gamma_m$ without feelers in a disk $D$ as shown in Fig.~\ref{fig19}{\rm (c)},
then $w(\Gamma\cap${\rm Int}$D_1)\ge1$.
\end{enumerate}
\end{lemma}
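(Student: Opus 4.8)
The plan is to prove the two statements separately: statement (1) by an explicit $C$-move normalization of a neighborhood of $D_1$, and statement (2) by contradiction using statement (1)'s normalization together with minimality.

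For statement (1) I would first empty the interior of the $3$-angled disk $D_1$. Since $w(\Gamma\cap\mathrm{Int}\,D_1)=0$ and $D_1$ has no feelers, every arc of $\Gamma$ meeting $\mathrm{Int}\,D_1$ that also meets $\partial D_1$ must have a label $k$ with $|k-m|\ge 2$: otherwise it would either cross an edge of label $m$ of $\partial D_1$ (violating condition (iv) for crossings) or be an $m$-edge entering $D_1$, i.e.\ a feeler. Because there is no white vertex inside $D_1$, each such arc (and any crossing it carries), as well as any closed edge lying in $\mathrm{Int}\,D_1$, can be pushed out of $\mathrm{Int}\,D_1$ into $D-D_1$ by C-I-R2, C-I-R3 and C-I-R4 moves; in addition I would apply Lemma~\ref{MM+2Edge}, which holds for an arbitrary chart, to the edges of $\partial D_1$ whose two white vertices lie one in $\Gamma_{m-1}$ and one in $\Gamma_{m+1}$, so as to remove crossings from $\partial D_1$ as well. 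After this reduction the only data near $D_1$ are the three white vertices, the three edges of label $m$ forming $\partial D_1$, and the short $m\pm 1$ arcs issuing from those white vertices.

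Next comes the local analysis at each white vertex $w_j\in\partial D_1$. By condition (iii), the six arcs at $w_j$ alternate in label between $m$ and $m+\eta_j$ for some $\eta_j\in\{+1,-1\}$, with three consecutive arcs inward, three outward, and exactly one middle arc of each label. The configuration in Fig.~\ref{fig19}{\rm (a)} fixes, at each $w_j$, the orientations of the two $m$-edges of $\partial D_1$ and the side on which the $D_1$-wedge lies; feeding this into condition (iii) pins down which adjacent arc is the middle $m$-arc and the positions and orientations of the $m\pm1$ arcs, leaving only finitely many cases. In each surviving case the $m\pm1$ arcs at $w_1,w_2,w_3$ can be completed, by C-II moves absorbing short terminal edges and by C-III moves, into precisely the pseudo chart of Fig.~\ref{fig19}{\rm (b)}; the coherence of orientations forced by IO-Calculation with respect to $\Gamma_m$ on $D_1$ is what ensures no case is obstructed. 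For statement (2) I would argue by contradiction: assuming $w(\Gamma\cap\mathrm{Int}\,D_1)=0$, the disk $D_1$ is (vacuously) a special $3$-angled disk of $\Gamma_m$, so Lemma~\ref{Theorem3AngledDisk} applies and $D_1$ contains a pseudo chart of the RO-family of one of the two pseudo charts of Fig.~\ref{fig08}; performing the interior cleanup above inside the configuration of Fig.~\ref{fig19}{\rm (c)} and comparing the resulting orientations and middle-arc positions on $\partial D_1$ with those two forms shows the data of Fig.~\ref{fig19}{\rm (c)} cannot be matched to either without producing a lens (contradicting Lemma~\ref{PolandLemma}) or a complexity-reducing $C$-move (contradicting minimality). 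Hence $w(\Gamma\cap\mathrm{Int}\,D_1)\ge1$.

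I expect the main obstacle to be the case analysis in statement (1): checking that, once $\mathrm{Int}\,D_1$ is emptied, every assignment of orientations and middle arcs at $w_1,w_2,w_3$ consistent with Fig.~\ref{fig19}{\rm (a)} can be brought to the single normal form of Fig.~\ref{fig19}{\rm (b)} by $C$-moves supported in $D$, and that no such assignment forces a picture excluded by conditions (iii) and (iv). The cleanup itself is routine precisely because $\mathrm{Int}\,D_1$ contains no white vertex (the same mechanism that makes the Disk Lemma work), so the delicate bookkeeping is entirely in tracking the two middle arcs at each of the three white vertices.
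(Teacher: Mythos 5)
Your proof of Statement (1) follows essentially the paper's route: empty $\mathrm{Int}D_1$ of crossings and arcs (the paper does this in three Claims, using Assumptions~\ref{NoSimpleHoop} and~\ref{AssumptionRing} to rule out free edges, rings and hoops in $D_1$, Lemma~\ref{MM+2Edge} to clear $\partial D_1$ of crossings, and then the Disk Lemma~\ref{DiskLemma} applied to two of the boundary edges to make $\Gamma$ $(D_1,e_2)$- and $(D_1,e_3)$-arc free), and then deform to the normal form by explicit C-moves. Two remarks. First, your cleanup step appeals to pushing out closed edges of $\mathrm{Int}D_1$ by C-I-R moves; the paper instead kills them outright via Assumption~\ref{AssumptionRing}, which is only guaranteed for minimal charts, while Statement~(1) is asserted for an arbitrary chart -- either way this needs a sentence, but it is minor. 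Second, and more importantly, Fig.~\ref{fig19}(a) is a single, fully specified pseudo chart (orientations, labels, and the two interior edges $e,e'$ are all prescribed), so there is no residual case analysis over middle-arc assignments at $w_1,w_2,w_3$: the paper performs one explicit deformation (Fig.~\ref{fig20}). Your proposed enumeration of ``finitely many cases,'' each claimed to normalize to Fig.~\ref{fig19}(b), is both unnecessary and unsubstantiated -- and as stated it is dubious, since different interior configurations of the $m\pm1$ terminal edges would generally lead to different normal forms.

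The genuine gap is in Statement (2). The paper's proof is: assume $w(\Gamma\cap\mathrm{Int}D_1)=0$, apply Statement (1) to put a neighborhood of $D_1$ into the normal form of Fig.~\ref{fig19}(b), and then exhibit an explicit sequence of C-moves (Fig.~\ref{fig21}) that strictly decreases the number of white vertices, contradicting minimality. That explicit white-vertex--reducing sequence is the entire content of Statement (2), and your proposal does not supply it or any substitute for it. Your alternative -- invoking Lemma~\ref{Theorem3AngledDisk} -- has two problems. First, that lemma requires $\Gamma$ to be locally minimal with respect to $D_1$, a hypothesis you neither have nor arrange; arranging it by C-moves in a neighborhood of $D_1$ would require checking that the ambient data of Fig.~\ref{fig19}(c) in $D-D_1$ survives. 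Second, the conclusion you draw from it -- that the boundary data of Fig.~\ref{fig19}(c) ``cannot be matched'' to the RO-families of Fig.~\ref{fig08} ``without producing a lens or a complexity-reducing C-move'' -- is precisely the assertion to be proved, and no argument is given for it; it is not at all clear that the configuration of Fig.~\ref{fig19}(c) is incompatible with Fig.~\ref{fig08}, and the paper's contradiction does not come from such an incompatibility but from the concrete reducing move. As written, Statement (2) is not proved.
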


\begin{figure}[t]
\centerline{\includegraphics{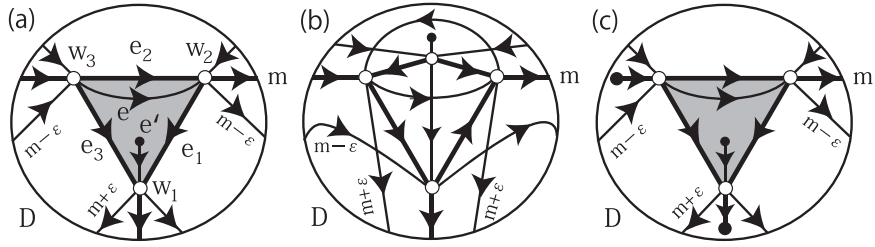}}
\vspace{5mm}
\caption{\label{fig19}The gray regions are 3-angled disks $D_1$. 
The thick lines are edges of label $m$,
and $\varepsilon\in\{+1,-1\}$.}
\end{figure}

\begin{proof}
We show Statement (1). 
We use the notations as shown in Fig.~\ref{fig19}(a).

{\bf Claim 1.} $D_1\cap(\Gamma_{m-\varepsilon}\cup\Gamma_{m}\cup\Gamma_{m+\varepsilon})= e\cup e'\cup\partial D_1$. 

For, if $D_1$ contains a ring or a hoop of label $m-\varepsilon,m,m+\varepsilon$, say $C$,
then $C$ bounds a disk in $D_1$ not containing any white vertices. 
This contradicts Assumption~\ref{AssumptionRing}.
Hence the disk $D_1$ contains neither hoop nor ring of label $m-\varepsilon,m,m+\varepsilon$. 
Since 
the disk $D_1$ does not contain any free edges by Assumption~\ref{NoSimpleHoop}, 
 Claim 1 holds.

{\bf Claim 2.} We can assume that neither $e_1$ nor $e$  contains a crossing by C-moves.

For,
$w_1\in\Gamma_{m+\varepsilon}$
and $w_2\in\Gamma_{m-\varepsilon}$.
Thus by Lemma~\ref{MM+2Edge}
we can assume that 
the edge $e_1$ of $\Gamma_{m}$
does not contain any crossings.
Moreover
we can push the crossings in $e$ to the side of $w_3$ by C-moves as follows.
Since the label of the edge $e$ is $m-\varepsilon$,
we have Int$(e)\cap(\Gamma_{m-2\varepsilon}\cup\Gamma_{m})=\emptyset$.
Hence Claim 1 implies that
we have Int$(e)\cap(\Gamma_{m-2\varepsilon}\cup\Gamma_{m}\cup\Gamma_{m+\varepsilon})=\emptyset$.
Since the white vertex $w_3$ is in $\Gamma_{m-\varepsilon}\cap\Gamma_{m}$,
we can push each crossing in the edge $e$ to the side of $w_3$ along $e$ by C-I-R2 moves and C-I-R4 moves.
Hence Claim 2 holds.

{\bf Claim 3.}  
We can assume that $D_1\cap\Gamma= e\cup e'\cup\partial D_1$
by C-moves.

For, Claim 2 assures that
by applying Disk Lemma(Lemma~\ref{DiskLemma}) twice,
we can assume that $\Gamma$ is $(D_1,e_2)$-arc free and $(D_1,e_3)$-arc free.
Since $D_1$ does not contain free edges, hoops nor rings
by Assumption~\ref{NoSimpleHoop} and Assumption~\ref{AssumptionRing},
we have $D_1\cap\Gamma= e\cup e'\cup\partial D_1$.
Hence Claim 3 holds.

As shown in Fig.~\ref{fig20},
we can deform by C-moves in $D$ from $\Gamma$ to a chart containing the pseudo chart as shown in Fig.~\ref{fig19}(b).

We show Statement (2).
Suppose that a minimal chart $\Gamma$ contains the pseudo chart as shown in Fig.~\ref{fig19}(c).
If $w(\Gamma\cap$Int$D_1)=0$,
then as shown in Fig.~\ref{fig21},
by the help of Triangle Lemma(Lemma~\ref{LemmaTriangle}(1)),
we can reduce the number of white vertices of $\Gamma$ by C-moves.
This contradicts the fact that $\Gamma$ is a minimal chart.
Hence $w(\Gamma\cap$Int$D_1)\ge1$.
\end{proof}

\begin{figure}[t]
\centerline{\includegraphics{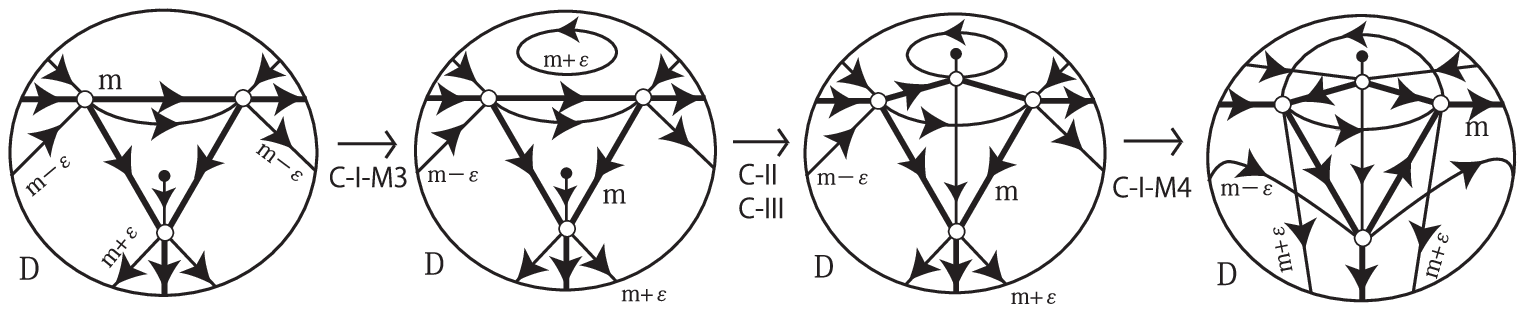}}
\vspace{5mm}
\caption{\label{fig20}The thick lines are edges of label $m$,
and $\varepsilon\in\{+1,-1\}$.}
\end{figure}

\begin{figure}[t]
\centerline{\includegraphics{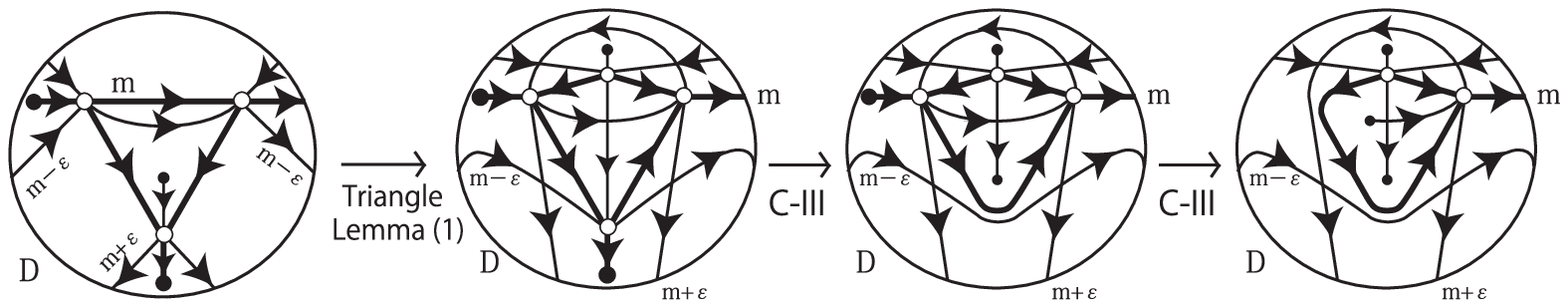}}
\vspace{5mm}
\caption{\label{fig21}The thick lines are edges of label $m$,
and $\varepsilon\in\{+1,-1\}$.}
\end{figure}

\section{{\large Proof of Theorem~1.1}}
\label{s:ProofTheorem}

Now we start to prove Theorem~\ref{NoLoop}.
We show the theorem by contradiction.
Suppose that there exists a minimal chart $\Gamma$ with $w(\Gamma)=7$
containing a loop $\ell$ of label $m$.
Let $D_1$ be the associated disk of $\ell$.
Then $w(\Gamma_m\cap(S^2-D_1))\le2$ by Lemma~\ref{ThreeVertexExt}.
Thus the loop $\ell$ is contained in a connected component $G$ of $\Gamma_m$ with $w(G)\le3$. By Lemma~\ref{TwoThreeLoop},
the graph $G$ is a pair of eyeglasses or a pair of skew eyeglasses.
By Proposition~\ref{NoGlasses} 
and Proposition~\ref{NoSkewGlassesType2},
the graph $G$ is a pair of skew eyeglasses of type $1$.

From now on,
throughout this section,
 we assume that
\begin{enumerate}
\item[(i)] $\ell$ is the loop of label $m$ in the pair of skew eyeglasses $G$ of type $1$, 
\item[(ii)] $w_1$ is the white vertex in  $\ell$ with $w_1\in\Gamma_m\cap\Gamma_{m+\varepsilon}$ where $\varepsilon\in\{+1,-1\}$, 
\item[(iii)] $\Gamma$ is locally minimal with respect to $D_1$ and $D_2$
where $D_1$ is the associated disk of $\ell$ and $D_2$ is the $2$-angled disk of $\Gamma_m$ with $w_1\not\in D_2$
(see Fig.~\ref{fig22}).
\end{enumerate}

\begin{figure}[t]
\centerline{\includegraphics{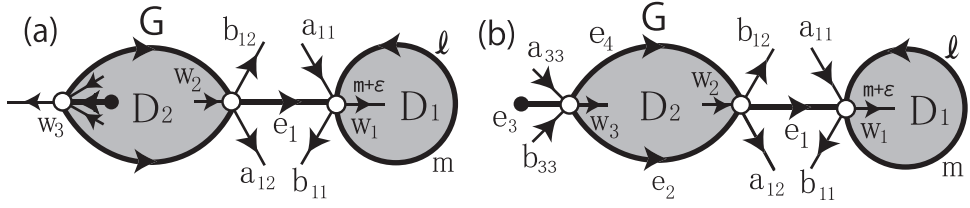}}
\vspace{5mm}
\caption{\label{fig22}
The pair of skew eyeglasses $G$ is the thick lines of label $m$,
 the gray regions are the disks $D_1,D_2$.}
\end{figure}

We shall prove Theorem~\ref{NoLoop} by 
eight steps:
If necessary
we change all orientation of edges,
we can assume
that the edge $e_1$ of label $m$ is oriented from $w_2$ to $w_1$. 
We show only the case that
$\ell$ is oriented clockwisely. 
We use the notations as shown in Fig.~\ref{fig22}.

{\bf Step 1.} The disk $D_2$ is a $2$-angled disk without feelers (i.e. Fig.~\ref{fig22}(b)).

{\bf Step 2.} There exists at least one white vertex of $\Gamma_{m-1}\cup\Gamma_{m+1}$ in $S^2-(D_1\cup D_2)$, say $w_7$.

{\bf Step 3.} 
$w(\Gamma\cap$Int$D_1)=2$.

{\bf Step 4.} There exists a white vertex in $S^2-(D_1\cup D_2)$ different from $w_7$, say $w_6$, and $w_6,w_7\in\Gamma_{m+\varepsilon}\cap\Gamma_{m+2\varepsilon}$ and
$w_2,w_3\in\Gamma_{m+\varepsilon}$.

{\bf Step 5.}
The edge $a_{12}$ contains one of the white vertices $w_6,w_7$.

Without loss of generality
we can assume $w_6\in a_{12}$.

{\bf Step 6.}
The edge $b_{33}$ contains the white vertex $w_7$.

{\bf Step 7.}
The edge $a_{33}$ contains the white vertex $w_7$.

{\bf Step 8.}
There does not exist a pair of skew eyeglasses of type $1$.\\

{\it Proof of Step~$1$.} Suppose that 
$D_2$ is a $2$-angled disk with one feeler.
We use the notations as shown in Fig.~\ref{fig22}(a).

We shall show that $\Gamma$ contains a pseudo chart as shown in Fig.~\ref{fig23}.
By IO-Calculation with respect to $\Gamma_{m\pm1}$ in $D_2$ and $Cl(S^2-(D_1\cup D_2))$,
there exists a white vertex of $\Gamma_{m-1}\cup\Gamma_{m+1}$ in Int$D_2$, say $w_6$, and 
there exists a white vertex of $\Gamma_{m-1}\cup\Gamma_{m+1}$ in $S^2-(D_1\cup D_2)$, say $w_7$.
Now 
\begin{enumerate}
\item[(1)]
$\begin{array}{rl}
7= & w(\Gamma)\\
 = & w(\Gamma\cap{\rm Int}D_1)+w(\Gamma\cap{\rm Int}D_2)+w(G)+w(\Gamma\cap(S^2-(D_1\cup D_2)))\\
\ge & w(\Gamma\cap{\rm Int}D_1)+1+3+1.
\end{array}$
\end{enumerate}
Hence we have $w(\Gamma\cap$Int$D_1)\le 2$.
By Lemma~\ref{LoopTwoVertices}(1),
we have $w(\Gamma\cap$Int$D_1)= 2$.
Thus by (1) 
\begin{enumerate}
\item[(2)]$w(\Gamma\cap$Int$D_2)=1$ and $w(\Gamma\cap(S^2-(D_1\cup D_2)))=1$.
\end{enumerate}
Since $D_2$ is a $2$-angled disk with one feeler, 
by (2) and Lemma~\ref{Theorem2AngledDisk}
 the disk $D_2$ contains a pseudo chart of the RO-family of the pseudo chart as shown in Fig.~\ref{fig07}(c) (see $D_2$ in Fig.~\ref{fig23}).

For the edge $b_{11}$ in Fig.~\ref{fig22}(a),
there are two cases:
$b_{11}=a_{11}$ or $b_{11}\ni w_7$.
But if $b_{11}=a_{11}$,
then 
we have a solar eclipse $b_{11}\cup\ell$.
This contradicts Proposition~\ref{NoSolarEclipse}.
Hence $b_{11}\ni w_7$.

For the edge $a_{12}$ in Fig.~\ref{fig22}(a),
there are two cases:
 $a_{12}=a_{11}$ or $a_{12}\ni w_7$.
If $a_{12}=a_{11}$,
then
the edges $e_1$ and $a_{12}$ separate the annulus $Cl(S^2-(D_1\cup D_2))$ into two regions.
Let $F$ be one of the two regions with $w_7\not\in F$.
By (2), we have $w(\Gamma\cap F)=0$.
Thus we have a contradiction by IO-Calculation with respect to $\Gamma_{m+\varepsilon}$ in $F$. Hence $a_{12}\ni w_7$.

Since $a_{12}$ and $b_{11}$ are oriented inward at $w_7$,
we have $b_{12}\not\ni w_7$.
Thus we have $b_{12}=a_{11}$,
because $b_{12}$ does not contain a middle arc at $w_2$.
Let $e_7$ be the edge of $\Gamma_{m+\varepsilon}$ containing $w_7$ different from $a_{12}$ and $b_{11}$. 
Then (2) implies that $e_7$ is a terminal edge.

We shall show $w_7\in\Gamma_{m+2\varepsilon}$.
Since $w_7\in b_{11}\subset\Gamma_{m+\varepsilon}$,
we have $w_7\in\Gamma_{m}$ or $w_7\in\Gamma_{m+2\varepsilon}$.
If $w_7\in\Gamma_{m}$,
then there exists a connected component of $\Gamma_m$ containing exactly one white vertex $w_7$.
This contradicts Lemma~\ref{ComponentTwoWhite}.
Thus $w_7\in\Gamma_{m+2\varepsilon}$. 

Let $D_3$ be the $3$-angled disk bounded by $a_{11}\cup b_{11}\cup a_{12}$
with $e_1\subset D_3$,
If $e_7\subset D_3$,
then we have a contradiction by IO-Calculation with respect to $\Gamma_{m+2\varepsilon}$ in $D_3$.
Thus we have $e_7\not\subset D_3$ 
(see Fig.~\ref{fig23}).
Since $w_7\in S^2-(D_1\cup D_2)$,
(2) implies that
\begin{enumerate}
\item[(3)]
$w(\Gamma\cap$Int$D_3)=0$.
\end{enumerate}
Thus the edge of $\Gamma_{m+2\varepsilon}$ containing $w_7$ in $D_3$ is a terminal edge.
Therefore 
$\Gamma$ contains a pseudo chart as shown in Fig.~\ref{fig23}.

However in a neighborhood of $D_3$,
the chart $\Gamma$ contains the pseudo chart as shown in Fig.~\ref{fig19}(c).
Now (3) contradicts Triangle Lemma(Lemma~\ref{LemmaTriangle}(2)).
Hence $D_2$ is a $2$-angled disk without feelers (see Fig.~\ref{fig22}(b)).
This completes the proof of Step~$1$.
\hfill$\square$\\

\begin{figure}[t]
\centerline{\includegraphics{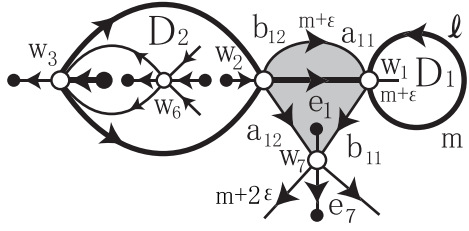}}
\vspace{5mm}
\caption{\label{fig23}The gray region is a $3$-angled disk $D_3$.}
\end{figure}


{\it Proof of Step~$2$.} 
By Step~$1$,
the chart $\Gamma$ contains the pseudo chart as shown in Fig.~\ref{fig22}(b).
We use the notations as shown 
in Fig.~\ref{fig22}(b).
Suppose that there does not exist any white vertices of $\Gamma_{m-1}\cup \Gamma_{m+1}$ 
in $S^2-(D_1\cup D_2)$.
By Assumption~\ref{NoTerminal},
the edge $b_{11}$ is not a terminal edge.
Thus
there are three cases:
(1) $b_{11}=a_{11}$, (2) $b_{11}=a_{33}$, (3) $b_{11}=b_{33}$.

{\bf Case (1).}
We have a solar eclipse $b_{11}\cup \ell$.
This contradicts Proposition~\ref{NoSolarEclipse}.

{\bf Case (2).}
We have $b_{33}=a_{12}$.
Hence the union $b_{33}\cup e_2$ bounds a lens.
This contradicts Lemma~\ref{NoLens}.

{\bf Case (3).} 
We have $w_1,w_3\in\Gamma_{m+\varepsilon}$, 
because $\partial b_{11}=\{w_1,w_3\}$.
By Assumption~\ref{NoTerminal},
the edge $a_{12}$ is not a terminal edge.
Hence $a_{12}=a_{11}$ or $a_{12}=a_{33}$.
Thus $a_{12}$ intersects the edge $b_{11}$.
Since $w_1\in a_{12}\cap b_{11}$ or $w_3\in a_{12}\cap b_{11}$,
the two edges $a_{12}$ and $b_{11}$
are of label $m+\varepsilon$.
This contradicts Condition (iv) of the definition of a chart.

Hence we have a contradiction for any cases.
Thus there exists a white vertex of $\Gamma_{m-1}\cup \Gamma_{m+1}$ 
in $S^2-(D_1\cup D_2)$.
This completes the proof of Step~$2$.
\hfill $\square$\\

{\it Proof of Step~$3$.}
By Step~$2$
\begin{enumerate}
\item[(*)]$\begin{array}{rl}
7=& w(\Gamma)\\
\ge& w(\Gamma\cap{\rm Int}D_1)+w(\Gamma\cap{\rm Int}D_1)+w(G)+w(\Gamma\cap(S^2-(D_1\cup D_2))\\
\ge& w(\Gamma\cap{\rm Int}D_1)+0+3+1.
\end{array}$
\end{enumerate}
Thus $w(\Gamma\cap$Int$D_1)\le 3$.
By Lemma~\ref{LoopTwoVertices}(1),
 we have $w(\Gamma\cap$Int$D_1)\ge2$.
Hence it suffices to prove $w(\Gamma\cap$Int$D_1)\not=3$.

Suppose $w(\Gamma\cap$Int$D_1)=3$.
By (*), Step~$2$ and $w(\Gamma)=7$,
we have
\begin{enumerate}
\item[(**)] $w(\Gamma\cap(S^2-(D_1\cup D_2))=1$
and $w(\Gamma\cap$Int$D_2)=0$.
\end{enumerate}
Let $w_7$ be the white vertex in $S^2-(D_1\cup D_2)$.
For the edge $b_{33}$ in Fig.~\ref{fig22}(b),
there are four cases:
(1) $b_{33}=a_{12}$,
(2) $b_{33}=b_{12}$,
(3) $b_{33}=b_{11}$, 
(4) $b_{33}\ni w_7$.

{\bf Case (1).}
The union $b_{33}\cup e_2$ bounds a lens. 
This contradicts Lemma~\ref{NoLens}.

{\bf Case (2).}
The edge $b_{33}$ separates the annulus $Cl(S^2-(D_1\cup D_2))$ into a disk and an annulus.
Let $F$ be one of the disk and the annulus
with $w_7\not\in F$.
Since $w(\Gamma\cap(S^2-(D_1\cup D_2))=1$ by (**),
we have $w(\Gamma\cap F)=0$.
Thus we have a contradiction by IO-Calculation with respect to $\Gamma_{m\pm1}$ in $F$.

{\bf Case (3).}
We have $w_1,w_3\in \Gamma_{m+\varepsilon}$.
For the edge $a_{12}$ in Fig.~\ref{fig22}(b),
the edge is of label $m-\varepsilon$ or $m+\varepsilon$.
If $a_{12}$ is of label ${m+\varepsilon}$,
then we have $a_{12}\ni w_7$ and $a_{33}=b_{12}$,
because neither $a_{12}$ nor $a_{33}$ is a terminal edge 
by Assumption~\ref{NoTerminal}.
Thus the union $a_{33}\cup e_4$ bounds a lens.
This contradicts Lemma~\ref{NoLens}.
Hence $a_{12}$ is of label ${m-\varepsilon}$.
Since $a_{12}$ is not a terminal edge,
 we have $w_7\in a_{12}\subset\Gamma_{m-\varepsilon}$.
Hence $w_7\in\Gamma_{m-2\varepsilon}$ or $w_7\in\Gamma_{m}$. If $w_7\in\Gamma_m$,
then there exists a connected component of $\Gamma_m$
containing exactly one white vertex $w_7$.
This contradicts Lemma~\ref{ComponentTwoWhite}.
Hence $w_7\in\Gamma_{m-2\varepsilon}$.

We shall show that
$\Gamma_{m-2\varepsilon}$ contains exactly one white vertex $w_7$.
Since $w(\Gamma\cap$Int$D_1)=3$, 
$w_1\in\Gamma_{m+\varepsilon}$,
and since $\Gamma$ is locally minimal with respect to $D_1$
by (iii),
all the white vertices in Int$D_1$
are contained in $\Gamma_{m+\varepsilon}\cup\Gamma_{m+2\varepsilon}$
by Lemma~\ref{CorThreeRed}.
Thus none of the white vertices in Int$D_1$
is contained in $\Gamma_{m-2\varepsilon}$.
Since $w_1,w_2,w_3\in\Gamma_m$,
none of $w_1,w_2,w_3$ 
is contained in $\Gamma_{m-2\varepsilon}$.
Hence $\Gamma_{m-2\varepsilon}$ contains exactly one white vertex $w_7$.
This contradicts Lemma~\ref{ComponentTwoWhite}.

{\bf Case (4).}
For the edge $b_{11}$ in Fig.~\ref{fig22}(b),
there are three cases: $b_{11}=a_{11}$, $b_{11}=a_{33}$ or $b_{11}\ni w_7$.
If $b_{11}=a_{11}$,
then we have a solar eclipse $b_{11}\cup \ell$.
This contradicts Proposition~\ref{NoSolarEclipse}.
If $b_{11}=a_{33}$ or $b_{11}\ni w_7$,
then we have $a_{12}\ni w_7$ (see Fig.~\ref{fig24}(a)).
Since $w(\Gamma\cap$Int$D_2)=0$ by (**),
the disk $D_2$
contains a pseudo chart of the RO-family of
the pseudo chart as shown in Fig.~\ref{fig07}(b)
by Lemma~\ref{Theorem2AngledDisk}.
Since $w(\Gamma\cap(S^2-(D_1\cup D_2))=1$ by (**),
we have $w(\Gamma\cap$Int$D_3)=0$, 
here $D_3$ is the disk with $\partial D_3=e_2\cup a_{12}\cup b_{33}$ and $w_1\not\in D_3$.
This contradicts Lemma~\ref{CorDiskLemma}.

Thus we have a contradiction for any cases.
Hence $w(\Gamma\cap$Int$D_1)\not=3$.
Therefore $w(\Gamma\cap$Int$D_1)=2$.
This completes the proof of Step~$3$.
\hfill $\square$\\

{\it Proof of Step~$4$.}
By Step~$3$,
we have $w(\Gamma\cap$Int$D_1)=2$.
Since $w(\Gamma)=7$,
we have 
\begin{enumerate}
\item[(1)] $w(\Gamma\cap(S^2-D_1))=4$.
\end{enumerate}
Since $w_2,w_3\in S^2-D_1$ and $w_7\in S^2-D_1$ by Step~$2$,
 there exists a white vertex in $S^2-D_1$ different from $w_2,w_3,w_7$, say $w_6$.
By Lemma~\ref{Table}(1)
we have $w(\Gamma_{m+2\varepsilon}\cap(S^2-D_1))\ge2$.
Since $w_2,w_3\in\Gamma_m$ (see Fig.~\ref{fig22}(b))
we have $w_2,w_3\not\in\Gamma_{m+2\varepsilon}$.
Hence we have 
\begin{enumerate}
\item[(2)]$w_6,w_7\in\Gamma_{m+2\varepsilon}$.
\end{enumerate}
Since $w_7\in\Gamma_{m-1}\cup\Gamma_{m+1}$ by Step~$2$,
we have $w_7\in\Gamma_{m+\varepsilon}\cap\Gamma_{m+2\varepsilon}$.

By Step~$2$ and (1),
we have 
\begin{center}
$\begin{array}{rl}
4=& w(\Gamma\cap(S^2-D_1))\\
= & w(\Gamma\cap{\rm Int}D_2)+w(\Gamma\cap\partial D_2)+w(\Gamma\cap(S^2-(D_1\cup D_2)))\\
\ge & w(\Gamma\cap{\rm Int}D_2)+2+1.
\end{array}$ 
\end{center}
Hence we have $w(\Gamma\cap$Int$D_2)\le1$.
Since the edges in $\partial D_2$ are oriented from $w_3$ to $w_2$,
we have $w(\Gamma\cap$Int$D_2)=0$
by Lemma~\ref{Theorem2AngledDisk} and Step~$1$.
Thus $w_6\in S^2-(D_1\cup D_2)$.

By (2),
we have $w_6\in\Gamma_{m+2\varepsilon}$.
Thus 
$w_6\in\Gamma_{m+\varepsilon}$ or
$w_6\in\Gamma_{m+3\varepsilon}$.
Suppose $w_6\in\Gamma_{m+3\varepsilon}$.
Since $w(\Gamma\cap$Int$D_1)=2$, 
by Lemma~\ref{LoopTwoVertices}(2)
the white vertices in Int$D_1$
are in $\Gamma_{m+\varepsilon}\cap\Gamma_{m+2\varepsilon}$.
Thus $\Gamma_{m+3\varepsilon}$ contains exactly one white vertex $w_6$.
This contradicts Lemma~\ref{ComponentTwoWhite}.
Hence $w_6\in\Gamma_{m+\varepsilon}$.
Thus $w_6\in\Gamma_{m+\varepsilon}\cap\Gamma_{m+2\varepsilon}$.

We can show $w_2,w_3\in\Gamma_{m-\varepsilon}$  or $w_2,w_3\in\Gamma_{m+\varepsilon}$ by IO-Calculation with respect to $\Gamma_{m-\varepsilon}$ in $Cl(S^2-(D_1\cup D_2))$.
If $w_2,w_3\in\Gamma_{m-\varepsilon}$, 
then there exist two lenses of type $(m,m-\varepsilon)$. 
This contradicts Lemma~\ref{NoLens}.
Thus $w_2,w_3\in\Gamma_{m+\varepsilon}$.
This completes the proof of Step~$4$.
\hfill $\square$\\

\begin{figure}[t]
\centerline{\includegraphics{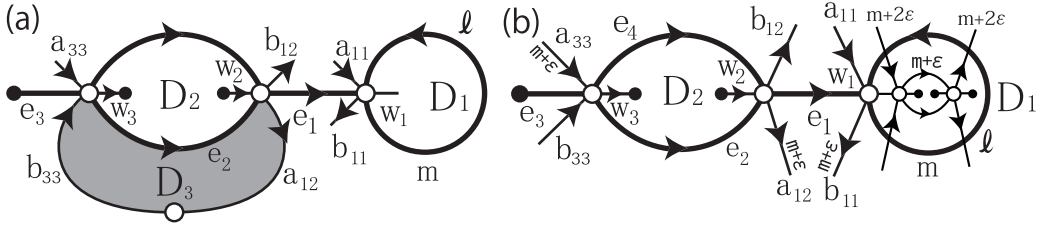}}
\vspace{5mm}
\caption{\label{fig24}The gray region is the disk $D_3$.}
\end{figure}

By Step~$3$, Step~$4$, Lemma~\ref{LoopTwoVertices}(2)
and
Lemma~\ref{Theorem2AngledDisk},
we have the pseudo chart as shown in 
Fig.~\ref{fig24}(b).
From now on,
we use the notations as shown in Fig.~\ref{fig24}(b).\\


{\it Proof of Step~$5$.} 
For the edge $a_{12}$ in Fig.~\ref{fig24}(b),
there are four cases:
(1) $a_{12}=a_{11}$,
(2) $a_{12}=a_{33}$,
(3) $a_{12}=b_{33}$, 
(4) $a_{12}$ contains one of $w_6,w_7$.

{\bf Case (1).}
The edges $a_{12},e_1$ separate the annulus $Cl(S^2-(D_1\cup D_2))$ into two regions.
Each region must contain a white vertex in its interior.
In one of the regions,
 there exists a loop of label $m+\varepsilon$ 
 whose associated disk does not contain any white vertices in its interior.
This contradicts Lemma~\ref{LoopTwoVertices}(1).

{\bf Case (2).}
The edges $a_{12}$ separates the annulus $Cl(S^2-(D_1\cup D_2))$ into two regions.
In a similar way to Case (1),
we can show that
 there exists a loop of label $m+\varepsilon$ 
 whose associated disk does not contain any white vertices in its interior.
This contradicts Lemma~\ref{LoopTwoVertices}(1).

{\bf Case (3).}
The union $a_{12}\cup e_2$ bounds a lens. 
This contradicts Lemma~\ref{NoLens}.

Thus Case (4) occurs.
This completes the proof of Step~$5$.
\hfill$\square$\\


{\it Proof of Step~$6$.}
For the edge $b_{33}$ in Fig.~\ref{fig24}(b),
there are four cases:
(1) $b_{33}=b_{11}$,
(2) $b_{33}=b_{12}$,
(3) $b_{33}\ni w_6$,
(4) $b_{33}\ni w_7$.

{\bf Case (1).}
The edges $b_{11},e_1$ separate the annulus $Cl(S^2-(D_1\cup D_2))$ into two regions.
Let $F$ be one of the two regions containing $w_6\in a_{12}$.
If $w_7\in F$,
then $a_{33}=b_{12}$.
Hence the union $a_{33}\cup e_4$ bounds a lens.
This contradicts Lemma~\ref{NoLens}.
If $w_7\not\in F$, then
there exists a loop of label $m+\varepsilon$ in $F$ whose associated disk does not contain any white vertices in its interior.
This contradicts Lemma~\ref{LoopTwoVertices}(1).

{\bf Case (2).}
The edges $b_{11}$ separate the annulus $Cl(S^2-(D_1\cup D_2))$ into two regions.
Let $F$ be one of the two regions containing $a_{33}$.
Then $F$ must contain $w_7$.
Thus there exists a loop of label $m+\varepsilon$ in $F$
whose associated disk does not contain any white vertices in its interior.
This contradicts Lemma~\ref{LoopTwoVertices}(1).

{\bf Case (3).}
By Lemma~\ref{CorDiskLemma}
the disk $D_3$ bounded by $a_{12}\cup b_{33}\cup e_2$ contains at least one white vertex in its interior as shown in Fig.~\ref{fig24}(a). 
Thus $w_7\in D_3$.
For the white vertex $w_7$,
there are two edges of $\Gamma_{m+\varepsilon}$ not containing middle arcs.
One contains $w_6$, and the other is a loop.
Hence there exists a loop of label $m+\varepsilon$
in $D_3$ whose associated disk does not contain any white vertices in its interior.
This contradicts Lemma~\ref{LoopTwoVertices}(1).

Hence Case (4) occurs.
This completes the proof of Step~$6$.
\hfill$\square$\\


{\it Proof of Step~$7$.}
For the edge $a_{33}$ in Fig.~\ref{fig24}(b),
there are four cases:
(1) $a_{33}=b_{12}$,
(2) $a_{33}=b_{11}$,
(3) $a_{33}\ni w_6$, and
(4) $a_{33}\ni w_7$.

{\bf Case (1).}
The union $a_{33}\cup e_4$ bounds a lens.
This contradicts Lemma~\ref{NoLens}.

{\bf Case (2).}
We have $b_{12}=a_{11}$.
Let $e_7$ be an edge of $\Gamma_{m+\varepsilon}$ with $w_7\in e_7$ different from $b_{33}$ 
not containing a middle arc at $w_7$.
By Assumption~\ref{NoTerminal},
the edge $e_7$ is not a terminal edge.
Thus $e_7$ is a loop or $w_6\in e_7$.
If $e_7$ is a loop,
then the associated disk of $e_7$ does not contain any white vertices in its interior.
This contradicts Lemma~\ref{LoopTwoVertices}(1).
Hence $w_6\in e_7$.

There are four edges of $\Gamma_{m+2\varepsilon}$ intersecting $\partial D_1$,
say $e_1',e_2',e_3',e_4'$.
Since there is no lens by Lemma~\ref{NoLens},
each of the four edges must contain one of $w_6,w_7$.
Thus there are two terminal edges of $\Gamma_{m+\varepsilon}$ in the disk $D_3$ containing $w_6,w_7$ respectively (see Fig.~\ref{fig25}(a)).
However it is impossible that
for each pair $1\le i<j\le 4$
with Int$(e_i')\cap$Int$(e_j')=\emptyset$.

{\bf Case (3).}
There exists an edge $e_7$ of $\Gamma_{m+\varepsilon}$ 
containing $w_7$ different from $b_{33}$ 
not containing a middle arc at $w_7$.
In a similar way to Case (2),
we can show $w_6\in e_7$.
Let $e_7'$ be the edge of $\Gamma_{m+\varepsilon}$ containing $w_7$ different from $b_{33}$ and $e_7$.
Then $e_7'$ is a terminal edge.
However we have a contradiction by IO-Calculation with respect to $\Gamma_{m+2\varepsilon}$ in the disk $D_3$ bounded by $a_{33}\cup b_{33}\cup e_7$ containing $e_3$ 
(see Fig.~\ref{fig25}(b) for the case $D_3\supset e_7'$).  

Hence Case (4) occurs.
This completes the proof of Step~$7$.
\hfill$\square$\\


\begin{figure}[t]
\centerline{\includegraphics{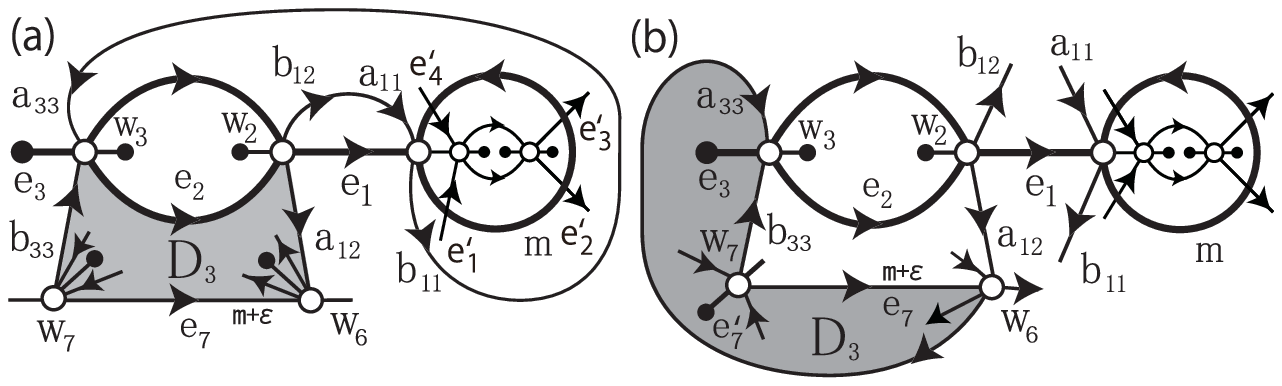}}
\vspace{5mm}
\caption{\label{fig25} The gray region is the disk $D_3$.}
\end{figure}

{\it Proof of Step~$8$.}
So far, we have a pseudo chart as shown in Fig.~\ref{fig26}(a).
For the edge $b_{11}$ in Fig.~\ref{fig26}(a),
there are three cases:
(1) $b_{11}=a_{11}$, 
(2) $b_{11}\ni w_7$, 
(3) $b_{11}\ni w_6$.

{\bf Case (1).}
We have a solar eclipse.
This contradicts Proposition~\ref{NoSolarEclipse}.

{\bf Case (2).}
There exists a loop of label $m+\varepsilon$ containing $w_6$ whose associated disk does not contain any white vertices 
in its interior.
This contradicts Lemma~\ref{LoopTwoVertices}(1).

{\bf Case (3).}
There are two cases: $b_{12}\ni w_7$ or $b_{12}=a_{11}$.
But if $b_{12}\ni w_7$,
then the union $b_{12}\cup a_{33}\cup e_4$ bounds a disk $D'$ with $w(\Gamma\cap$Int$D')=0$.
This contradicts Lemma~\ref{CorDiskLemma}.
Hence $b_{12}=a_{11}$.

Let $e_6$ be the edge of $\Gamma_{m+\varepsilon}$ containing $w_6$ different from $a_{12},b_{11}$.
Let $D_3$ be the disk bounded 
by $a_{12}\cup b_{12}\cup b_{11}$ 
with $e_1\subset D_3$.
Then 
\begin{enumerate}
\item[(*)]$w(\Gamma\cap$Int$D_3)=0$.
\end{enumerate}
If $e_6\subset D_3$,
then we have a contradiction by IO-Calculation with respect to $\Gamma_{m+2\varepsilon}$ in $D_3$.
Thus we have $e_6\not\subset D_3$
and there exists a terminal edge of $\Gamma_{m+2\varepsilon}$ containing $w_6$ in $D_3$.

Now $e_6$ is a terminal edge or $e_6\ni w_7$.
If $e_6$ is a terminal edge,
then in a neighborhood of $D_3$,
the chart $\Gamma$ contains the pseudo chart as shown in Fig.~\ref{fig19}(c).
By Triangle Lemma(Lemma~\ref{LemmaTriangle}(2)), we have $w(\Gamma\cap$Int$D_3)\ge1$.
This contradicts (*).
Hence $e_6$ is not a terminal edge.
Thus $e_6\ni w_7$  (see Fig.~\ref{fig26}(b)).

Let $e_1',e_2',e_3'$ be the edges of $\Gamma_{m+2\varepsilon}$ incident with the $2$-angled disk in $D_1$ in Fig.~\ref{fig26}(b).
For the edge $e_1'$, 
there are four cases:
(3-1) $e_1'=e_2'$,
(3-2) $e_1'=e_3'$,
(3-3) $e_1'=a_{66}$,
(3-4) $e_1'=b_{66}$.

{\bf Case (3-1).}
There exists a lens of type $(m+\varepsilon,m+2\varepsilon)$.
This contradicts Lemma~\ref{NoLens}.

Let $e'$ be the edge of $\Gamma_{m+\varepsilon}$ in $D_1$ containing $w_1$,
and $D_4$ the $2$-angled disk of $\Gamma_{m+\varepsilon}$ in $D_1$.

{\bf Case (3-2).}
The edge $e_1'$ separates the annulus $Cl(S^2-D_4)$ into two regions.
Let $F$ be the one of the two regions containing the edge $e_2'$.
Then $w(\Gamma\cap$Int$F)=0$.
Hence we have a contradiction by IO-Calculation with respect to $\Gamma_{m+2\varepsilon}$ in $F$.

{\bf Case (3-3).}
The simple arc $e_1'\cup e'$ separates the annulus 
$Cl(S^2-D_3)$ into two regions.
Let $F$ be the one of the two regions containing the edge $b_{66}$.
Then Int$F$ does not contain any white vertices except $w_3$ and $w_7$.
Hence
we have a contradiction by IO-Calculation with respect to $\Gamma_{m+2\varepsilon}$ in $F$.

Thus Case (3-4) occurs.
In a neighborhood of $D_3$,
the chart $\Gamma$ contains the pseudo chart as shown in Fig.~\ref{fig19}(a).
By Triangle Lemma(Lemma~\ref{LemmaTriangle}(1)),
we obtain a chart 
containing the pseudo chart as shown in Fig.~\ref{fig19}(b)
(see Fig.~\ref{fig26}(c)).
Apply a C-III move to the chart as shown in Fig.~\ref{fig26}(c),
we obtain a minimal chart as shown in Fig.~\ref{fig26}(d). 
However there exists a lens of type $(m+\varepsilon,m+2\varepsilon)$.
This contradicts Lemma~\ref{NoLens}.
Therefore 
there does not exist a pair of skew eyeglasses of type $1$.
This completes the proof of Step~$8$,
and the proof of Theorem~\ref{NoLoop}.
\hfill$\square$

\begin{figure}[t]
\centerline{\includegraphics{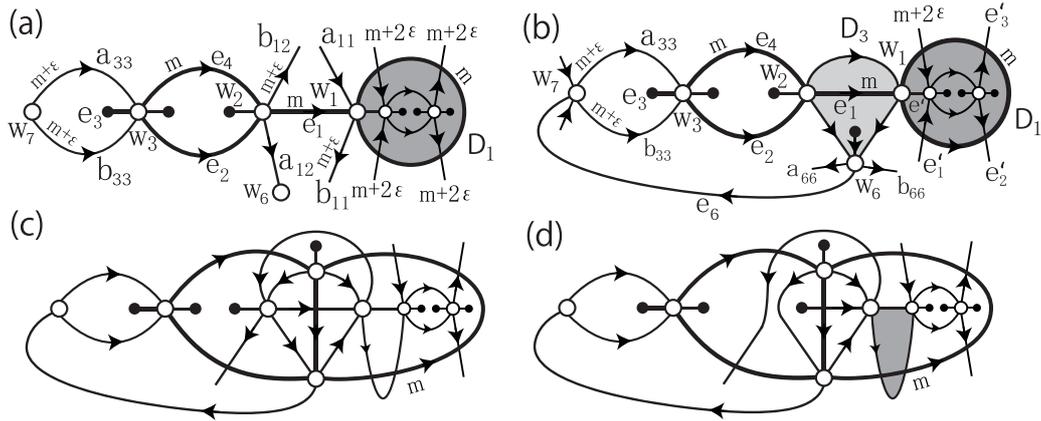}}
\vspace{5mm}
\caption{\label{fig26} The thick lines are edges of label $m$.
(a) The gray region is the disk $D_1$.
(b) The gray regions are the disks $D_1,D_3$.
(d) The gray region is a lens.}
\end{figure}



\vspace{5mm}

\begin{minipage}{65mm}
{Teruo NAGASE
\\
{\small Tokai University \\
4-1-1 Kitakaname, Hiratuka \\
Kanagawa, 259-1292 Japan\\
\\
nagase@keyaki.cc.u-tokai.ac.jp
}}
\end{minipage}
\begin{minipage}{65mm}
{Akiko SHIMA 
\\
{\small Department of Mathematics, 
\\
Tokai University
\\
4-1-1 Kitakaname, Hiratuka \\
Kanagawa, 259-1292 Japan\\
shima@keyaki.cc.u-tokai.ac.jp
}}
\end{minipage}

\vspace{5mm}

{\bf The List of words}\\
\\
{\small $
\begin{array}{ll||ll}
\text{$k$-angled disk of $\Gamma_m$} & p5 &
\text{pair of eyeglasses} & p19 \\

\text{associated disk of a loop} & p4 &
\text{pair of skew eyeglasses} & p19 \\

\text{C-move equivalent} & p3 &
\text{pseudo chart} & p2 \\

\text{C-move keeping $X$ fixed} & p5 &
\text{point at infinity $\infty$} & p4 \\

\text{$(D,\alpha)$-arc free} & p23 &
\text{ring} & p4 \\

\text{edge of $\Gamma_m$} & p3 &
\text{RO-family of a pseudo chart} & p5 \\

\text{feeler} & p6 &
\text{simple hoop} & p4 \\

\text{hoop} & p2, p4 &
\text{special $k$-angled disk} & p6 \\

\text{inward arc} & p9 &
\text{solar eclipse} & p16 \\

\text{IO-Calculation with respect to $\Gamma_k$} & p10 &
\text{terminal edge} & p3 \\

\text{lens} & p7 &
\text{$\Gamma_m$} & p2 \\

\text{locally minimal with respect to a disk} & p6  &
\text{$w(X)$} & p5 \\

\text{loop} & p1, p3 &
\text{$c(X)$} & p5 \\

\text{middle arc} & p2 &
\text{$\alpha[p,q]$} & p8 \\

\text{minimal chart} & p3 &
\text{$a_{ij}$} & p9 \\

\text{outward arc} & p9 &
\text{$b_{ij}$} & p9 \\

\end{array}
$}

\end{document}